\DeclareMathOperator{\rk}{rk}
\DeclareMathOperator{\rad}{rad}
\DeclareMathOperator{\Char}{char}
\def\F{{\mathbb F}}
\def\M{{\mathcal M}}
\def\SCS{{\mathbf S}}
\let\det\undefined
\DeclareMathOperator{\det}{det}
\DeclareMathOperator{\sgn}{sgn}
\newtcolorbox{mybox}{colback=red!5!white,colframe=red!75!black, sharp corners = all}
\def\ucsign{\bcpanchant}
\theoremstyle{plain}
\newtheorem{theorem}{Theorem}[section]
\newtheorem*{theorem*}{Theorem}
\newtheorem{lemma}[theorem]{Lemma}
\newtheorem{corollary}[theorem]{Corollary}
\theoremstyle{definition}
\newtheorem{definition}[theorem]{Definition}
\newtheorem{remark}[theorem]{Remark}
\newtheorem{note}[theorem]{Note}
\newtheorem{example}[theorem]{Example}
\numberwithin{equation}{section}
\journal{Linear Algebra and its Applications}
\begin{document}

\title{Linear maps preserving the Cullis'
determinant. I}
\author{Alexander Guterman}
\ead{alexander.guterman@biu.ac.il}
\author{Andrey Yurkov\corref{cor1}}
\ead{andrey.yurkov@biu.ac.il}
\cortext[cor1]{Corresponding author}

\affiliation{organization={Department of Mathematics, Bar Ilan University},addressline={Ramat-Gan},postcode={5290002}, country={Israel}}

\begin{keyword}
\MSC[2020] 15A15 \sep 15A86 \sep 47B49\\
Cullis' determinant \sep linear preservers \sep rectangular matrices    
\end{keyword}

\begin{abstract}
This paper is the first in the series of papers devoted to the explicit description of linear maps preserving the Cullis' determinant of rectangular matrices with entries belonging to an arbitrary ground field which is large enough.

The Cullis' determinant is defined for every matrix of size $n\times k$, where $n \ge k \ge 1$ and is equal to the ordinary determinant if $n = k$. In this paper we solve the linear preserver problem for the Cullis' determinant for $k \ge 4, n \ge k + 2$ and $n + k$ is even. It appears that in this case all linear maps preserving the Cullis' determinant are non-singular and could be represented by two-sided matrix multiplication.

We also obtain the complete description of linear maps preserving the Cullis' determinant of rectangular matrices for $k = 1$, $n\ge k$ and provide an example showing that for $k = 2, n \ge k + 2 = 4$ a linear map preserving $\det_{n\,2}$ may not correspond to the description of linear maps preserving $\det_{n\,k}$ for $k \ge 3$ obtained in this paper in its sequel.


Note that the cases where $n = k$ or $n = k + 1$ admit slightly different description allowing (sub)matrix transposition and were completely studied before: the case where $n = k$ is a classical linear preserver problem for the ordinary determinant and was solved by Frobenius; the complete characterisation for the case where $n = k + 1$ was obtained in the previous paper by the authors.

\end{abstract}

\maketitle


\section{Introduction}
The determinant of a matrix is a classical object of investigations in Linear Algebra and its applications. Usually  only the determinant of square matrices is  considered, but different attempts to generalize the notion of determinant to the set of rectangular matrices  have been done for a long time. Cullis introduced the concept of determinant (he called it \emph{determinoid}) of a rectangular matrix in his monograph~\cite{cullis1913} and it is presumably the first published generalization of the determinant to the rectangular case. Several properties known for the ordinary determinant are studied and  shown to be valid for the Cullis determinant in~\cite[\textsection 5, \textsection 27, \textsection 32]{cullis1913}, and we  recall some of them below. Algebraic characterization of the Cullis determinant can be found in \cite{amiri2010, makarewicz2014}.

In 1966 Radi\'{c}~\cite{radic1966} independently proposed a definition of the determinant of a rectangular matrix, which is equivalent to the Cullis definition, and since that, in some papers it is called \emph{Radi\'{c}'s determinant}~\cite{amiri2010} or \emph{Cullis-Radi\'{c} determinant}~\cite{makarewicz2014,makarewicz2020}. After that there were  several other generalizations of the determinant of a square matrix to rectangular matrices given, for example, in~\cite{pyle1962, sudhir2014, yanai2006}.

The notion of determinant has been studied in many contexts and one of them is the investigation of  linear maps preserving the determinant. The  first result in this direction dates back to 1897 and is due to Frobenius~\cite{GF}.

\begin{theorem}[Frobenius, {\cite[\textsection 7, Theorem I]{GF}}]\label{thm:1}
Let $S\colon \M_{n}(\mathbb C) \to \M_{n}(\mathbb C)$ be a bijective linear map satisfying $\det (S(X))=\det (X)$ for all $X \in \M_{n} (\mathbb C)$, where $\mathbb C$ denotes the field of complex numbers. Then there exist matrices $M, N \in \M_n(\mathbb C)$ with $\det (MN) = 1$ such that
$$S(X) = MXN\;\;\mbox{for all}\; X \in \M_{n} (\mathbb C)
\;\;\text{or}\;\;
S(X) = MX^{t}N\;\;\mbox{for all}\; X \in \M_{n}(\mathbb C).$$
\end{theorem}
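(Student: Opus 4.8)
The plan is to reduce the statement to the classification of $\mathbb C$-linear bijections of $\M_n(\mathbb C)$ that preserve the locus of singular matrices, and then to fix the scalar factors by evaluating at the identity. First, since the identity $\det(S(X))=\det(X)$ is \emph{exact}, a matrix $X$ is singular if and only if $S(X)$ is; together with bijectivity this means $S$ restricts to a bijection of $\mathcal Z:=\{X\in\M_n(\mathbb C):\det X=0\}$ onto itself. So it suffices to establish the classical (Dieudonné-type) fact that any $\mathbb C$-linear bijection $T\colon\M_n(\mathbb C)\to\M_n(\mathbb C)$ with $T(\mathcal Z)=\mathcal Z$ has the form $T(X)=AXB$ or $T(X)=AX^{t}B$ with $A,B$ invertible, and then apply it with $T=S$ (the case $n=1$ being trivial, we assume $n\ge2$).

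For the classical fact I would argue through the maximal linear subspaces of $\mathcal Z$. The crucial lemma (of Flanders/Dieudonné type) is that a linear subspace of $\M_n(\mathbb C)$ consisting of singular matrices has dimension at most $n^{2}-n$, and those of dimension exactly $n^{2}-n$ are precisely the column spaces $\mathcal C_v=\{X:Xv=0\}$, $0\neq v$, and the row spaces $\mathcal R_w=\{X:w^{t}X=0\}$, $0\neq w$. Granting this, $T$ permutes the collection $\{\mathcal C_v\}\cup\{\mathcal R_w\}$. Since $\dim(\mathcal C_v\cap\mathcal C_{v'})=\dim(\mathcal R_w\cap\mathcal R_{w'})=n^{2}-2n$ whereas $\dim(\mathcal C_v\cap\mathcal R_w)=(n-1)^{2}=n^{2}-2n+1$, and $T$ preserves dimensions of subspaces and of their intersections, the two sub-families are either both preserved or interchanged by $T$; replacing $T$ by $X\mapsto T(X^{t})$ in the second case, we may assume they are preserved. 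Then $v\mapsto v'$ defined by $T(\mathcal C_v)=\mathcal C_{v'}$, and likewise $w\mapsto w'$, are collineations of $\mathbb P(\mathbb C^{n})$; for $n\ge3$ the fundamental theorem of projective geometry presents them by semilinear maps, and $\mathbb C$-linearity of $T$ forces the accompanying field automorphism to be trivial (the case $n=2$ is a short direct check), so there are invertible $N,M$ with $T(\mathcal C_v)=\mathcal C_{N^{-1}v}$ and $T(\mathcal R_w)=\mathcal R_{M^{-t}w}$. One checks that $\widetilde T(X):=M^{-1}T(X)N^{-1}$ then fixes every $\mathcal C_v$ and every $\mathcal R_w$ setwise; such a linear map must act block-diagonally on rows and on columns simultaneously, hence be a nonzero scalar multiple of the identity, and absorbing that scalar into $M$ gives $T(X)=MXN$ (and $T(X)=MX^{t}N$ in the interchanged case).

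Finally, evaluating at $X=I$ yields $\det(S(I))=\det(I)=1$; since $S(I)=MN$ in the direct case and $S(I)=MI^{t}N=MN$ in the transposed case, we conclude $\det(MN)=1$, which completes the proof. I expect the main obstacle to be the maximal-subspace lemma together with the reconstruction step: proving that an $(n^{2}-n)$-dimensional space of singular matrices must be some $\mathcal C_v$ or $\mathcal R_w$ requires a careful rank argument (take a matrix of maximal rank in the space and perturb it to exceed rank $n-1$ unless the space has the claimed form), and translating the induced permutation of these subspaces back into a two-sided multiplication is where the projective-geometry bookkeeping lives; both steps use that the ground field is infinite (here $\mathbb C$) so that generic-rank and Zariski-density arguments go through. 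An alternative for the middle paragraph is to observe that $S$ already preserves the rank-one matrices --- $\rk E\le1$ iff $t\mapsto\det(A+tE)$ is affine for every $A$, a property manifestly preserved by $S$ --- and then invoke the classification of rank-one preservers, but this merely relocates the same projective-geometric core.
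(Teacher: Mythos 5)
The paper does not prove Theorem~\ref{thm:1} at all: it is quoted verbatim from Frobenius as classical background, with only the citation \cite{GF} standing in for a proof. So there is no in-paper argument to measure you against, and the relevant question is only whether your outline is sound. It is: you follow the classical Dieudonn\'e route --- the exact identity $\det(S(X))=\det(X)$ forces $S$ (and, since $S$ is bijective, $S^{-1}$) to preserve the singular locus, hence to permute the maximal linear subspaces of singular matrices; the lemma that these have dimension exactly $n^2-n$ and are precisely the $\mathcal C_v$ and $\mathcal R_w$ is correct over $\mathbb C$, your intersection-dimension count ($n^2-2n$ within a family versus $(n-1)^2$ across families) correctly separates the two families up to a global swap absorbed by transposition, and the reduction of the residual map fixing every $\mathcal C_v$ and $\mathcal R_w$ to a scalar is right (it acts on each rank-one $ab^t$ by a scalar $c(a,b)$, and linearity forces $c$ constant). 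The two places where real work is compressed into a clause are the maximal-subspace lemma itself and the claim that $\mathbb C$-linearity of $T$ kills the field automorphism coming out of the fundamental theorem of projective geometry (one has to check, e.g.\ along a pencil $X_0+\lambda X_1$ of rank-$(n-1)$ matrices with kernel $e_1+\lambda e_2$, that the automorphism agrees with a rational function and hence is the identity); you flag both honestly, and both are standard, so I would count this as a correct proof outline rather than a gapped one. One remark worth making: the ``alternative'' you dismiss in your last sentence --- detecting $\rk E\le 1$ by the affineness of $t\mapsto\det(A+tE)$ for all $A$ and then invoking a rank-one-preserver theorem --- is in fact precisely the strategy this paper adopts for its own main theorem on the Cullis determinant (Lemma~\ref{lem:Rank1CullisDeg1}, Lemma~\ref{lem:CullisDeg1Rank1} and Westwick's theorem, Lemma~\ref{lem:westwick}), where the hard direction of that degree criterion is exactly what occupies Section~\ref{sec:maintheorem}; so far from ``merely relocating the projective-geometric core'', that variant is the one that generalizes off the square case.
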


This result by Frobenius prompted the investigation of so-called linear preserver problems  concerning the characterization of linear operators on matrix spaces that leave certain functions, subsets, relations, etc., invariant. The research started by Frobenius was continued in the works by Schur, Dieudonn\'e, Dynkin and others. One may see~\cite{hogben_handbook_2014} for a brief introduction to the subject and~\cite{LAMA199233} for an extensive survey.

A question about generalization of the Frobenius' theorem to the Cullis' determinant on general rectangular matrices arises naturally. The corresponding linear preserver problem was not even partially solved until 2024. In a recent paper~\cite{Guterman2024}, the authors initiated research on this problem by considering the case of rectangular matrices of size $n\times k$ with $n = k + 1$.

Continuing the work in this direction, we found the solution of linear preserver problem of the Cullis' determinant for the matrices of all possible sizes $n \times k$ with $k \neq 1,2$ in the case where the ground field is large enough. Note that cases where $k = 1$ and $k = 2$ are special because if $k = 1$, then the Cullis' determinant is just a linear functional and if $k = 2$, then it is just a quadratic form where the standard methods of studying quadratic forms may be applied. We discuss these cases in detail in Section~\ref{sec:K1} and Section~\ref{sec:K2}, correspondingly.

Our result contains several cases which differs from each other by the approach and techniques applied. In this part we consider the case where $k \ge 4, n \ge k + 2$ and $n + k$ is even together with assumption that the size of the ground field is greater than $k$. The results are likely valid for every ground field but different methods are required. We also provide an example showing that already in the case where $k = 3$ another approach is needed (this case will be considered in the separate paper).


\vspace{\topsep}

Let us provide the basic notation used throughout this paper. 

By $\F$ we denote a field without any restrictions on its characteristic. We will use an inequality $|\F| > k$ for $k \in \mathbb N$ which means that either $\F$ is infinite or $\F$ is finite and $|\F| > k$.

We denote by $\M_{n\, k}(\F)$ the set of all $n\times k$ matrices with the entries from a certain field $\F.$ $O_{n\, k}\in \M_{n\, k}(\F)$ denotes the matrix with all the entries equal to zero. $I_{n\, n} = I_{n} \in \M_{n\, n}(\F)$ denotes an identity matrix. Let us denote by $E_{ij} \in \M_{n\, k}(\mathbb F)$ a matrix, whose entries are all equal to zero besides the entry on the intersection of the $i$-th row and the $j$-th column, which is equal to one. By $x_{i\, j}$ we denote the element of a matrix $X$ lying on the intersection of its $i$-th row and $j$-th column.  For integers $i, j$ we denote by $\delta_{i\,j}$  a \emph{Kronecker delta} of $i$ and $j$, which is equal to $1$ if $i = j$ and equal to $0$ otherwise.

For $A \in \M_{n\,k_1}(\F)$ and $B \in \M_{n\,k_2}(\F)$ by $A|B \in \M_{n\,(k_1+k_2)}(\F)$ we denote a block matrix defined by $A|B = \begin{pmatrix} A & B\end{pmatrix}$.

For $A \in \M_{n\,k}(\F)$ by $A^t \in \M_{k\,n}(\F)$ we denote a transpose of the matrix $A$, i.e. $A^{t}_{i\,j} = A_{j\,i}$ for all $1 \le i \le k$, $1 \le j \le n$.

We use the notation for submatrices following~\cite{Minc1984} and~\cite{Pierce1979}. That is, by $A[J_1|J_2]$ we denote the $|J_1| \times |J_2|$ submatrix of $A$ lying on the intersection of rows with the indices from $J_1$ and the columns with the indices from $J_2$. By $A(J_1|J_2)$ we denote a submatrix of $A$ derived from it by striking out from it the rows with indices belonging to $J_1$ and the columns with the indices belonging to $J_2$. If one of the two index sets is absent, then it means an empty set, i.e. $A(J_1|)$ denotes a matrix derived from $A$ by striking out from it the rows with indices belonging to $J_1$. We may skip curly brackets, i.e. $A[1,2|3,4] = A[\{1,2\}|\{3,4\}]$. The notation with mixed brackets is also used, i.e. $A(|1]$ denotes the first column of the matrix $A$. The above notations are also used for vectors as well. In this case vectors are considered as $n\times 1$ or $1 \times n$ matrices.

\vspace{\topsep}

Let us start by introducing the notion of Cullis determinant. We provide the basic definitions following~\cite{NAKAGAMI2007422}.

\vspace{\topsep}

\begin{definition}
By $[n]$ we denote the set $\{1, \ldots, n\}$.
\end{definition}
\begin{definition}
 By $\mathcal C_{X}^{k}$ we denote the set of injections from $[k]$ to $X$.
\end{definition}
\begin{definition}By $\binom{X}{k}$ we denote the set of the images of injections from $[k]$ to $X$, i.e. the set of all subsets of $X$ of cardinality $k.$
\end{definition}

\begin{definition}Let $c \in \binom{[n]}{k}$ be equal to $\{i_1,\ldots, i_k\}$, where $i_1 < i_2 < \ldots < i_k$, and $1 \le \alpha \le k$ be a natural number. Then $c(\alpha)$ is defined by
$$c(\alpha) = i_{\alpha}.$$
\end{definition}

\begin{definition}Given a set $c \in \binom{[n]}{k}$ we denote by $\sgn (c) = \sgn_{[n]} (c)$ the number
$$(-1)^{\sum_{\alpha = 1}^{k} (c(\alpha) - \alpha)}.$$
\end{definition}

\begin{note}$\sgn_{[n]}(c)$ depends only on $c$ and does not depend on $n.$
\end{note}

\begin{definition}Given an injection $\sigma \in \mathcal C_{[n]}^{k}$ we denote by $\sgn_{n\,k} (\sigma)$ the product $\sgn(\pi_\sigma) \cdot \sgn_{[n]} (c),$
where $\sgn(\pi)$ is the sign of the permutation
$$
\pi_\sigma =
\begin{pmatrix}
i_1 & \ldots & i_k\\
\sigma(1) & \ldots & \sigma(k)
\end{pmatrix},
$$
where $\{i_1, \ldots, i_k\} = \sigma([k])$ and $i_1<i_2<\ldots <i_k$.
\end{definition}

We omit the subscripts for $\sgn_{[n]}$ and $\sgn_{n\,k}$ if this cannot lead to a misunderstanding.

\begin{definition}[\cite{NAKAGAMI2007422}, Theorem 13]\label{def:CullisDet} Let  $n \ge k$, $X \in \M_{n\,k} (\mathbb F)$. Then Cullis' determinant $\det_{n\,k}(X)$ of $X$ is defined to be the function:
$$
\det_{n\,k} (X) = \sum_{\sigma \in \mathcal C_{k}^{[n]}} \sgn_{n\,k} (\sigma) x_{\sigma(1)\,1}x_{\sigma(2)\,2}\ldots x_{\sigma(k)\, k}.
$$
We also denote $\det_{n\,k} (X)$ as follows
\[
\det_{n\,k}(X)=\begin{vmatrix}x_{1\,1} & \cdots & x_{1\,k}\\
\vdots & \cdots & \vdots\\
x_{n\,1} & \cdots & x_{n\,k}
\end{vmatrix}_{n\,k}.
\]
If $n = k$, then we also write $\det_{k}$ or $\det$ instead of $\det_{n\,k}$ because in this case $\det_{n\,k}$ is clearly equal to an ordinary determinant of a square matrix.
\end{definition}

Now it is possible to formulate the main theorem of this paper.

\begin{theorem*}[Theorem~\ref{thm:MainTheoremEvenKGe4}]\label{MainTheoremEven}Assume that $|\F| > k \ge 4$, $n \ge k + 2$ and  $n + k$ is even. Let $T\colon \M_{n\,k} (\F) \to \M_{n\,k} (\F)$ be a linear map. Then $\det_{n\,k} (T(X)) = \det_{n\,k}(X)$ for all $X \in \M_{n\,k} (\F)$ if and only if there exist $A \in \M_{n\,n}(\F)$ and $B \in \M_{k\,k}(\F)$ such that
\[
\det_{n\,k} \Bigl(A(|i_1,\ldots, i_k]\Bigr) \cdot \det_k \Bigl(B\Bigr) = (-1)^{i_1 + \ldots + i_k - 1 - \ldots - k}
\]
for all increasing sequences $1 \le i_1 < \ldots < i_k \le n $ and
\begin{equation}\label{thm:MainTheoremEven:eq1}
T(X) = AXB
\end{equation}
for all $X \in \M_{n\,k} (\F).$
\end{theorem*}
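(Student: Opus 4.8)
The ``if'' direction is the easy one: given $A$ and $B$ with the stated normalization, one expands $\det_{n\,k}(AXB)$ by multilinearity. Writing $\det_{n\,k}(AXB) = \sum_{c \in \binom{[n]}{k}} \pm \det_k\bigl(A[\,|c]\bigr)\det_k\bigl((XB)[c|\,]\bigr)$ via a Cauchy--Binet-type expansion for the Cullis determinant (the sign being exactly $\sgn_{[n]}(c)$, as is visible from Definition~\ref{def:CullisDet}), and then $\det_k\bigl((XB)[c|\,]\bigr) = \det_k\bigl(X[c|\,]\bigr)\det_k(B)$, one gets $\det_{n\,k}(AXB) = \Bigl(\sum_c \sgn_{[n]}(c)\,\det_k\bigl(A[\,|c]\bigr)\det_k(B)\,\det_{n\,k}\text{-minor}\Bigr)$; the hypothesis on $A(|i_1,\dots,i_k]$ (note $A(|i_1,\dots,i_k] = A[\,|[n]\setminus\{i_1,\dots,i_k\}]$ up to reindexing, so there is a complementary-minor bookkeeping step, and the sign $(-1)^{i_1+\dots+i_k-1-\dots-k}$ is precisely $\sgn_{[n]}$ of the relevant set) forces every coefficient to match $\det_{n\,k}(X)$ term by term. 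So the real content is the converse.

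For the ``only if'' direction I plan the following. First, reduce to showing $T$ is invertible and that $T$ and its inverse both have a two-sided-multiplication form; the precise identification of $A,B$ then comes from plugging in $X = E_{ij}$ and reading off the resulting linear constraints, which reproduce the normalization on minors of $A$ and $\det_k(B)$. The heart of the argument is to exploit the \emph{singular locus} $\{X : \det_{n\,k}(X) = 0\}$: since $T$ preserves $\det_{n\,k}$ it preserves this hypersurface, and linear automorphisms preserving a hypersurface are strongly constrained when $|\F|$ is large enough (so that the polynomial identity can be checked on $\F$-points, and so that the relevant algebraic-geometry/polynomial-identity arguments go through — this is exactly why $|\F| > k$ is imposed). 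I would study low-rank matrices: the set of matrices of rank $\le k-1$ is contained in the zero locus, and I expect to show $T$ preserves rank (at least it preserves rank $\le k-1$, hence by a counting/dimension argument also sends rank-$k$ matrices to rank-$k$ matrices). Rank-$(k-1)$ (or rank-$1$) preservers on $\M_{n\,k}$ are classically classified — they are $X \mapsto AXB$ or, when the shape allows, a transpose-type map. The parity hypothesis ``$n+k$ even'' and $n \ge k+2$, $k\ge 4$ should be what rules out the transpose-type alternative and any exotic behaviour, leaving only $X \mapsto AXB$.

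More concretely, the chain of steps is: (1) establish a Cauchy--Binet expansion $\det_{n\,k}(X) = \sum_{c\in\binom{[n]}{k}} \sgn_{[n]}(c)\,\det_k(X[c|\,])$ and note the maximal minors $\det_k(X[c|\,])$ are algebraically independent coordinates on a space closely related to the Grassmannian; (2) using that $T$ preserves the zero set, show $T$ preserves the variety of rank-deficient matrices — here one uses that the singular locus of $\{\det_{n\,k} = 0\}$, or its decomposition into components/its tangent behaviour, detects rank, invoking $|\F| > k$ to have enough points; (3) deduce $T$ preserves rank globally, in particular $T$ is bijective and maps rank-$1$ matrices to rank-$1$ matrices; (4) apply the structure theorem for rank-$1$ preservers on $n\times k$ matrices to get $T(X) = AXB$ or $T(X) = A X^t B$ (the latter only formally, since shapes differ — it would actually be $X \mapsto A X^t B$ with $A\in\M_{n\,k}$-ish sizes, which is where ``$n\ge k+2$'' kills it, as $X^t$ has the wrong shape and no genuine transpose map exists, so only the first form survives); (5) substitute back into $\det_{n\,k}(T(X)) = \det_{n\,k}(X)$, expand by Cauchy--Binet, and match coefficients of the squarefree monomials in the entries of $X$ to extract exactly the normalization $\det_{n\,k}(A(|i_1,\dots,i_k])\cdot\det_k(B) = (-1)^{i_1+\dots+i_k-1-\dots-k}$.

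The main obstacle I anticipate is step (2)--(3): proving that a linear map preserving only the \emph{hypersurface} $\{\det_{n\,k}=0\}$ (equivalently, all its zeros) must preserve rank. For the square case this is Frobenius's classical fact, but for rectangular Cullis determinants the zero locus is not irreducible in the same way and one must understand its geometry — which components it has, how rank is read off from the local structure — and do so over a field that is merely ``large enough'' rather than algebraically closed. This is presumably where the parity condition ``$n+k$ even'' enters in an essential way (it likely controls whether $\det_{n\,k}$ is, up to sign conventions, genuinely a sum of products of maximal minors with a definite symmetry, versus the ``$n+k$ odd'' case where the behaviour differs and the authors defer it), and also where the restriction $k\ge 4$ is used to have enough room for the rank-preserver classification to force the clean $X\mapsto AXB$ conclusion without the $k=2,3$ exceptional phenomena the abstract mentions.
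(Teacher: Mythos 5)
Your ``if'' direction is essentially the paper's: a Cauchy--Binet-type expansion (Lemma~\ref{lem:DetNKOfMatrixProduct} together with Corollary~\ref{cor:rightmatrixmult} and Corollary~\ref{lem:DetNKAsSumProj}) reduces everything to the normalization on the minors of $A$ and on $\det_k(B)$, and steps (4)--(5) of your plan (Westwick's rank-one preserver theorem, then matching coefficients) also coincide with how the paper finishes. The problem is that the entire load-bearing part of the converse --- your steps (2)--(3) --- is a gap, and you have in fact named it yourself. The claim that a linear map preserving only the hypersurface $\{X : \det_{n\,k}(X)=0\}$ must preserve the rank stratification is nowhere substantiated: the rank-deficient locus has codimension $n-k+1\ge 3$ inside a codimension-one hypersurface, so the zero set does not ``see'' rank directly, and the proposed detour through the singular locus or irreducible components of $\{\det_{n\,k}=0\}$ would require understanding the geometry of this degree-$k$ hypersurface over a field that is merely large (not algebraically closed), which is not done. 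Moreover you give no argument that $T$ is invertible; you propose to deduce bijectivity \emph{from} rank preservation, but rank preservation is exactly what is missing, so the argument is circular at this point. (In the paper invertibility is established first and independently, via Waterhouse's radical criterion, Lemma~\ref{lem:RadicalCriterion}, together with the computation $\rad(\det_{n\,k})=\{0\}$ in Lemma~\ref{lem:NKEvenAdditionPreserverIsZero}; this is also where the parity hypothesis enters --- for $n+k$ odd the all-ones matrix lies in the radical and preservers need not be injective --- rather than through any symmetry of the minor expansion.)

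The paper replaces your hypersurface-geometry step by a different, concretely checkable invariant: for a fixed $B$ the quantity $\max_A \deg_\lambda\bigl(\det_{n\,k}(A+\lambda B)\bigr)$. That this is $\le 1$ when $\rk(B)\le 1$ is immediate (Lemma~\ref{lem:Rank1CullisDeg1}); that it is transported by any preserver follows once invertibility is known (Lemma~\ref{lem:DetNKPresPresLowDeg}, using $|\F|>k$ only to identify polynomials of degree $\le k$ in $\lambda$ with their coefficient lists, Corollary~\ref{DegFGLessEqual} --- not for any point-counting on varieties); and the hard converse, that degree $\le 1$ for all $A$ forces $\rk(B)\le 1$, is Lemma~\ref{lem:CullisDeg1Rank1}, whose proof occupies most of Section~\ref{sec:maintheorem} and requires constructing explicit test matrices (Lemmas~\ref{lem:Exists2DiffDiff}, \ref{lem:Exists2DiffSum}, \ref{lem:Exists2Sum}) to extract individual $2\times 2$ minors. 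This converse genuinely fails for $k=3$ (Example~\ref{cex:K3NKEven}), so whatever invariant you use must be calibrated carefully; your sketch contains no mechanism that would detect, let alone avoid, this failure. To repair your proposal you would need either a proof of your step (2)--(3) or a substitute invariant with a proof of both implications, plus an independent invertibility argument.
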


Our proof is based on the following observation: if $P$ is a polynomial function of matrix coefficients, then for the given matrices $A, B$ the expression $P(A + \lambda B)$ is a polynomial function of $\lambda,$ and for a certain $P$ it is possible to find the relationship between the value of $\max_A (\deg_\lambda (A + \lambda B))$ and various properties of $B.$ 

Thus, assuming that $P$ is equal to Cullis' determinant, for a given matrix $B$ we provide the relationship between the condition $\max_A (\deg_\lambda P(A + \lambda B)) \le 1$ and the condition $\rk(B) \le 1$ described in Lemma~\ref{lem:Rank1CullisDeg1} and Lemma~\ref{lem:CullisDeg1Rank1} below. Here we need to assume the ground field is large enough because we need to identify the polynomial with its coefficients.

We conclude from the obtained relationship that every linear map preserving the Cullis' determinant must preserve the set of matrices of rank one because in this case every linear map preserving the Cullis' determinant preserves the value $\max_A (\deg_\lambda P(A + \lambda B))$. The description of linear maps preserving matrices of rank one was obtained by Westwick~\cite{westwick1967}. His result requires the bijectivity of considered linear map. To establish that every considered linear map is bijective we use the notion of radical of a function and its properties which are introduced and studied by Waterhouse~\cite{Waterhouse1983}. 

Then~\cite[Theorem~3.5]{westwick1967} implies that, under our assumptions, every linear map preserving the Cullis' determinant has the form $X \mapsto AXB$. Finally, we provide the necessary and sufficient condition for a map of such form to be a linear Cullis' determinant preserver which is described in Lemma~\ref{lem:TwoSidedMulPreservesDet}.\bigskip

The paper is organized as follows: in Section~\ref{sec:prelim} we provide the basic facts regarding $\det_{n\,k}$ and the condition for a linear map of the form $X\mapsto AXB$ to be a linear map preserving the Cullis' determinant; in Section~\ref{sec:kernels} we prove that if $n + k$ is even, then every linear map preserving $\det_{n\,k}$ is invertible by means of finding the radical of $\det_{n\,k}$; Section~\ref{sec:maintheorem} is devoted to proof of the main theorem which is based on the relationship between the condition $\max_A (\deg_\lambda P(A + \lambda B)) \le 1$ and the condition $\rk(B) \le 1$ (this relationship is also established in Section~\ref{sec:maintheorem}); in Section~\ref{sec:K1} we provide the characterisation of linear maps preserving $\det_{n\,1}$; in Section~\ref{sec:K2} we provide an example of linear map preserving $\det_{n\,2}$ which does not admit a representation~\eqref{thm:MainTheoremEven:eq1} and shows that the main theorem does not hold for $k = 2$ and even $n$.

\section{Preliminaries}\label{sec:prelim}

Let us first list the properties of $\det_{n\,k}$ which are similar to corresponding properties of the ordinary determinant (see \cite[\textsection 5, \textsection 27, \textsection 32]{cullis1913} or~\cite{NAKAGAMI2007422} for detailed proofs).

\begin{theorem}[{\cite[Theorem 13, Theorem 16]{NAKAGAMI2007422}}]
\begin{enumerate}
\item[]
\item For $X \in \M_{n}(\mathbb F),$ $\det_{n\,n}(X) = \det (X).$
\item For $X \in \M_{n\,k}(\mathbb F),$ $\det_{n\,k}(X)$ is a linear function of columns of $X$.
\item If a matrix $X \in \M_{n\,k}(\mathbb F)$ has two identical columns or one of its columns is a linear combination of other columns, then $\det_{n\,k}(X)$ is equal to zero.
\item For $X \in \M_{n\,k}(\mathbb F),$ interchanging any two columns of $X$ changes the sign of $\det_{n\,k}(X)$.
\item Adding a linear combination of columns of $X$ to another column of $X$ does not change $\det_{n\,k}(X)$.
\item For $X \in \M_{n\,k}(\mathbb F),$ $\det_{n\,k}(X)$ can be calculated using the Laplace expansion along a column of $X$ (see Lemma~\ref{lem:DetNKLaplaceExp} for precise formulation).
\end{enumerate}
\end{theorem}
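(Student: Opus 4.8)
The plan is to derive all six properties directly from the Leibniz-type expression in Definition~\ref{def:CullisDet}, the only genuinely new ingredient being the behaviour of the weight $\sgn_{n\,k}$ under reindexing of the columns. The key preliminary step I would isolate is the following multiplicativity lemma: for every injection $\sigma \in \mathcal C_{[n]}^{k}$ and every permutation $\tau \in S_k$ one has $\sgn_{n\,k}(\sigma\circ\tau) = \sgn(\tau)\,\sgn_{n\,k}(\sigma)$. To prove it I would decompose $\sigma$ as a pair $(c,\rho)$, where $c = \sigma([k]) \in \binom{[n]}{k}$ records the image and $\rho \in S_k$ is the relative-order permutation defined by $\sigma(\alpha) = c(\rho(\alpha))$; writing $\phi(\alpha) = c(\alpha)$ for the order-preserving bijection $[k]\to c$ one checks that $\pi_\sigma = \phi\circ\rho\circ\phi^{-1}$, so $\sgn(\pi_\sigma) = \sgn(\rho)$ and hence $\sgn_{n\,k}(\sigma) = \sgn(\rho)\,\sgn_{[n]}(c)$. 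Since precomposition by $\tau$ leaves $c$ unchanged and replaces $\rho$ by $\rho\circ\tau$, the lemma follows at once from $\sgn(\rho\circ\tau) = \sgn(\rho)\sgn(\tau)$.

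Granting this, properties (1), (2) and (4) are short. For (2), each monomial $x_{\sigma(1)\,1}\cdots x_{\sigma(k)\,k}$ contains exactly one factor from each column, so the sum is linear in each column separately. For (1), when $n=k$ every injection is a permutation, its image is $c=[k]$ with $\sgn_{[n]}([k]) = 1$ and $\pi_\sigma = \sigma$, so the formula collapses to the Leibniz expansion of $\det$. For (4), letting $X'$ be obtained from $X$ by interchanging columns $p$ and $q$, I would reindex the defining sum for $\det_{n\,k}(X')$ by the involution $\sigma \mapsto \sigma\circ(p\,q)$ of $\mathcal C_{[n]}^{k}$: the entry swap makes the monomials agree term by term, while the multiplicativity lemma contributes the factor $\sgn\bigl((p\,q)\bigr) = -1$, giving $\det_{n\,k}(X') = -\det_{n\,k}(X)$.

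For (3) I would treat the two clauses separately and, importantly, in a characteristic-free way. If columns $p<q$ of $X$ coincide, I pair each $\sigma$ with $\sigma\circ(p\,q)$; using the column equality the two associated monomials are literally equal, while the weights are opposite by the lemma, so the pairs cancel even in characteristic two. The clause where one column is a linear combination of the others then follows from multilinearity (2): expanding that column reduces $\det_{n\,k}(X)$ to a combination of determinants each having two equal columns, hence zero. Property (5) is obtained the same way: by (2), adding a multiple of column $l$ to column $j$ changes $\det_{n\,k}(X)$ by a multiple of a determinant with columns $j$ and $l$ equal, which vanishes.

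The genuinely technical point, and the main obstacle, is the Laplace expansion (6). I would group the terms of the defining sum according to the value $i = \sigma(j)$, factoring out $x_{i\,j}$, so that $\det_{n\,k}(X) = \sum_{i=1}^{n} x_{i\,j}\sum_{\sigma:\,\sigma(j)=i}\sgn_{n\,k}(\sigma)\prod_{j'\neq j}x_{\sigma(j')\,j'}$, and then identify the inner sum with $\det_{(n-1)(k-1)}(X(i|j))$ up to an explicit sign. The difficulty is entirely in the sign bookkeeping: the restriction map sending an injection $\sigma$ with $\sigma(j)=i$ to the induced injection $[k-1]\to[n-1]$ obtained by deleting row $i$ and column $j$ does not respect $\sgn_{n\,k}$ on the nose, and one must compute how both the permutation part $\sgn(\pi_\sigma)$ and the set part $\sgn_{[n]}(c)$ transform when the index $i$ is removed from $[n]$ and the index $j$ from $[k]$. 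Carrying this computation out yields the correction factor and produces the precise statement recorded in Lemma~\ref{lem:DetNKLaplaceExp}; I expect this to be the place where all the care is needed, the remaining properties being formal consequences of multilinearity together with the multiplicativity lemma.
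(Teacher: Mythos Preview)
The paper does not give its own proof of this theorem: it is stated with a citation to \cite[Theorems~13 and~16]{NAKAGAMI2007422} (and, in the introduction, to \cite[\S5, \S27, \S32]{cullis1913}), so there is nothing in the paper to compare your argument against. That said, your proposal is sound and follows the standard route. The multiplicativity lemma $\sgn_{n\,k}(\sigma\circ\tau)=\sgn(\tau)\,\sgn_{n\,k}(\sigma)$ is exactly the right device, and your decomposition $\sigma\leftrightarrow(c,\rho)$ with $\pi_\sigma=\phi\rho\phi^{-1}$ cleanly reduces it to the multiplicativity of the ordinary sign. Your derivations of (1), (2), (4), (3), and (5) from it are correct and, importantly, characteristic-free; the fixed-point-free pairing argument for (3) is the right way to avoid the ``$2\det=0$'' trap in characteristic two.

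For (6) you correctly identify where the work lies but stop short of doing it. What remains is to show that for fixed $i$ the map sending $\sigma$ with $\sigma(j)=i$ to the induced injection $\widehat\sigma\colon[k-1]\to[n-1]$ (obtained by deleting column~$j$ and relabelling $[n]\setminus\{i\}$ as $[n-1]$ monotonically) satisfies $\sgn_{n\,k}(\sigma)=(-1)^{i+j}\sgn_{(n-1)(k-1)}(\widehat\sigma)$. One clean way is to separate the two factors: removing $i$ from $c=\sigma([k])$ multiplies $\sgn_{[n]}(c)$ by $(-1)^{i-\beta}$, where $\beta$ is the position of $i$ in $c$, while removing the pair $(j,\beta)$ from $\rho$ multiplies $\sgn(\rho)$ by $(-1)^{j-\beta}$ (count the inversions involving position $j$); the product is $(-1)^{i+j}$ as required. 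Filling in this computation would complete your proof; the cited references carry it out in essentially this form.
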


\begin{corollary}\label{cor:CullisBinomialExpansion}Let $n \ge k$, $A, B \in \M_{n\,k} (\F).$
Then
\begin{multline}\label{eq:CullisBinomialExpansion}
\det_{n\,k} (A + \lambda B)\\
= \sum_{d = 0}^{k}\lambda^d \left( \sum_{1 \le i_1 < \ldots < i_d \le k} \det_{n\,k}\Bigl(A(|1]\Big|\ldots \Big| B(|i_1] \Big| \ldots \Big| B(|i_d] \Big| \ldots \Big| A(|k] \Bigr)\right),
\end{multline}
where both sides of the equality are considered as formal polynomials in $\lambda$, i.e. as elements of $\F[\lambda]$.
\end{corollary}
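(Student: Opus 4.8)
The plan is to obtain this directly from the column-multilinearity of $\det_{n\,k}$. First I would observe that the explicit sum formula in Definition~\ref{def:CullisDet} makes sense verbatim over an arbitrary commutative ring, in particular over the polynomial ring $\F[\lambda]$, since each monomial $x_{\sigma(1)\,1}x_{\sigma(2)\,2}\cdots x_{\sigma(k)\,k}$ is linear in every column; consequently $\det_{n\,k}$ remains a multilinear function of the columns when the matrix has entries in $\F[\lambda]$. Thus both sides of~\eqref{eq:CullisBinomialExpansion} may legitimately be computed inside $\F[\lambda]$, and the matrix $A+\lambda B$ has $j$-th column equal to the $\F[\lambda]$-linear combination $A(|j] + \lambda\,B(|j]$ of the two columns $A(|j]$ and $B(|j]$.

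Next I would expand by linearity in the first column, then the second, and so on through the $k$-th column. This writes $\det_{n\,k}(A+\lambda B)$ as a sum of $2^{k}$ determinants indexed by the subsets $S\subseteq[k]$ recording which columns are taken from $\lambda B$ rather than from $A$: for $S=\{i_1<\cdots<i_d\}$ the corresponding summand is $\lambda^{|S|}\det_{n\,k}(M_S)$, where the scalar $\lambda^{|S|}$ is pulled out of the $d$ columns taken from $\lambda B$, and $M_S$ is the matrix whose $j$-th column equals $B(|j]$ when $j\in S$ and $A(|j]$ otherwise. Grouping these $2^{k}$ terms according to $d=|S|\in\{0,1,\ldots,k\}$ produces exactly the inner sum over increasing sequences $1\le i_1<\cdots<i_d\le k$ appearing in~\eqref{eq:CullisBinomialExpansion}; since every $\det_{n\,k}(M_S)$ has entries in $\F$, the identity is an equality in $\F[\lambda]$ as stated, with the left-hand side of degree at most $k$ in $\lambda$.

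I do not expect any genuine obstacle here; the content is purely the bookkeeping of distributing a product of $k$ binomials across the multilinear map $\det_{n\,k}$. The only point deserving a word of care is the passage to the coefficient ring $\F[\lambda]$: one could instead treat $\lambda$ as ranging over $\F$ (or over a sufficiently large extension), expand pointwise using the multilinearity already recorded for matrices over $\F$, and then invoke the fact that two polynomials of degree at most $k$ agreeing at more than $k$ points are equal — which is where a size hypothesis such as $|\F|>k$ would enter — but the formal route via multilinearity over $\F[\lambda]$ is cleaner and requires no assumption on $\F$ at all.
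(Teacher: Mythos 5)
Your proof is correct and is essentially the paper's own argument: the paper disposes of this corollary in one line as a direct consequence of column-multilinearity, and you have simply carried out the $2^{k}$-term expansion and the grouping by $d=|S|$ explicitly, together with the (valid and welcome) remark that the formal identity in $\F[\lambda]$ follows from multilinearity over the polynomial ring without any hypothesis on $|\F|$.
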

\begin{proof}This is a direct consequence from the multilinearity of $\det_{n\,k}$ with respect to the columns of a matrix.
\end{proof}
\begin{corollary}\label{cor:DegDetABLEQK}If $A, B \in \M_{n\,k}(\F)$, then $\deg_\lambda\left(\det_{n\,k} (A + \lambda B)\right) \le k$.
\end{corollary}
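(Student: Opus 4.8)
The plan is to read the bound off directly from the binomial-type expansion already recorded in Corollary~\ref{cor:CullisBinomialExpansion}. That expansion writes $\det_{n\,k}(A + \lambda B)$ as $\sum_{d=0}^{k} \lambda^{d} c_{d}$, where each coefficient $c_{d} = \sum_{1 \le i_1 < \ldots < i_d \le k} \det_{n\,k}\bigl(\ldots\bigr) \in \F$ does not involve $\lambda$. Since the highest power of $\lambda$ that can occur on the right-hand side is $\lambda^{k}$ (the summand $d = k$), the element $\det_{n\,k}(A + \lambda B) \in \F[\lambda]$ has degree at most $k$, which is exactly the claim. So the corollary is literally a one-line consequence of the preceding one.

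Alternatively, and perhaps more self-containedly, I would argue straight from Definition~\ref{def:CullisDet}. The $(i,j)$ entry of $A + \lambda B$ is $a_{i\,j} + \lambda b_{i\,j}$, a polynomial in $\lambda$ of degree at most $1$. By definition, $\det_{n\,k}(A + \lambda B)$ is a finite sum, over injections $\sigma \in \mathcal C_{[n]}^{k}$, of the terms $\sgn_{n\,k}(\sigma)\prod_{j=1}^{k}\bigl(a_{\sigma(j)\,j} + \lambda b_{\sigma(j)\,j}\bigr)$; each such term is a product of $k$ polynomials of degree at most $1$, hence has degree at most $k$, and a finite sum of polynomials of degree at most $k$ again has degree at most $k$.

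The only subtlety worth flagging is that the assertion concerns the formal polynomial in $\F[\lambda]$: after collecting coefficients there may be cancellation that drops the actual degree below $k$ (for instance when $\rk(B)$ is small), so $k$ is an upper bound that need not be attained. This is consistent with the convention, fixed in Corollary~\ref{cor:CullisBinomialExpansion}, of regarding both sides of the expansion as elements of $\F[\lambda]$. There is no real obstacle here; the content of the corollary is merely to pin down the degree range $0 \le d \le k$ that will be needed for the finer degree analysis in Section~\ref{sec:maintheorem}, where the precise link between $\max_{A}\deg_{\lambda}\det_{n\,k}(A + \lambda B)$ and $\rk(B)$ is established.
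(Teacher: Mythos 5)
Your first argument is exactly the paper's (implicit) proof: the corollary is stated as an immediate consequence of the expansion in Corollary~\ref{cor:CullisBinomialExpansion}, whose sum over $d$ runs only up to $k$, so the degree bound is read off directly. The alternative argument from Definition~\ref{def:CullisDet} and the remark about possible cancellation are both correct but add nothing beyond what the paper intends.
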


\begin{definition}Let $c \in \binom{[n]}{k}$ and $c = \{i_1, \ldots, i_k\}.$ By $P_c \in \M_{n\,n}(\F)$ we denote a matrix defined by
\[
P_c = E_{i_1\,i_1} + \ldots + E_{i_k\,i_k}.
\]
\end{definition}

\begin{lemma}[{Cf.~\cite[Lemma~8]{NAKAGAMI2007422}}]
Let $X \in \M_{n\,k}(\F).$ Then
$$\sgn_{[n]}(c) \det_{n\,k}(P_cX) = \det_{k} \Bigl(X[c|)\Bigr)$$
and
$$\det_{n\,k}(X) = \sum_{c \in \binom{[n]}{k}} \det_{n\,k} (P_c X).$$
\end{lemma}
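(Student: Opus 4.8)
The plan is to expand both identities directly from Definition~\ref{def:CullisDet}, with no extra ingredients. The key observation is that left multiplication by $P_c$ annihilates every row of $X$ whose index is not in $c$: writing $c=\{i_1<\ldots<i_k\}$, one has $(P_cX)_{j\,l}=x_{j\,l}$ when $j\in c$ and $(P_cX)_{j\,l}=0$ otherwise. Substituting this into the defining sum
\[
\det_{n\,k}(P_cX)=\sum_{\sigma\in\mathcal C_{[n]}^{k}}\sgn_{n\,k}(\sigma)\,(P_cX)_{\sigma(1)\,1}\cdots(P_cX)_{\sigma(k)\,k}
\]
makes every term with $\sigma([k])\not\subseteq c$ vanish; since $\sigma$ is injective and $|c|=k$, the surviving terms are precisely those with $\sigma([k])=c$.

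Next I would reparametrise these $\sigma$. Regarding $c$ as the injection $\alpha\mapsto c(\alpha)=i_\alpha$, every injection $\sigma$ with image $c$ can be written uniquely as $\sigma=c\circ\tau$ for some $\tau\in S_k$, i.e.\ $\sigma(\alpha)=i_{\tau(\alpha)}$. By the definition of $\sgn_{n\,k}$ the permutation $\pi_\sigma$ then sends $i_\alpha\mapsto i_{\tau(\alpha)}$; being conjugate to $\tau$ through the order isomorphism $\alpha\mapsto i_\alpha$, it has sign $\sgn(\pi_\sigma)=\sgn(\tau)$, whence $\sgn_{n\,k}(\sigma)=\sgn(\tau)\,\sgn_{[n]}(c)$. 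Consequently
\[
\det_{n\,k}(P_cX)=\sgn_{[n]}(c)\sum_{\tau\in S_k}\sgn(\tau)\,x_{i_{\tau(1)}\,1}\cdots x_{i_{\tau(k)}\,k}=\sgn_{[n]}(c)\,\det_k\bigl(X[c|)\bigr),
\]
since $X[c|)$ is exactly the $k\times k$ matrix with $(\alpha,l)$-entry $x_{i_\alpha\,l}$. Multiplying through by $\sgn_{[n]}(c)\in\{\pm1\}$ and using $\sgn_{[n]}(c)^2=1$ yields the first identity.

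For the second identity I would sum the displayed expansion of $\det_{n\,k}(P_cX)$ over all $c\in\binom{[n]}{k}$. Every injection $\sigma\in\mathcal C_{[n]}^{k}$ has exactly one image $\sigma([k])\in\binom{[n]}{k}$, so the sets $\{\sigma:\sigma([k])=c\}$, as $c$ ranges over $\binom{[n]}{k}$, partition $\mathcal C_{[n]}^{k}$; hence
\[
\sum_{c\in\binom{[n]}{k}}\det_{n\,k}(P_cX)=\sum_{c\in\binom{[n]}{k}}\ \sum_{\sigma([k])=c}\sgn_{n\,k}(\sigma)\prod_{l=1}^{k}x_{\sigma(l)\,l}=\sum_{\sigma\in\mathcal C_{[n]}^{k}}\sgn_{n\,k}(\sigma)\prod_{l=1}^{k}x_{\sigma(l)\,l}=\det_{n\,k}(X).
\]

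There is no genuine obstacle: the whole proof is a bookkeeping exercise on the defining formula. The single point that requires care is the sign computation — correctly recognising $\sgn(\pi_\sigma)$ as $\sgn(\tau)$ once the image of $\sigma$ is reindexed through the increasing enumeration of $c$, and noting that $\sgn_{[n]}(c)$ is its own inverse because it equals $\pm1$.
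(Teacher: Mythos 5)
Your proof is correct. The paper itself gives no proof of this lemma (it is imported from the cited reference of Nakagami), and your direct expansion from Definition~\ref{def:CullisDet} — observing that $P_c$ kills the rows outside $c$, reparametrising the surviving injections as $\sigma = c\circ\tau$ with $\tau\in S_k$, identifying $\sgn(\pi_\sigma)=\sgn(\tau)$ by conjugation through the increasing enumeration of $c$, and then partitioning $\mathcal C_{[n]}^{k}$ by image for the second identity — is exactly the standard argument one would supply.
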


\begin{corollary}\label{lem:DetNKAsSumProj}\label{cor:CullisAsSumDet}
The Cullis' determinant of a matrix $X\in \M_{n\, k}(\F)$ is an alternating sum of basic minors of $X$. That is,
\begin{equation}\label{cor:CullisAsSumDet:eq}
\det_{n\,k}(X) = \sum_{c \in \binom{[n]}{k}}\sgn_{[n]}(c) \det_{k} \Bigl(X[c|)\Bigr).
\end{equation}
\end{corollary}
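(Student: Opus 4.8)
The plan is to obtain the formula directly by combining the two identities in the Lemma that immediately precedes the statement. For every $c \in \binom{[n]}{k}$ the quantity $\sgn_{[n]}(c)$ equals $\pm 1$, so $\sgn_{[n]}(c)^2 = 1$. Multiplying the first identity of that Lemma, namely $\sgn_{[n]}(c)\,\det_{n\,k}(P_c X) = \det_k\bigl(X[c|)\bigr)$, through by $\sgn_{[n]}(c)$ therefore yields
\[
\det_{n\,k}(P_c X) = \sgn_{[n]}(c)\,\det_k\bigl(X[c|)\bigr).
\]
Summing this over all $c \in \binom{[n]}{k}$ and inserting the second identity of the Lemma, $\det_{n\,k}(X) = \sum_{c \in \binom{[n]}{k}} \det_{n\,k}(P_c X)$, produces exactly \eqref{cor:CullisAsSumDet:eq}. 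The verbal assertion that $\det_{n\,k}(X)$ is ``an alternating sum of basic minors'' is then just a reading of the right-hand side: each $\det_k\bigl(X[c|)\bigr)$ is a basic (maximal-size) minor of $X$, and the scalars $\sgn_{[n]}(c) = (-1)^{\sum_{\alpha}(c(\alpha)-\alpha)}$ are the attached signs.

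There is essentially no obstacle in this argument: the corollary is immediate once the Lemma is available, and the only point requiring a moment's attention is that $\sgn_{[n]}$ takes values in $\{1,-1\}$, which is what lets one cancel it on the left-hand side of the first identity.

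For completeness I note how one would argue without citing the Lemma, starting from Definition~\ref{def:CullisDet}. One partitions the set of injections $\sigma$ from $[k]$ to $[n]$ according to their image $c := \sigma([k]) \in \binom{[n]}{k}$. By definition $\sgn_{n\,k}(\sigma) = \sgn(\pi_\sigma)\cdot\sgn_{[n]}(c)$, and $\sgn_{[n]}(c)$ is constant on each block, so
\[
\det_{n\,k}(X) = \sum_{c \in \binom{[n]}{k}} \sgn_{[n]}(c) \sum_{\sigma\colon \sigma([k]) = c} \sgn(\pi_\sigma)\, x_{\sigma(1)\,1}\cdots x_{\sigma(k)\,k}.
\]
Reindexing the inner sum via the order-preserving bijection $[k] \to c$ identifies it with the ordinary determinant $\det_k\bigl(X[c|)\bigr)$, which reproduces \eqref{cor:CullisAsSumDet:eq}; this is also the natural proof of the Lemma itself, so in practice I would simply invoke the Lemma.
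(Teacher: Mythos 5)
Your proposal is correct and matches the paper's intent exactly: the paper states this as an immediate corollary of the preceding lemma without writing out a proof, and the derivation is precisely your combination of the two identities using $\sgn_{[n]}(c)^2 = 1$. The alternative argument from the definition is also sound but unnecessary given the lemma.
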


\begin{lemma}[{Cf.~\cite[Proposition~24]{NAKAGAMI2007422}}]\label{lem:DetNKOfMatrixProduct}
Let $1 \le l \le k \le n.$ Then for any matrix $X \in \M_{n\,k}(\F)$ and any matrix $Y \in \M_{k\,l}(\F)$ it holds that
\[
\det_{n\,l}(XY) = \sum_{d \in \binom{[k]}{l}} \det_{n\,l}\Bigl(X(|d]\Bigr)\cdot \det_l\Bigl(Y[d|)\Bigr).
\]
\end{lemma}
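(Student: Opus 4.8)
The plan is to reduce the statement to the ordinary Cauchy--Binet formula for square determinants, using Corollary~\ref{cor:CullisAsSumDet}, which expresses $\det_{n\,l}$ as a signed sum of its maximal square minors.

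First I would apply Corollary~\ref{cor:CullisAsSumDet} to the matrix $XY \in \M_{n\,l}(\F)$, obtaining
\[
\det_{n\,l}(XY) = \sum_{c \in \binom{[n]}{l}} \sgn_{[n]}(c)\, \det_{l}\bigl((XY)[c|)\bigr).
\]
Next I would use that selecting the rows indexed by $c$ commutes with left multiplication, i.e. $(XY)[c|) = \bigl(X[c|)\bigr)\,Y$, where $X[c|) \in \M_{l\,k}(\F)$ and $Y \in \M_{k\,l}(\F)$. Applying the classical Cauchy--Binet theorem to this product of an $l\times k$ matrix by a $k\times l$ matrix gives
\[
\det_{l}\bigl((X[c|))\,Y\bigr) = \sum_{d \in \binom{[k]}{l}} \det_{l}\bigl(X[c|d]\bigr)\, \det_{l}\bigl(Y[d|)\bigr).
\]

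Substituting this into the first display and interchanging the two finite summations yields
\[
\det_{n\,l}(XY) = \sum_{d \in \binom{[k]}{l}} \det_{l}\bigl(Y[d|)\bigr)\left(\sum_{c \in \binom{[n]}{l}} \sgn_{[n]}(c)\, \det_{l}\bigl(X[c|d]\bigr)\right).
\]
To finish, I would identify the inner sum. Since $X[c|d] = \bigl(X(|d]\bigr)[c|)$ and $X(|d] \in \M_{n\,l}(\F)$ with $n \ge l$, Corollary~\ref{cor:CullisAsSumDet} applied to the matrix $X(|d]$ gives $\sum_{c \in \binom{[n]}{l}} \sgn_{[n]}(c)\, \det_{l}\bigl((X(|d])[c|)\bigr) = \det_{n\,l}\bigl(X(|d]\bigr)$, which is precisely the claimed right-hand side. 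Here one uses that $\sgn_{[n]}(c)$ depends only on $c$, so passing between $n\times l$ and $n\times k$ ambient sizes causes no discrepancy.

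I do not expect a genuine obstacle here; the only points needing care are the submatrix bookkeeping --- namely the identities $(XY)[c|) = (X[c|))\,Y$ and $X[c|d] = (X(|d])[c|)$ --- together with the fact that we are permitted to invoke the ordinary Cauchy--Binet formula. If one prefers a self-contained derivation, one can instead expand $\det_{n\,l}(XY)$ directly by multilinearity in the columns, as in Corollary~\ref{cor:CullisBinomialExpansion}, using that the $j$-th column of $XY$ is $\sum_{m} Y_{m\,j}\, X(|m]$; the terms indexed by non-injective maps $[l]\to[k]$ vanish because two columns then coincide, and collecting the injective ones according to their common image $d\in\binom{[k]}{l}$, with the sign coming from reordering the chosen columns of $X$ into increasing order, reproduces the right-hand side --- but this route merely re-proves Cauchy--Binet in passing.
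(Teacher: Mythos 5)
Your proof is correct. Note that the paper does not actually supply a proof of Lemma~\ref{lem:DetNKOfMatrixProduct}; it only cites Proposition~24 of the Nakagami reference, so there is no internal argument to compare against. Your route --- expand $\det_{n\,l}(XY)$ via Corollary~\ref{cor:CullisAsSumDet}, use $(XY)[c|) = \bigl(X[c|)\bigr)Y$, apply classical Cauchy--Binet to each $l\times l$ minor, swap the finite sums, and then recognize the inner sum as $\det_{n\,l}\bigl(X(|d]\bigr)$ again via Corollary~\ref{cor:CullisAsSumDet} --- is a clean, self-contained reduction to the square case. The two bookkeeping identities you flag, $(XY)[c|) = \bigl(X[c|)\bigr)Y$ and $X[c|d] = \bigl(X(|d]\bigr)[c|)$, are both valid in the paper's submatrix notation, and the final identification is legitimate precisely because, as the paper's Note records, $\sgn_{[n]}(c)$ depends only on $c$ and not on the ambient number of columns, so applying Corollary~\ref{cor:CullisAsSumDet} to the $n\times l$ matrix $X(|d]$ rather than to an $n\times k$ matrix introduces no sign discrepancy. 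Your alternative sketch (direct multilinear expansion of the columns of $XY$) would also work but, as you say, amounts to reproving Cauchy--Binet.
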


\begin{corollary}\label{cor:rightmatrixmult}If $X \in \M_{n\,k}(\F)$ and $Y \in \M_{k\,k}(\F)$, then
\[
\det_{n\,k}(XY) =  \det_{n\,k}(X)\det_k(Y).
\]
\end{corollary}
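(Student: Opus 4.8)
The plan is to read this off as the degenerate case $l = k$ of Lemma~\ref{lem:DetNKOfMatrixProduct}. First I would check that the hypothesis $1 \le l \le k \le n$ of that lemma is satisfied with $l := k$, which holds because $k \le n$. Applying the lemma then gives
\[
\det_{n\,k}(XY) = \sum_{d \in \binom{[k]}{k}} \det_{n\,k}\bigl(X(|d]\bigr)\cdot \det_k\bigl(Y[d|)\bigr).
\]

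The second step is to note that $\binom{[k]}{k}$ is the one-element set $\{[k]\}$, so the sum collapses to its single summand, indexed by $d = [k] = \{1,\dots,k\}$. For this $d$ one has $X(|d] = X$ (no rows are struck out and all $k$ columns are selected) and $Y[d|) = Y$ (all rows and all columns are selected), whence $\det_{n\,k}(XY) = \det_{n\,k}(X)\cdot\det_k(Y)$, as desired. There is essentially no obstacle here; the only point requiring a moment's care is the submatrix convention that an empty list of struck-out rows together with a full list of selected columns produces the whole matrix, so that the surviving term is genuinely $\det_{n\,k}(X)\cdot\det_k(Y)$ rather than something needing further identification.

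As an alternative, fully self-contained route I would instead start from Corollary~\ref{cor:CullisAsSumDet}: since selecting the rows indexed by $c$ from the product $XY$ equals first selecting those rows from $X$ and then right-multiplying by $Y$, we have $(XY)[c|) = \bigl(X[c|)\bigr)Y$ for every $c \in \binom{[n]}{k}$, and therefore $\det_k\bigl((XY)[c|)\bigr) = \det_k\bigl(X[c|)\bigr)\det_k(Y)$ by multiplicativity of the ordinary determinant of square matrices. Multiplying by $\sgn_{[n]}(c)$, summing over $c \in \binom{[n]}{k}$, and factoring out the constant $\det_k(Y)$ yields $\det_{n\,k}(XY) = \det_k(Y)\sum_{c}\sgn_{[n]}(c)\det_k\bigl(X[c|)\bigr) = \det_{n\,k}(X)\det_k(Y)$. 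This avoids invoking the more general Lemma~\ref{lem:DetNKOfMatrixProduct} at the cost of re-deriving a special instance of it.
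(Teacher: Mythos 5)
Your primary argument is exactly the paper's intended proof: the corollary is stated as an immediate consequence of Lemma~\ref{lem:DetNKOfMatrixProduct} with $l = k$, where $\binom{[k]}{k}$ is a singleton and the lone term $\det_{n\,k}\bigl(X(|[k]]\bigr)\det_k\bigl(Y[[k]|)\bigr)$ is $\det_{n\,k}(X)\det_k(Y)$. Your alternative via Corollary~\ref{cor:CullisAsSumDet} and the identity $(XY)[c|) = \bigl(X[c|)\bigr)Y$ is also correct, but the first route is the one the paper takes.
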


\begin{lemma}[{Cf.~\cite[Theorem~16]{NAKAGAMI2007422}}]\label{lem:DetNKLaplaceExp}
Let $1 < k \le n$. For any $n \times k$ matrix $X = (x_{i\,j})$ the expansion of $\det_{n\,k}(X)$ along the $j$-th column
is given by
\[
\det_{n\,k}(X) = \sum_{i = 1}^n (-1)^{i+j} x_{i\,j} \det_{(n-1)\,(k-1)}\Bigl(X(i|j)\Bigr).
\]
\end{lemma}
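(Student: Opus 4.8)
The plan is to reduce the statement to the classical Laplace expansion for ordinary square determinants by invoking the representation of $\det_{n\,k}$ as an alternating sum of basic minors from Corollary~\ref{cor:CullisAsSumDet}. Starting from
\[
\det_{n\,k}(X) = \sum_{c \in \binom{[n]}{k}} \sgn_{[n]}(c)\,\det_k\bigl(X[c|)\bigr),
\]
I would expand each square minor $\det_k\bigl(X[c|)\bigr)$ along its $j$-th column. Writing $c = \{i_1 < \cdots < i_k\}$, the $(\alpha,j)$-entry of $X[c|)$ equals $x_{i_\alpha\,j}$, and deleting its $\alpha$-th row and $j$-th column produces $X[c\setminus\{i_\alpha\}\,|\,[k]\setminus\{j\}]$; hence the ordinary Laplace expansion gives $\det_k\bigl(X[c|)\bigr) = \sum_{\alpha=1}^k (-1)^{\alpha+j} x_{i_\alpha\,j}\,\det_{k-1}\bigl(X[c\setminus\{i_\alpha\}\,|\,[k]\setminus\{j\}]\bigr)$.

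Next I would swap the order of summation so that the outer sum runs over the row index $i := i_\alpha$ instead of over $c$. For fixed $i$, each $c$ containing $i$ corresponds bijectively to $c' := c\setminus\{i\} \in \binom{[n]\setminus\{i\}}{k-1}$, and the position of $i$ inside $c$ is $\alpha = 1 + |\{m\in c' : m < i\}|$. Collecting the common factor $(-1)^j$, this rewrites the double sum as $\sum_{i=1}^n (-1)^j x_{i\,j}$ multiplied by the inner sum $\sum_{c'} \sgn_{[n]}(c'\cup\{i\})\,(-1)^{\alpha}\,\det_{k-1}\bigl(X[c'\,|\,[k]\setminus\{j\}]\bigr)$. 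The minors appearing here are exactly the basic minors of the deleted matrix $X(i|j)$, once the rows $[n]\setminus\{i\}$ are relabelled onto $[n-1]$ by the order-preserving bijection (sending $m$ to $m$ for $m<i$ and to $m-1$ for $m>i$), which carries $c'$ to some $c'' \in \binom{[n-1]}{k-1}$.

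The crux of the proof, and the only genuinely delicate point, is the sign reconciliation: I must verify that $\sgn_{[n]}(c'\cup\{i\})\,(-1)^{\alpha} = (-1)^{i}\,\sgn_{[n-1]}(c'')$. Using the closed form $\sgn_{[n]}(c) = (-1)^{\left(\sum_{m\in c} m\right) - k(k+1)/2}$ read off directly from the definition, together with the relations $\sum_{m\in c''} m = \bigl(\sum_{m\in c'} m\bigr) - |\{m\in c':m>i\}|$, $\alpha = 1 + |\{m\in c':m<i\}|$, and the fact that $|c'| = k-1$, the two exponents are seen to differ by $1 + (k-1) - k = 0$, so the signs coincide. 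With this identity the inner sum becomes $\sum_{c''}\sgn_{[n-1]}(c'')\,\det_{k-1}\bigl(Y[c''|)\bigr)$, where $Y := X(i|j) \in \M_{(n-1)\,(k-1)}(\F)$, which by Corollary~\ref{cor:CullisAsSumDet} applied to $Y$ equals $\det_{(n-1)\,(k-1)}\bigl(X(i|j)\bigr)$. Substituting back yields the asserted expansion $\det_{n\,k}(X) = \sum_{i=1}^n (-1)^{i+j} x_{i\,j}\,\det_{(n-1)\,(k-1)}\bigl(X(i|j)\bigr)$, completing the argument.
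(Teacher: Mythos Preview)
Your argument is correct. The reduction to Corollary~\ref{cor:CullisAsSumDet} followed by the ordinary Laplace expansion of each square minor and the Fubini-style regrouping over the row index $i$ is valid, and the sign identity $\sgn_{[n]}(c'\cup\{i\})\,(-1)^{\alpha} = (-1)^{i}\,\sgn_{[n-1]}(c'')$ indeed holds by the computation you outline (the difference of exponents collapses to $1+(k-1)-k=0$).

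As for comparison with the paper: the paper does \emph{not} supply a proof of this lemma. It is stated in the preliminaries with a reference to \cite[Theorem~16]{NAKAGAMI2007422} and no argument is given. Your derivation is therefore an independent, self-contained proof rather than a reproduction of anything in the present paper. It is worth noting that the cited source proves the expansion directly from the defining sum over injections $\sigma\in\mathcal C^{[n]}_k$ (Definition~\ref{def:CullisDet}) by splitting according to the value $\sigma(j)$, which avoids passing through Corollary~\ref{cor:CullisAsSumDet}; your route via the minor decomposition is slightly less direct but has the advantage that the sign bookkeeping is localised in a single clean identity, whereas the injection-based argument requires tracking how removing one value from $\sigma$ affects both the permutation sign and the sign of the image set simultaneously.
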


\begin{lemma}[Invariance of $\det_{n\,k}$ under semi-cyclic shifts, Cf.~{\cite[Theorem 3.6]{amiri2010}}]\label{lem:PermKNEvenSemicyclic}If $k \le n$, and $k + n$ is even, then for all $X = (x_{i\,j}) \in \M_{n\,k}(\F)$ and $i \in \{1, \ldots, n\}$
\[
(-1)^{(n-i)k}
\begin{vmatrix}
x_{i\,1} & \cdots & x_{i\,k}\\
\vdots & \ddots & \vdots\\
x_{n\,1} & \cdots & x_{n\,k}\\
-x_{1\,1} & \cdots & -x_{1\,k}\\
  \vdots & \ddots & \vdots\\
-x_{(i-1)\,1} & \cdots & -x_{(i-1)\,k}\\
\end{vmatrix}_{n\,k} =
\begin{vmatrix}
x_{1\,1} & \cdots & x_{1\,k}\\
  \vdots & \ddots & \vdots\\
x_{n\,1} & \cdots & x_{n\,k}\\
\end{vmatrix}_{n\,k}.
\]
Here the matrix on the left-hand side of the equality is obtained from $X$ by performing the following sequence of operations: the row cyclical shift sending $i$-th row of $X$ to the first row of the result; multiplying the bottom $i-1$ rows by $-1$.
\end{lemma}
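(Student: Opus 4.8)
The plan is to prove the final statement (Lemma~\ref{lem:PermKNEvenSemicyclic}) by reducing the general cyclic shift to the single elementary shift that moves the top row to the bottom and negates it, and then iterating. So I would first establish the base case $i = 2$: the claim that
\[
(-1)^{k}
\begin{vmatrix}
x_{2\,1} & \cdots & x_{2\,k}\\
\vdots & \ddots & \vdots\\
x_{n\,1} & \cdots & x_{n\,k}\\
-x_{1\,1} & \cdots & -x_{1\,k}\\
\end{vmatrix}_{n\,k}
= \det_{n\,k}(X),
\]
and then observe that the general $i$-th shift is just $(i-1)$-fold composition of this operation, with the signs $(-1)^{(n-i)k}$ matching up multiplicatively across the iteration. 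The reason $k+n$ even is needed is exactly that each elementary shift contributes a factor that must be controlled; I expect the bookkeeping of these sign factors under iteration to be the part that needs care, but it is essentially mechanical once the base case is in hand.

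For the base case I would work directly from Corollary~\ref{cor:CullisAsSumDet} (or equivalently Definition~\ref{def:CullisDet}), expressing $\det_{n\,k}(X)$ as the alternating sum $\sum_{c \in \binom{[n]}{k}}\sgn_{[n]}(c)\det_k(X[c|))$ over $k$-subsets of rows. The row operation described — cyclic shift sending row $i$ to the top, then negating the rows that wrapped around — induces a bijection on $\binom{[n]}{k}$, and for each $c$ I need to track two things: how $\sgn_{[n]}(c)$ changes when the row indices are relabelled by the cyclic shift, and what power of $-1$ is picked up both from the relabelling of rows inside the square minor $\det_k(X[c|))$ and from the explicit negations of the wrapped rows. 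The key computation is that, when $k+n$ is even, these contributions combine so that the term for $c$ in the shifted matrix equals the term for the corresponding subset in $X$ up to the global constant $(-1)^{(n-i)k}$ — which is why that prefactor appears. Alternatively, one can phrase the base-case shift as left multiplication of $X$ by a specific $n \times n$ matrix $Q$ (a signed permutation matrix realizing the cyclic shift with the wrap-around sign) and then apply Lemma~\ref{lem:DetNKOfMatrixProduct} with $l = k$ — but since $Q$ is not square-times-$X$ in the convenient way, the cleaner route is the direct subset-sum argument via Corollary~\ref{cor:CullisAsSumDet}.

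Concretely, the main steps in order:
\begin{enumerate}
\item Show that the row operation in the statement defines a bijection $\phi$ on $\binom{[n]}{k}$, given explicitly by the cyclic shift $r \mapsto r - i + 1 \pmod n$ on row indices.
\item For a fixed $c = \{j_1 < \cdots < j_k\}$, compute the sign change in $\det_k\bigl(X[c|)\bigr)$ caused by (a) reordering the rows of $X[c|)$ according to $\phi$ and (b) the explicit negation of those $j_\alpha$ with $j_\alpha < i$; express the total as $(-1)^{e(c)}$ for an explicit exponent $e(c)$.
\item Compute $\sgn_{[n]}(\phi(c))/\sgn_{[n]}(c)$ as another explicit power of $-1$.
\item Combine steps 2 and 3, using the hypothesis that $k + n$ is even, to show the combined exponent is independent of $c$ and equals $(n-i)k \pmod 2$; hence the shifted Cullis' determinant equals $(-1)^{(n-i)k}\det_{n\,k}(X)$, which rearranges to the asserted identity.
\item (If one prefers the iterative route for the base case $i=2$ only and then argues the general case by composing $i-1$ such shifts, observe that the prefactors multiply as $(-1)^{k(i-1)} = (-1)^{(n-i)k}$ modulo $2$ precisely because $n+k$ is even — but doing step~4 directly handles all $i$ at once.)
\end{enumerate}
The main obstacle is step~2/step~4: correctly counting the parity of the permutation that sorts $\phi(c)$ back into increasing order, together with the negations, and checking that the $c$-dependent pieces cancel exactly when $n+k$ is even. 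This is the place where the hypothesis is used and where an off-by-one in the sign exponent would break the lemma, so I would verify it on small cases (e.g. $n=4,k=2$ and $n=5,k=3$) before writing it up in general.
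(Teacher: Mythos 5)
The paper does not actually prove this lemma: it is quoted as a known result (``Cf.~Theorem~3.6 of Amiri et al.'') among the preliminaries, so there is no internal proof to compare against. Your plan, however, is a correct and workable self-contained derivation from Corollary~\ref{cor:CullisAsSumDet}, and the sign bookkeeping you flag as the crux does close. Concretely, for a subset $c$ with $m$ elements below $i$, the three contributions you list are $(-1)^{m(k-m)}$ (moving the $m$ wrapped rows from the top of the minor to the bottom), $(-1)^{m}$ (their negation), and $(-1)^{-(k-m)(i-1)+m(n-i+1)}$ (the change in $\sgn_{[n]}$ under the relabelling $r\mapsto r-i+1 \bmod n$); the total exponent reduces mod $2$ to $k(i+1)+m(n+k)$, so the hypothesis that $n+k$ is even is exactly what kills the $c$-dependent term $m(n+k)$ and leaves the constant $k(i+1)\equiv (n-i)k \pmod 2$. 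Your iterative route also works: the elementary shift multiplies $\det_{n\,k}$ by $(-1)^k$ (here the parity hypothesis enters already in the base case, via $\sgn(c')=(-1)^{n-k}\sgn(c)$ for the wrapped subsets), and $(-1)^{k(i-1)}=(-1)^{(n-i)k}$ again only because $n\equiv k \pmod 2$. Either version would serve as a proof; the direct subset-sum argument is the cleaner one to write up since it handles all $i$ uniformly.
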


\begin{lemma}[Cf.~{\cite[Lemma~20]{NAKAGAMI2007422}}]~\label{lem:NAKAGAMILEMMA20} Assume that $n > k \ge 1$. Let  $X \in \M_{n\,k}(\F)$ and $Y \in \M_{n\,(k+1)}$ is defined by $Y = X | \begin{psmallmatrix}1 \\ \vdots \\ 1\end{psmallmatrix}$. Then
\[
\det_{n\,(k+1)}(Y) = \begin{cases}
\det_{n\,k}(X), & \mbox{$n + k$ is odd},\\
0, & \mbox{$n + k$ is even}.
\end{cases}
\]
\end{lemma}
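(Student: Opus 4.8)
The plan is to reduce everything to the expansion of the Cullis determinant as an alternating sum of basic minors (Corollary~\ref{cor:CullisAsSumDet}), and then to carry out a purely combinatorial bookkeeping of signs. First I would write, by Corollary~\ref{cor:CullisAsSumDet} applied to $Y \in \M_{n\,(k+1)}(\F)$ (which is legitimate since $n \ge k+1$),
\[
\det_{n\,(k+1)}(Y) = \sum_{d \in \binom{[n]}{k+1}} \sgn(d)\,\det_{k+1}\bigl(Y[d|)\bigr).
\]
For $d = \{j_1 < \cdots < j_{k+1}\}$ the matrix $Y[d|)$ has first $k$ columns equal to $X[d|)$ and last column all ones, so the ordinary cofactor (Laplace) expansion along the last column gives $\det_{k+1}(Y[d|)) = \sum_{s=1}^{k+1} (-1)^{s+k+1}\,\det_k\bigl(X[d\setminus\{j_s\}|)\bigr)$. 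Re-indexing the resulting double sum by the $k$-subset $c := d\setminus\{j_s\}$ and the deleted row $m := j_s \in [n]\setminus c$ (a bijection between pairs $(d,s)$ and pairs $(c,m)$) yields
\[
\det_{n\,(k+1)}(Y) = \sum_{c \in \binom{[n]}{k}} \Bigl(\,\sum_{m \in [n]\setminus c} \sgn\bigl(c\cup\{m\}\bigr)\,(-1)^{\ell(c,m)+k+1}\Bigr)\det_k\bigl(X[c|)\bigr),
\]
where $\ell(c,m)$ denotes the position of $m$ in $c\cup\{m\}$ listed in increasing order.

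The core of the proof is then evaluating the inner coefficient. Directly from the definition of $\sgn$ (tracking how inserting $m$ into $c$ shifts each index $\alpha$ in the exponent $\sum_\alpha(d(\alpha)-\alpha)$ either to $\alpha$ or to $\alpha+1$) one obtains $\sgn(c\cup\{m\}) = \sgn(c)\,(-1)^{m-k-1}$, and clearly $\ell(c,m) = 1 + |\{i\in c : i < m\}|$. Writing $[n]\setminus c = \{m_1 < \cdots < m_{n-k}\}$, one checks that $|\{i\in c : i < m_r\}| = m_r - r$, so the term for $m = m_r$ equals $\sgn(c)(-1)^{m_r - k - 1 + (1 + m_r - r) + k + 1} = \sgn(c)(-1)^{2m_r + 1 - r} = \sgn(c)(-1)^{r+1}$. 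Hence the coefficient of $\det_k(X[c|))$ is $\sgn(c)\sum_{r=1}^{n-k}(-1)^{r+1}$, which equals $\sgn(c)$ if $n-k$ is odd and $0$ if $n-k$ is even. Since $n-k$ and $n+k$ have the same parity, comparing with the expansion of $\det_{n\,k}(X)$ from Corollary~\ref{cor:CullisAsSumDet} gives $\det_{n\,(k+1)}(Y) = \det_{n\,k}(X)$ when $n+k$ is odd, and $\det_{n\,(k+1)}(Y) = 0$ when $n+k$ is even, as claimed.

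I expect the only delicate point to be the sign bookkeeping in the identity $\sgn(c\cup\{m\}) = \sgn(c)(-1)^{m-k-1}$ and its combination with the Laplace sign $(-1)^{s+k+1}$; everything after the appearance of the telescoping sum $\sum_r(-1)^{r+1}$ is immediate. I would also note that no hypothesis on $|\F|$ is needed here: the claim is an identity of polynomials in the entries of $X$ obtained by comparing two literally equal linear combinations of the minors $\det_k(X[c|))$, and it holds over any field (in characteristic $2$ the signs collapse, but the parity count still produces the stated dichotomy).
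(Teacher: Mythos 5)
Your proof is correct. Note, however, that the paper does not actually prove this lemma: it is imported as a cited result (Lemma~20 of the Nakagami reference), so there is no internal proof to compare against. Your argument is a clean, self-contained derivation that stays entirely within the toolkit the paper does develop: you expand $\det_{n\,(k+1)}(Y)$ via Corollary~\ref{cor:CullisAsSumDet}, apply the ordinary cofactor expansion of each $(k+1)\times(k+1)$ minor along the all-ones column, and re-index by the pair $(c,m)$ with $c = d\setminus\{m\}$. I checked the two sign identities that carry the whole proof: inserting $m$ into $c$ shifts the exponent $\sum_\alpha(d(\alpha)-\alpha)$ by exactly $m-k-1$, so $\sgn(c\cup\{m\}) = \sgn(c)(-1)^{m-k-1}$ as you claim; and for $[n]\setminus c = \{m_1<\cdots<m_{n-k}\}$ one indeed has $|\{i\in c: i<m_r\}| = m_r - r$, whence the term for $m_r$ collapses to $\sgn(c)(-1)^{r+1}$ and the inner sum telescopes to $1$ or $0$ according to the parity of $n-k$. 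Your closing remark is also right: the identity is a formal $\mathbb Z$-linear relation among the minors $\det_k(X[c|))$ with coefficients in $\{0,\pm1\}$, so no hypothesis on $|\F|$ or on the characteristic is needed, consistent with the paper stating the lemma without any field-size assumption. The only cosmetic point is that where you write ``the first or the second \emph{row}'' style slips do not occur here, but you should make explicit in a final write-up that Corollary~\ref{cor:CullisAsSumDet} applies to $Y$ because $n\ge k+1$ follows from the hypothesis $n>k$.
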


Recall the following general facts regarding polynomials over a field. 

\begin{lemma}[{Cf.~\cite[IV, \S 1, Corollary 1.8]{Lang2002}}]Let $\F$ be a finite field with $q$ elements. Let $f$ be a
polynomial in $n$ variables over $\F$ such that the degree of $f$ in each variable
is less than $q$. If $f$ defines the zero function on $\F^{n}$, then $f = 0$.
\end{lemma}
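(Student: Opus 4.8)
The plan is to prove this by induction on the number of variables $n$, reducing everything to the elementary one-variable fact that a nonzero polynomial of degree $d$ over a field has at most $d$ roots.

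For the base case $n = 1$, I would argue as follows. Let $f \in \F[x_1]$ with $\deg(f) < q$ define the zero function on $\F$. Then $f$ has every one of the $q$ elements of $\F$ as a root, so it has at least $q$ distinct roots. Since a nonzero one-variable polynomial over a field has at most $\deg(f) < q$ roots, $f$ cannot be nonzero; hence $f = 0$.

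For the inductive step, assume the claim holds for $n-1$ variables. Given $f \in \F[x_1, \ldots, x_n]$ with $\deg_{x_i}(f) < q$ for each $i$ and $f$ vanishing on all of $\F^n$, I would regroup $f$ as a polynomial in $x_n$ with coefficients in $\F[x_1, \ldots, x_{n-1}]$, writing
\[
f = \sum_{j=0}^{q-1} g_j(x_1, \ldots, x_{n-1})\, x_n^{\,j},
\]
where the upper limit $q-1$ is exactly the hypothesis $\deg_{x_n}(f) < q$, and each coefficient $g_j$ inherits the bound $\deg_{x_i}(g_j) < q$ for $1 \le i \le n-1$ from the corresponding bound on $f$. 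Now fix an arbitrary point $(a_1, \ldots, a_{n-1}) \in \F^{n-1}$ and consider the single-variable polynomial
\[
h(x_n) = f(a_1, \ldots, a_{n-1}, x_n) = \sum_{j=0}^{q-1} g_j(a_1, \ldots, a_{n-1})\, x_n^{\,j}.
\]
This $h$ has degree $< q$ and, since $f$ vanishes on all of $\F^n$, it vanishes on all of $\F$; by the base case $h = 0$, so every coefficient satisfies $g_j(a_1, \ldots, a_{n-1}) = 0$. As the point was arbitrary, each $g_j$ defines the zero function on $\F^{n-1}$, and since $\deg_{x_i}(g_j) < q$ the inductive hypothesis gives $g_j = 0$ for every $j$. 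Therefore $f = 0$, completing the induction.

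I do not expect a genuine obstacle here; the only point requiring care is bookkeeping of the per-variable degree bounds, namely verifying that viewing $f$ as an element of $\bigl(\F[x_1,\ldots,x_{n-1}]\bigr)[x_n]$ preserves both the bound $\deg_{x_n}(f) < q$ (controlling the range of $j$) and the bounds on the remaining variables (so that the inductive hypothesis applies to each $g_j$). It is worth remarking that the degree hypothesis cannot be dropped: the polynomial $x_1^{\,q} - x_1$ is nonzero yet vanishes identically on $\F$, which is precisely why we restrict to per-variable degree less than $q$.
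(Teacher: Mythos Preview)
Your proof is correct and is the standard induction-on-$n$ argument. The paper does not give its own proof of this lemma but simply cites Lang \cite[IV, \S 1, Corollary 1.8]{Lang2002}, whose proof is essentially the one you have written.
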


\begin{corollary}\label{DegFGLessEqual}Let $f, g$ be
polynomials in $n$ variables over $\F$ such that either $\F$ is infinite or the degree of $f$ and $g$ in each variable
is less than $|\F|$. If $f$ and $g$ define the same function on $\F^{n}$, then $f = g$.
\end{corollary}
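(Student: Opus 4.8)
The plan is to reduce everything to a single polynomial by setting $h = f - g$ and showing $h = 0$. Since $f$ and $g$ define the same function on $\F^n$, the polynomial $h$ defines the zero function on $\F^n$. Moreover, for each variable $x_i$ we have $\deg_{x_i}(h) \le \max\bigl(\deg_{x_i}(f), \deg_{x_i}(g)\bigr)$, so any per-variable degree bound assumed for both $f$ and $g$ is inherited by $h$. Thus it suffices to prove the following: if $h$ defines the zero function on $\F^n$, and either $\F$ is infinite or each per-variable degree of $h$ is less than $|\F|$, then $h = 0$.

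The natural split is according to the two cases in the hypothesis. In the finite case, where $|\F| = q$ and each per-variable degree of $h$ is less than $q$, the conclusion is exactly the statement of the preceding Lemma applied to $h$; so this case is immediate and requires no new work.

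For the infinite case the Lemma does not apply directly, so I would give a short separate argument, which is where the only genuine content lies. The clean route is induction on the number of variables $n$. The base case $n = 1$ is the classical fact that a nonzero univariate polynomial over a field has at most $\deg(h)$ roots, hence cannot vanish at all of the infinitely many points of $\F$; therefore a univariate $h$ vanishing identically must be zero. For the inductive step I would write $h$ as a polynomial in $x_n$ with coefficients in $\F[x_1, \ldots, x_{n-1}]$, namely $h = \sum_{j} c_j(x_1, \ldots, x_{n-1})\, x_n^{\,j}$. Fixing any point $(a_1, \ldots, a_{n-1}) \in \F^{n-1}$ and letting $x_n$ range over $\F$, the univariate polynomial $\sum_j c_j(a_1, \ldots, a_{n-1})\, x_n^{\,j}$ vanishes on all of $\F$, so by the base case each coefficient $c_j(a_1, \ldots, a_{n-1})$ is zero. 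Since $(a_1, \ldots, a_{n-1})$ was arbitrary, every $c_j$ defines the zero function on $\F^{n-1}$, and the inductive hypothesis forces $c_j = 0$ for all $j$, whence $h = 0$.

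I do not expect any real obstacle here: the finite case is a direct citation of the Lemma, and the infinite case is a standard vanishing argument. The only point requiring minor care is the degree bookkeeping for $h = f - g$, ensuring that the per-variable degree hypotheses transfer from $f$ and $g$ to their difference; this is routine since subtraction cannot increase any per-variable degree. Combining the two cases gives $h = 0$, i.e. $f = g$, as claimed.
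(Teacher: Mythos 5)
Your proof is correct and follows exactly the route the paper intends: the paper states this as an immediate corollary of the preceding lemma (applied to $f-g$), with the infinite case being the classical vanishing argument you spell out. Your degree bookkeeping for $h = f-g$ and the induction on $n$ for the infinite case are both sound, so nothing is missing.
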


In order to prove that every linear map preserving $\det_{n\,k}$ is invertible we use the notion and the properties of radical of a function on a vector space following~\cite{Waterhouse1983}.

\begin{definition}[Cf.~{\cite[text at the beginning of Section~1]{Waterhouse1983}}]\label{def:radical}Let $\F$ be a field, $V$ be a finite-dimensional vector space over $\F$, and let $f$ be a function from $V$ to $\F$. The \emph{radical} of $f$, denoted by $\rad (f)$ is a subset of $V$ defined by
\[
\rad (f) = \{\mathbf w \mid f(\mathbf v + \lambda \mathbf w) = f(\mathbf v)\;\;\mbox{for all}\;\; \mathbf v \in V,\;\; \lambda \in \F\}.
\]
\end{definition}

\begin{lemma}[Cf.~{\cite[Proposition~1]{Waterhouse1983}}]\label{lem:RadicalCriterion}
$\rad (f)$ is nonzero if and only if there is a noninvertible linear map $T\colon V\to V$ satisfying $f(T\mathbf v) = f(\mathbf v)$ for all $\mathbf v$ in $V$.
\end{lemma}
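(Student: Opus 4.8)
The plan is to prove both implications directly from Definition~\ref{def:radical}, the crux being the elementary observation that $\Ker T \subseteq \rad(f)$ whenever $T\colon V\to V$ satisfies $f(T\mathbf v)=f(\mathbf v)$ for all $\mathbf v\in V$. Indeed, if $\mathbf w\in\Ker T$, then for any $\mathbf v\in V$ and $\lambda\in\F$ linearity gives $T(\mathbf v+\lambda\mathbf w)=T\mathbf v+\lambda T\mathbf w=T\mathbf v$, so
\[
f(\mathbf v+\lambda\mathbf w)=f\bigl(T(\mathbf v+\lambda\mathbf w)\bigr)=f(T\mathbf v)=f(\mathbf v),
\]
which is exactly the condition for $\mathbf w$ to lie in $\rad(f)$.

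For the ``if'' direction I would argue as follows. Given a noninvertible linear map $T$ with $f\circ T=f$, the finite-dimensionality of $V$ forces $\Ker T\neq\{0\}$, so by the observation above $\rad(f)\supseteq\Ker T\neq\{0\}$, and we are done.

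For the ``only if'' direction I would construct the required map by hand. Pick $0\neq\mathbf w\in\rad(f)$, extend it to a basis $\mathbf w=\mathbf b_1,\mathbf b_2,\dots,\mathbf b_m$ of $V$, and let $\pi\colon V\to\F$ be the coordinate functional with $\pi(\mathbf b_1)=1$ and $\pi(\mathbf b_i)=0$ for $i\ge 2$. Define the linear map $T\mathbf v=\mathbf v-\pi(\mathbf v)\mathbf w$, i.e.\ the projection along $\Span(\mathbf w)$ onto $\Span(\mathbf b_2,\dots,\mathbf b_m)$. Then $\mathbf w\in\Ker T$, so $T$ is noninvertible, and since $\mathbf w\in\rad(f)$, applying the defining property of the radical to the vector $\mathbf v-\pi(\mathbf v)\mathbf w$ with scalar $\pi(\mathbf v)$ yields $f(T\mathbf v)=f\bigl(\mathbf v-\pi(\mathbf v)\mathbf w\bigr)=f(\mathbf v)$ for every $\mathbf v\in V$. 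This exhibits the desired noninvertible preserver of $f$.

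I do not anticipate a genuine obstacle; the only subtlety worth flagging is that ``noninvertible'' must be read in its finite-dimensional sense, so that it is equivalent to $\Ker T\neq\{0\}$ — and this is precisely what the hypothesis of Definition~\ref{def:radical} supplies.
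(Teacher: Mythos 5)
Your proof is correct. The paper does not supply its own argument for this lemma --- it is imported verbatim from Waterhouse's Proposition~1 --- and your two-step reasoning ($\Ker T \subseteq \rad(f)$ for any noninvertible preserver $T$, and conversely the projection $\mathbf v \mapsto \mathbf v - \pi(\mathbf v)\mathbf w$ along a nonzero radical vector $\mathbf w$ gives a noninvertible preserver) is precisely the standard proof of that result, with no gaps.
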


\section{Linear $det_{n\,k}$-preservers of the form $X \mapsto AXB$}

The common type of $\det_{n\,k}$-preservers can be obtained by the two-sided matrix multiplication as it follows from Lemma~{\ref{lem:TwoSidedMulPreservesDet}} below. 

\begin{lemma}\label{lem:TwoSidedMulPreservesDet}Let $n \ge k$, $A \in \M_{n\,n}(\F)$, $B \in \M_{k\,k}(\F)$, and let $T\colon \M_{n\,k}(\F) \to \M_{n\,k}(\F)$ be a linear map defined by
\begin{equation}\label{eq:TwoSideMul}
T(X) = AXB
\end{equation}
for all $X \in \M_{n\,k}(\F).$ Then
\[
\det_{n\,k} (T(X)) = \det_{n\,k}(X)
\]
for all $X \in \M_{n\,k} (\F)$ if and only if
\begin{equation}\label{eq:TwoSideMul2}
\det_{n\,k} \Bigl(A(|d]\Bigr)\cdot \det_k \Bigl(B\Bigr) =  \sgn (d)
\end{equation}
for all $d \in \binom{[n]}{k}.$
\end{lemma}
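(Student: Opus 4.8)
The plan is to compute $\det_{n\,k}(AXB)$ explicitly and compare it to $\det_{n\,k}(X)$ as a polynomial identity in the entries of $X$. First I would apply Corollary~\ref{cor:rightmatrixmult} to peel off the right factor: for every $X$ we have $\det_{n\,k}(AXB) = \det_{n\,k}(AX)\cdot \det_k(B)$, since $AX \in \M_{n\,k}(\F)$ and $B \in \M_{k\,k}(\F)$. So it suffices to understand $\det_{n\,k}(AX)$. For this I would use Lemma~\ref{lem:DetNKOfMatrixProduct} with $l = k$, viewing the product $AX$ of $A \in \M_{n\,n}(\F)$ and $X \in \M_{n\,k}(\F)$; this gives
\[
\det_{n\,k}(AX) = \sum_{d \in \binom{[n]}{k}} \det_{n\,k}\Bigl(A(|d]\Bigr)\cdot \det_k\Bigl(X[d|)\Bigr).
\]
Combining the two, $\det_{n\,k}(AXB) = \sum_{d \in \binom{[n]}{k}} \det_{n\,k}(A(|d])\,\det_k(B)\,\det_k(X[d|))$.

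Next I would compare this with the expression for $\det_{n\,k}(X)$ furnished by Corollary~\ref{cor:CullisAsSumDet}, namely $\det_{n\,k}(X) = \sum_{c \in \binom{[n]}{k}} \sgn(c)\,\det_k(X[c|))$. Thus the condition $\det_{n\,k}(AXB) = \det_{n\,k}(X)$ for all $X$ is equivalent to
\[
\sum_{d \in \binom{[n]}{k}} \Bigl(\det_{n\,k}(A(|d])\,\det_k(B) - \sgn(d)\Bigr)\det_k\Bigl(X[d|)\Bigr) = 0
\]
for all $X \in \M_{n\,k}(\F)$. The "if" direction is then immediate: if \eqref{eq:TwoSideMul2} holds, every coefficient vanishes. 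For the "only if" direction I must argue that the functions $X \mapsto \det_k(X[d|))$, as $d$ ranges over $\binom{[n]}{k}$, are linearly independent over $\F$, so that the vanishing of the linear combination forces each coefficient $\det_{n\,k}(A(|d])\det_k(B) - \sgn(d)$ to be zero. Linear independence is seen by evaluation: for a fixed $d = \{i_1 < \dots < i_k\}$, take $X$ to be the matrix whose rows indexed by $i_1,\dots,i_k$ form the identity $I_k$ and whose other rows are zero; then $\det_k(X[d|)) = 1$ while $\det_k(X[c|)) = 0$ for every other $c$ (such a $c$ omits some $i_\alpha$ and hence $X[c|)$ has a zero row). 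This isolates the $d$-th coefficient.

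One subtlety is the passage from "the polynomial in the entries of $X$ vanishes as a function" to "it vanishes as a polynomial," which over a small finite field requires the degree-in-each-variable bound of Corollary~\ref{DegFGLessEqual}; but here I avoid this issue entirely, since the linear-independence/evaluation argument above works directly at the level of functions — I simply plug in the explicit matrices $X$ constructed above and read off the coefficients, no polynomial identity theorem needed. (If one preferred the polynomial route, note each side has degree $1$ in each entry of $X$, well within $|\F| > k \ge 2$, so Corollary~\ref{DegFGLessEqual} applies; but it is unnecessary.) Finally I would rewrite $\sgn(d)$ in the statement: $\sgn(d) = \sgn_{[n]}(d) = (-1)^{\sum_\alpha (d(\alpha) - \alpha)}$, which is exactly the exponent $(-1)^{i_1 + \dots + i_k - 1 - \dots - k}$ appearing in the main theorem, so \eqref{eq:TwoSideMul2} matches the condition stated there. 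The main obstacle, such as it is, is purely bookkeeping: making sure Lemma~\ref{lem:DetNKOfMatrixProduct} is applied with the correct index conventions (the sum is over $k$-subsets $d$ of the columns of $A$, i.e. $A(|d]$ keeps the columns in $d$), and that the evaluation matrices genuinely isolate one minor at a time.
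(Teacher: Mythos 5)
Your proposal is correct and follows essentially the same route as the paper: both use Corollary~\ref{cor:rightmatrixmult} and Lemma~\ref{lem:DetNKOfMatrixProduct} to expand $\det_{n\,k}(AXB)$ against Corollary~\ref{cor:CullisAsSumDet}, and both establish the converse by evaluating at the matrices $X^{(d)}=\sum_j E_{i_j\,j}$ (your "identity rows indexed by $d$" matrices), which isolate one basic minor at a time without any need for a polynomial identity theorem. The only cosmetic difference is that you package both directions as one coefficient-comparison identity, whereas the paper proves necessity by computing $\det_{n\,k}(X^{(d)})$ and $\det_{n\,k}(AX^{(d)}B)$ directly.
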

\begin{proof}
Assume that $A \in \M_{n\,n}(\F), B \in \M_{k\,k}(\F)$ are the matrices satisfying the condition~\eqref{eq:TwoSideMul2}. Therefore, \begin{equation}\label{TwoSideMul:eq:1}
\det_{n\,k} (T(X)) = \det_{n\,k}(AXB) = \det_{n\,k}((AX)B).
\end{equation}
Since the triple $(k,k,n)$ satisfies the inequalities $1 \le k \le k \le n$ and $AX \in \M_{n\,k}(\F), B \in \M_{k\,k}(\F),$  then 
\begin{equation}\label{TwoSideMul:eq:2}
\det_{n\,k}((AX)B) = \det_{n\,k}(AX)\det_k(B)
\end{equation}
by Corollary~\ref{cor:rightmatrixmult}. Similarly, since the triple $(k,n,n)$ satisfies the inequalities $1 \le k \le n \le n$ and $A \in \M_{n\,n}(\F), X \in \M_{n\,k}(\F),$ then by Lemma~\ref{lem:DetNKOfMatrixProduct} we have
\begin{equation}\label{TwoSideMul:eq:3}
\det_{n\,k}(AX)\det_k (B) = \left(\sum_{d \in \binom{[n]}{k}} \det_{n\,k}\Bigl(A(|d]\Bigr)\det_k\Bigl(X[d|)\Bigr)\right) \det_k \Bigl(B\Bigr).
\end{equation}
Hence,
\begin{multline}\label{TwoSideMul:eq:4}
\left(\sum_{d \in \binom{[n]}{k}} \det_{n\,k}\Bigl(A(|d]\Bigr)\det_k\Bigl(X[d|)\Bigr)\right) \det_k\Bigl(B\Bigr)\\
 = \sum_{d \in \binom{[n]}{k}} \left(\det_{n\,k}\Bigl(A(|d]\Bigr) \det_k\Bigl(B\Bigr)\right) \cdot \det_k\Bigl(X[d|)\Bigr).
\end{multline}
The condition of the lemma implies that
\begin{multline}\label{TwoSideMul:eq:5}
\sum_{d \in \binom{[n]}{k}} \left(\det_{n\,k}\Bigl(A(|d])\Bigr) \det_k(B)\right) \cdot \det_k\Bigl(X(|d]\Bigr)\\
 = \sum_{d \in \binom{[n]}{k}} \left( \sgn_{[n]}(d)\right) \cdot \det_k\Bigl(X[d|)\Bigr).
\end{multline}
By Corollary~{\ref{lem:DetNKAsSumProj}},
\begin{equation}\label{TwoSideMul:eq:7}
\sum_{d \in \binom{[n]}{k}} \sgn_{[n]}(d)\cdot \det_k\Bigl(X[d|)\Bigr) = \det_{n\,k} (X).
\end{equation}
Thus, by aligning the equalities~{\eqref{TwoSideMul:eq:1}}--{\eqref{TwoSideMul:eq:7}} together, we obtain that
\[
\det_{n\,k} (T(X)) = \det_{n\,k} (X).
\]

To prove the converse, assume that $A \in \M_{n\,n}(\F)$  $B \in \M_{k\,k}(\F)$ are such that $\det_{n\,k}(AXB) = \det_{n\,k}(X)$ for all $X \in \M_{n\,k}(\F)$. Let $d \in \binom{[n]}{k}$ and $X^{(d)}$ be defined by
$
X^{(d)} = \sum\limits_{j=1}^{k} E_{i_j\,j},
$
where $1 \le i_1 < \ldots < i_k \le n$ and $\{i_1,\ldots, i_k\} = d$.

Consider the expansion of $\det_{n\,k}(X^{(d)})$ by the definition of the Cullis' determinant.
\begin{equation*}
\det_{n\,k}(X^{(d)}) =  \sum_{\sigma \in \mathcal C_{k}^{[n]}} \sgn_{n\,k} (\sigma) \cdot x^{(d)}_{\sigma(1)\,1}\cdot x^{(d)}_{\sigma(2)\,2} \cdot \ldots \cdot x^{(d)}_{\sigma(k)\,k}.
\end{equation*}
On the one hand, 
\[
\det_{n\,k}(X^{(d)})  = \sgn_{n\,k} (\sigma_0) \cdot 1 \cdot 1 \cdot \ldots \cdot 1 = \sgn_{n\,k} (\sigma_0),
\]
where $\sigma_0(j) = i_j$ for all $1 \le j \le k$, because $x^{(d)}_{\sigma(1)\,1}\cdot x^{(d)}_{\sigma(2)\,2} \cdot \ldots \cdot x^{(d)}_{\sigma(k)\,k} \neq 0$ if and only if $\sigma(j) = i_j$ for all $1 \le j \le k$. The definition of $\sgn_{n\,k}$ implies that
\[
\sgn_{n\,k} (\sigma_0) = \sgn_{[n]}(\{i_1, \ldots, i_j\}) = \sgn_{[n]}(d).
\]
Thus,
\begin{equation}\label{TwoSideMul:eq:8}
\det_{n\,k}(X^{(d)}) = \sgn_{[n]}(d).
\end{equation}

On the other hand, it follows from the definition of $X^{(d)}$ that
\[
AX^{(d)}B = \Bigl(AX^{(d)}\Bigr)B = \Bigl(A(|d]\Bigr)B
\]
and consequently
\begin{equation}\label{TwoSideMul:eq:9}
\det_{n\,k}\Bigl(AX^{(d)}B\Bigr) = \det_{n\,k} \Bigl(A(|d]B\Bigr) = \det_{n\,k} \Bigl(A(|d]\Bigr)\det_k (B)
\end{equation}
by Corollary~\ref{cor:rightmatrixmult}. Since we assumed that $\det_{n\,k}(AXB) = \det_{n\,k}(X)$ for all $X \in \M_{n\,k}(\F)$, then we conclude that the equalities~\eqref{TwoSideMul:eq:8} and~\eqref{TwoSideMul:eq:9} could be aligned together which implies that
\begin{equation}\label{TwoSideMul:eq:10}
\det_{n\,k} \Bigl(A(|d]\Bigr)\det_k (B) = \det_{n\,k}(AX^{(d)}B) = \det_{n\,k}(X^{(d)}) = \sgn_{[n]}(d).
\end{equation}
This means that $A$ and $B$ satisfy the condition~\eqref{eq:TwoSideMul2} because the equality~\eqref{TwoSideMul:eq:10} holds for all $d \in \binom{[n]}{k}$.
\end{proof}

\section{The radical of $\det_{n\,k}$ if $n + k$ is even}\label{sec:kernels}

\begin{lemma}\label{lem:DetNKXiEqX1}Assume that $n \ge k + 2$ and $n+k$ is even. Let $x_1, \ldots, x_n \in \F$ and
\[
X = \begin{pmatrix}
x_1 & 0 & 0 & \cdots & 0\\
x_2 & 1 & 0 &\cdots & 1\\
x_3 & 0 & 1 &\cdots & 1\\
\vdots & \vdots & \vdots & \ddots & \vdots\\
x_k & 0 & 0 & \cdots & 1\\
\vdots & \vdots & \vdots & \ddots & \vdots\\
x_{n} & 0 & 0 & \cdots & 1
\end{pmatrix} \in \M_{n\,k}(\F).
\]
Then
\[
\det_{n\,k}(X) = x_1.
\]
\end{lemma}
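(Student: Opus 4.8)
The plan is to evaluate $\det_{n\,k}(X)$ directly from Corollary~\ref{cor:CullisAsSumDet}, using the rigid column structure of $X$: its columns are, in order, $\mathbf x=(x_1,\dots,x_n)^t$, the standard basis vectors $e_2,\dots,e_{k-1}$ of $\F^n$, and the vector $\mathbf 1-e_1$ (all $1$'s except a $0$ in the first coordinate). Writing $\det_{n\,k}(X)=\sum_{c\in\binom{[n]}{k}}\sgn_{[n]}(c)\det_k(X[c|))$, the fact that columns $2,\dots,k-1$ are $e_2,\dots,e_{k-1}$ forces $\det_k(X[c|))=0$ unless $\{2,\dots,k-1\}\subseteq c$; so only the sets $c=\{2,\dots,k-1\}\cup\{a,b\}$ with $a<b$ and $a,b\in\{1\}\cup\{k,k+1,\dots,n\}$ contribute. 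This splits the sum into the terms with $a=1$ and the terms with $k\le a<b\le n$, which I would handle separately.

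For $a=1$, i.e.\ $c=\{1,2,\dots,k-1,b\}$ with $k\le b\le n$, the square matrix $X[c|)$ has first row $(x_1,0,\dots,0)$, so expanding the ordinary determinant along that row gives $x_1$ times the determinant of an upper triangular matrix with $1$'s on the diagonal, whence $\det_k(X[c|))=x_1$; evaluating $\sum_\alpha(c(\alpha)-\alpha)$ gives $\sgn_{[n]}(c)=(-1)^{b-k}$. Summing over $b=k,\dots,n$ yields $x_1\sum_{t=0}^{n-k}(-1)^t$, and since $n+k$ --- hence $n-k$ --- is even, this alternating sum has an odd number of terms and equals $1$; so the first family contributes exactly $x_1$. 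For $k\le a<b\le n$, i.e.\ $c=\{2,\dots,k-1,a,b\}$, the square matrix $X[c|)$ has columns $2,\dots,k-1$ equal to $e_1,\dots,e_{k-2}\in\F^k$ and last column $\mathbf 1$; expanding the determinant successively along those $k-2$ basis columns leaves the $2\times2$ minor on rows $a,b$ and columns $1,k$, equal to $x_a-x_b$, and the accumulated Laplace sign combined with $\sgn_{[n]}(c)$ is (a short check) precisely the sign $\sgn_{[n-k+1]}$ that arises when Corollary~\ref{cor:CullisAsSumDet} is applied to the $(n-k+1)\times2$ matrix with rows $(x_k,1),(x_{k+1},1),\dots,(x_n,1)$. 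Hence the total contribution of the second family equals $\det_{(n-k+1)\,2}\bigl((x_k,\dots,x_n)^t\mid\mathbf 1\bigr)$, which vanishes by Lemma~\ref{lem:NAKAGAMILEMMA20} applied with parameters $(n-k+1,1)$ --- legitimate since $n-k+1>1$ and $(n-k+1)+1=n-k+2$ is even. Adding the two families gives $\det_{n\,k}(X)=x_1+0=x_1$.

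I expect the only slightly delicate part to be the sign bookkeeping: checking that $X[c|)$ really is triangular with the claimed diagonal in the first family, computing both occurrences of $\sgn_{[n]}(c)$, and verifying that the net sign identifying the second family with $\det_{(n-k+1)\,2}\bigl((x_k,\dots,x_n)^t\mid\mathbf 1\bigr)$ is $+1$; none of this uses division, so the argument is characteristic-free. The genuinely load-bearing input is the parity hypothesis, used twice --- to make the alternating sum in the first family equal $1$, and to make Lemma~\ref{lem:NAKAGAMILEMMA20} annihilate the second family. If one prefers not to invoke Lemma~\ref{lem:NAKAGAMILEMMA20}, the second-family sum can instead be computed directly: the coefficient of each $x_m$ turns out to be an alternating sum of an even number of $\pm1$'s, hence $0$.
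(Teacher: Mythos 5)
Your proof is correct, but it is organized differently from the paper's. The paper works column-wise: it writes the last column $(0,1,\dots,1)^t$ as $\mathbf 1-e_1$ and uses multilinearity to split $\det_{n\,k}(X)$ into two Cullis determinants, reduces the first via Lemma~\ref{lem:NAKAGAMILEMMA20} (dropping the all-ones column, since $n+(k-1)$ is odd) and iterated Laplace expansion to $x_1-\sum_{i=k}^{n}(-1)^{i-k}x_i$, reduces the second by iterated Laplace expansion to $-\sum_{i=k}^{n}(-1)^{i-k}x_i$, and lets the two alternating sums cancel. You instead work row-wise from the minor expansion of Corollary~\ref{cor:CullisAsSumDet}: the basis columns $e_2,\dots,e_{k-1}$ force $\{2,\dots,k-1\}\subseteq c$, the sets with $1\in c$ contribute $x_1\sum_{t=0}^{n-k}(-1)^t=x_1$ (parity used here), and the sets with $1\notin c$ assemble, after a sign check that I verified, into $\det_{(n-k+1)\,2}\bigl((x_k,\dots,x_n)^t\mid\mathbf 1\bigr)$, which vanishes by Lemma~\ref{lem:NAKAGAMILEMMA20} (parity used again). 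Both arguments invoke the same auxiliary lemma and use the evenness of $n+k$ twice; the paper's version cancels two explicitly computed alternating sums against each other, whereas yours isolates the answer $x_1$ in one family and kills the entire second family at once, which arguably makes the role of the parity hypothesis more transparent. Your closing remark that the second family can also be killed by a direct coefficient count (each $x_m$ appearing with an alternating sum of an even number of signs) is a valid characteristic-free alternative to citing Lemma~\ref{lem:NAKAGAMILEMMA20}.
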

\begin{proof}
The multilinearity of the Cullis' determinant along the last column implies that
\begin{equation}\label{lem:DetNKXiEqX1:eq1}
\begin{vmatrix}
x_1 & 0 & 0 & \cdots & 0\\
x_2 & 1 & 0 &\cdots & 1\\
x_3 & 0 & 1 &\cdots & 1\\
\vdots & \vdots & \vdots & \ddots & \vdots\\
x_k & 0 & 0 & \cdots & 1\\
\vdots & \vdots & \vdots & \ddots & \vdots\\
x_{n} & 0 & 0 & \cdots & 1
\end{vmatrix}_{n\,k} =
\begin{vmatrix}
x_1 & 0 & 0 & \cdots & 1\\
x_2 & 1 & 0 &\cdots & 1\\
x_3 & 0 & 1 &\cdots & 1\\
\vdots & \vdots & \vdots & \ddots & \vdots\\
x_k & 0 & 0 & \cdots & 1\\
\vdots & \vdots & \vdots & \ddots & \vdots\\
x_{n} & 0 & 0 & \cdots & 1
\end{vmatrix}_{n\,k}
-
\begin{vmatrix}
x_1 & 0 & 0 & \cdots & 1\\
x_2 & 1 & 0 &\cdots & 0\\
x_3 & 0 & 1 &\cdots & 0\\
\vdots & \vdots & \vdots & \ddots & \vdots\\
x_k & 0 & 0 & \cdots & 0\\
\vdots & \vdots & \vdots & \ddots & \vdots\\
x_{n} & 0 & 0 & \cdots & 0
\end{vmatrix}_{n\,k}.
\end{equation}

Consider the first term in the difference~\eqref{lem:DetNKXiEqX1:eq1}. Lemma~\ref{lem:NAKAGAMILEMMA20} implies that
\[
\begin{vmatrix}
x_1 & 0 & 0 & \cdots & 0 & 1\\
x_2 & 1 & 0 &\cdots & 0 & 1\\
x_3 & 0 & 1 &\cdots & 0 & 1\\
\vdots & \vdots & \vdots & \ddots & \vdots & \vdots\\
x_{k-1} & 0 & 0 & \cdots & 1 & 1\\
x_k & 0 & 0 & \cdots & 0& 1\\
\vdots & \vdots & \vdots & \ddots & \vdots & \vdots\\
x_{n} & 0 & 0 & \cdots & 0 & 1
\end{vmatrix}_{n\,k} = \begin{vmatrix}
x_1 & 0 & 0 & \cdots & 0\\
x_2 & 1 & 0 &\cdots & 0\\
x_3 & 0 & 1 &\cdots & 0\\
\vdots & \vdots & \vdots & \ddots & \vdots\\
x_{k-1} & 0 & 0 & \cdots & 1\\
x_k & 0 & 0 & \cdots & 0\\
\vdots & \vdots & \vdots & \ddots & \vdots\\
x_{n} & 0 & 0 & \cdots & 0
\end{vmatrix}_{n\,(k-1)}
\]
because $n - k - 1$ is odd. Then, using the Laplace expansion along the last column $k-2$ times we obtain that
\begin{equation}\label{lem:DetNKXiEqX1:eq2}
\begin{vmatrix}
x_1 & 0 & 0 & \cdots & 0\\
x_2 & 1 & 0 &\cdots & 0\\
x_3 & 0 & 1 &\cdots & 0\\
\vdots & \vdots & \vdots & \ddots & \vdots\\
x_{k-1} & 0 & 0 & \cdots & 1\\
x_k & 0 & 0 & \cdots & 0\\
\vdots & \vdots & \vdots & \ddots & \vdots\\
x_{n} & 0 & 0 & \cdots & 0
\end{vmatrix}_{n\,(k-1)} = \begin{vmatrix}
x_1\\
x_k\\
\vdots \\
x_{n}
\end{vmatrix}_{(n-k+2)\,1} = x_1 - \sum_{i = k}^{n} (-1)^{i-k}x_i.
\end{equation}

Next, consider the second term in the difference~\eqref{lem:DetNKXiEqX1:eq1}. Using the Laplace expansion along the last column $k-1$ times we obtain that
\begin{multline}\label{lem:DetNKXiEqX1:eq3}
\begin{vmatrix}
x_1 & 0 & 0 & \cdots & 1\\
x_2 & 1 & 0 &\cdots & 0\\
x_3 & 0 & 1 &\cdots & 0\\
\vdots & \vdots & \vdots & \ddots & \vdots\\
x_k & 0 & 0 & \cdots & 0\\
\vdots & \vdots & \vdots & \ddots & \vdots\\
x_{n} & 0 & 0 & \cdots & 0
\end{vmatrix}_{n\,k} = (-1)^{k+1 + k-1 + k-2 + \ldots + 2 + 1} \begin{vmatrix}
x_k\\
\vdots \\
x_{n}
\end{vmatrix}_{(n-k+1)\,1}\\
 = -\begin{vmatrix}
x_k\\
\vdots \\
x_{n}
\end{vmatrix}_{(n-k+1)\,1}
 = -\sum_{i = k}^{n} (-1)^{i-k}x_i.
\end{multline}
By substituting of~\eqref{lem:DetNKXiEqX1:eq2} and~\eqref{lem:DetNKXiEqX1:eq3} to~\eqref{lem:DetNKXiEqX1:eq1} we finally obtain that
\[
\det_{n\,k}(X) = x_1 - \sum_{i = k}^{n} (-1)^{i-k}x_i - \left(-\sum_{i = k}^{n} (-1)^{i-k}x_i\right) = x_1.
\]
\end{proof}

\begin{lemma}\label{lem:NKEvenAdditionPreserverHasZero11}Assume that $n \ge k$, $|\F| > k$ and $n + k$ is even. If $Y = (y_{i\,j}) \in \M_{n\,k} (\F)$ is such that
\begin{equation}\label{lem:NKEvenAdditionPreserverHasZero11:cond}
\det_{n\,k} (A + \lambda Y) = \det_{n\,k} (A)
\end{equation}
for all $A \in \M_{n\,k}(\F)$ and $\lambda \in \F$, then $y_{1\,1} = 0$.
\end{lemma}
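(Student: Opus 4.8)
The plan is to exploit Lemma~\ref{lem:DetNKXiEqX1}, which exhibits a concrete matrix whose Cullis' determinant equals its $(1,1)$-entry $x_1$. The idea is that $\rad(\det_{n\,k})$ — equivalently the set of $Y$ satisfying the hypothesis~\eqref{lem:NKEvenAdditionPreserverHasZero11:cond} — must be invariant under the ``column operations'' that leave $\det_{n\,k}$ unchanged (adding a multiple of one column to another, rescaling, etc.) and also under the semi-cyclic row shifts of Lemma~\ref{lem:PermKNEvenSemicyclic}, since all of these can be realized by composing the addition map $A \mapsto A + \lambda Y$ with $\det_{n\,k}$-preserving linear maps. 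So the strategy is: suppose $y_{1\,1} \ne 0$ and derive a contradiction by feeding into~\eqref{lem:NKEvenAdditionPreserverHasZero11:cond} a well-chosen matrix $A$ built so that $A + \lambda Y$ (for a suitable scalar $\lambda$) looks like the matrix of Lemma~\ref{lem:DetNKXiEqX1}, forcing its Cullis' determinant to be $x_1 = (\text{something})+\lambda y_{1\,1}$, a nonconstant function of $\lambda$.

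Concretely, first I would set $\lambda \ne 0$ arbitrary and choose $A$ so that $A + \lambda Y$ has its columns $2, \ldots, k$ equal to $e_2, e_3, \ldots, e_{k-1}$ and the last column equal to $(0,1,1,\ldots,1)^t$, i.e. so that all of $A+\lambda Y$ except its first column matches the template of Lemma~\ref{lem:DetNKXiEqX1}; note that doing this only constrains columns $2$ through $k$ of $A$, which is harmless since we get to pick $A$. Then by Lemma~\ref{lem:DetNKXiEqX1}, $\det_{n\,k}(A + \lambda Y)$ equals the $(1,1)$-entry of $A + \lambda Y$, which is $a_{1\,1} + \lambda y_{1\,1}$. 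On the other hand the hypothesis says this equals $\det_{n\,k}(A)$, a constant independent of $\lambda$. Taking $\lambda = 0$ (or comparing two values of $\lambda$) gives $\lambda y_{1\,1} = 0$ for all $\lambda$, hence $y_{1\,1} = 0$. The role of the assumption $|\F| > k$ is to let us treat $\det_{n\,k}(A + \lambda Y)$ as an honest polynomial identity in $\lambda$ via Corollary~\ref{DegFGLessEqual}, so that ``constant for all $\lambda \in \F$'' really forces the coefficient of $\lambda$ to vanish; and $n \ge k+2$ is exactly what makes the template in Lemma~\ref{lem:DetNKXiEqX1} available.

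The main subtlety — and the step I would be most careful about — is whether columns $2,\ldots,k$ of $A+\lambda Y$ can simultaneously be forced into the required form for \emph{one fixed} $\lambda$ while keeping $A$ free to vary; this is immediate (just set $a_{i\,j} = \delta_{i\,j'} - \lambda y_{i\,j}$ appropriately for $j \ge 2$), but one must then double-check that the first column of $A$ — whose first entry is the only thing the argument reads off — remains genuinely free, so that $a_{1\,1}$ can be taken to be, say, $0$, and that no hidden dependence on $\lambda$ sneaks into $\det_{n\,k}(A)$ through the forced columns. A clean way to sidestep this is: fix the last $k-1$ columns of $A$ to be $e_2 - \lambda y_{\cdot 2},\ \ldots$ — wait, that reintroduces $\lambda$-dependence into $A$. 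The honest fix is to run the argument at two specific scalars $\lambda_1 \ne \lambda_2$ with the \emph{same} $A$: choose $A$ so that columns $2,\ldots,k$ of $A$ already equal $e_2,\ldots,e_{k-1},(0,1,\ldots,1)^t$ and additionally $Y$ has zero entries in columns $2, \ldots, k$ — but we cannot assume that about $Y$. So instead I would invoke the polynomial-identity viewpoint directly: by Corollary~\ref{cor:CullisBinomialExpansion}, $\det_{n\,k}(A+\lambda Y)$ is a polynomial in $\lambda$ of degree $\le k < |\F|$ whose value is constant, hence it is the constant polynomial; now pick $A$ with columns $2,\ldots,k$ equal to the template and first column $0$, so that the polynomial identity of Lemma~\ref{lem:DetNKXiEqX1} (which is itself a polynomial identity in the entries, valid over any field since $\deg \le k < |\F|$, cf.~Corollary~\ref{DegFGLessEqual}) applies to $A + \lambda Y$ only \emph{after} we also kill columns $2,\ldots,k$ of $\lambda Y$. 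That last point shows the argument really needs one more observation — that $y_{i\,j} = 0$ for the relevant $(i,j)$, or a reduction thereof — which is presumably established separately or handled by first applying column operations; I would therefore restructure around proving the stronger statement that the entire first column of $Y$ relevant to the template vanishes, and extract $y_{1\,1} = 0$ as the special case, using invariance of the radical under the column and row operations listed above to move an arbitrary nonzero entry into the $(1,1)$ slot.
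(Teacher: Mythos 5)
Your proposal circles all the right ingredients --- the matrix $A$ with zero first column and columns $2,\ldots,k$ matching the template of Lemma~\ref{lem:DetNKXiEqX1}, the polynomial viewpoint via Corollary~\ref{DegFGLessEqual}, and Lemma~\ref{lem:DetNKXiEqX1} itself --- but never assembles them into a proof, and you concede as much at the end. The concrete gap: you try to apply Lemma~\ref{lem:DetNKXiEqX1} to the \emph{whole} matrix $A+\lambda Y$, which would indeed require killing columns $2,\ldots,k$ of $Y$, something you cannot arrange. The missing idea is that one should never view $A+\lambda Y$ as a single matrix. Instead, expand $\det_{n\,k}(A+\lambda Y)$ multilinearly in the columns (Corollary~\ref{cor:CullisBinomialExpansion}); the coefficient of $\lambda^{1}$ is
\[
a_1=\sum_{i=1}^{k}\det_{n\,k}\Bigl(A(|1]\Big|\cdots\Big|Y(|i]\Big|\cdots\Big|A(|k]\Bigr),
\]
a sum of Cullis determinants in each of which \emph{exactly one} column of $A$ is replaced by the corresponding column of $Y$. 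Since $A(|1]=0$, every summand with $i\ge 2$ still contains the zero first column and vanishes; only the $i=1$ term survives, and that term is $\det_{n\,k}\bigl(Y(|1]\,\big|\,A(|2]\,\big|\cdots\big|\,A(|k]\bigr)$, whose columns $2,\ldots,k$ come from $A$, not from $A+\lambda Y$. This matrix is precisely the template of Lemma~\ref{lem:DetNKXiEqX1} with $x_i=y_{i\,1}$, so the surviving term equals $y_{1\,1}$. Since $|\F|>k$ forces $\det_{n\,k}(A+\lambda Y)$ to be the constant polynomial in $\F[\lambda]$, we get $a_1=y_{1\,1}=0$, with no information about the other columns of $Y$ ever needed.

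This is exactly how the paper argues. Your concluding suggestion to ``restructure around proving the stronger statement'' that other entries of $Y$ vanish, by moving entries around with row and column operations, is not needed for this lemma; that reduction is the content of the \emph{next} step in the paper (Lemma~\ref{lem:NKEvenAdditionPreserverIsZero}), which uses the maps $\SCS_{i\,j}$ to transport an arbitrary entry into position $(1,1)$ and then invokes the present lemma as a black box.
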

\begin{proof}
Assume that $Y$ satisfies the condition~\eqref{lem:NKEvenAdditionPreserverHasZero11:cond}. Let $A \in \M_{n\,k}(\F)$ be defined by
\[
A = E_{2\,2} + E_{3\,3} + \ldots + E_{k-1\,k-1} + E_{2\,k} + \ldots + E_{n\,k}  = \begin{pmatrix}
0 & 0 & 0 & \cdots & 0\\
0 & 1 & 0 &\cdots & 1\\
0 & 0 & 1 &\cdots & 1\\
\vdots & \vdots & \vdots & \ddots & \vdots\\
0 & 0 & 0 & \cdots & 1\\
\vdots & \vdots & \vdots & \ddots & \vdots\\
0 & 0 & 0 & \cdots & 1
\end{pmatrix}.
\]

Let $P \in \F[\lambda]$ be a polynomial defined by
 \[P = \det_{n\, k} (A + \lambda Y) = a_0 + a_1\lambda + \ldots + a_k \lambda^k,
  \]
On the one hand, the condition~\eqref{lem:NKEvenAdditionPreserverHasZero11:cond} implies that
\[
a_0 + a_1\lambda + \ldots + a_k \lambda^k = \det_{n\,k} (A)
\]
for all $\lambda \in \F$. Therefore, 
\begin{equation}\label{lem:NKEvenAdditionPreserverHasZero11:eq3}
a_1 = \ldots = a_k = 0
\end{equation}
 by Corollary~\ref{DegFGLessEqual} because  $|\F| > k$.

On the other hand, \eqref{eq:CullisBinomialExpansion} implies that
\[
a_1 = \sum_{1 \le i_1 \le k} \det_{n\,k}\Bigl(A(|1]\Big|\ldots \Big| Y(|i_1] \Big| \ldots \Big| A(|k] \Bigr)
\]
being a coefficient of $\lambda$ in expansion~\eqref{eq:CullisBinomialExpansion}. Since $A(|1]$ is a zero column, we conclude by splitting off the first term that
\begin{multline*}
a_1 = \sum_{1 \le i_1 \le k} \det_{n\,k}\Bigl(A(|1]\Big|\ldots \Big| Y(|i_1] \Big| \ldots \Big| A(|k] \Bigr)\\
 = \det_{n\,k} \Bigl(Y(|1] \Big| A(|2] \Big| \ldots \Big| A(|k]\Bigr)  + \sum_{2 \le i_1 \le k} \det_{n\,k}\Bigl(A(|1]\Big|\ldots \Big| Y(|i_1] \Big| \ldots \Big| A(|k] \Bigr)\\
 = \det_{n\,k} \Bigl(Y(|1] \Big| A(|2] \Big| \ldots \Big| A(|k]\Bigr)  + \sum_{2 \le i_1 \le k} \det_{n\,k}\Bigl(0\Big|\ldots \Big| Y(|i_1] \Big| \ldots \Big| A(|k] \Bigr)\\
 =  \det_{n\,k} \Bigl(Y(|1] \Big| A(|2] \Big| \ldots \Big| A(|k]\Bigr) + \sum_{2 \le i_1 \le k} 0\\
 = \det_{n\,k} \Bigl(Y(|1] \Big| A(|2] \Big| \ldots \Big| A(|k]\Bigr).
\end{multline*}

By substituting of the right-hand side of the above equality in~\eqref{lem:NKEvenAdditionPreserverHasZero11:eq3} we obtain that
\begin{equation}\label{lem:NKEvenAdditionPreserverHasZero11:eq1}
\det_{n\,k} \Bigl(Y(|1] \Big| A(|2] \Big| \ldots \Big| A(|k]\Bigr) = 0.
\end{equation}

Now note that $Y(|1] \Big| A(|2] \Big| \ldots \Big| A(|k]$ has the form described in the statement of Lemma~\ref{lem:DetNKXiEqX1} for $x_1 = Y_{1\,1}, \ldots, x_n = Y_{n\,1}$. This implies that\begin{equation}\label{lem:NKEvenAdditionPreserverHasZero11:eq2}
\det_{n\,k} \Bigl(Y(|1] \Big| A(|2] \Big| \ldots \Big| A(|k]\Bigr) = y_{1\,1}.
\end{equation}

By substituting of~\eqref{lem:NKEvenAdditionPreserverHasZero11:eq1} to~\eqref{lem:NKEvenAdditionPreserverHasZero11:eq2} we obtain that
\[
Y_{1\,1} = \det_{n\, k} \Bigl(Y(|1] \Big| A(|2] \Big| \ldots \Big| A(|k]\Bigr) = 0.
\]
\end{proof}

\begin{definition}\label{def:SCSDef}Let $n \ge k$ and  $1 \le j \le k$. By $\SCS_{i\,j} \colon \M_{n\,k}(\F) \to \M_{n\,k}(\F)$ we denote a linear map defined by
\begin{multline*}
\SCS_{i\,j}\begin{pmatrix}
x_{1\,1} & \cdots  & x_{1\,k}\\
  \vdots & \ddots & \vdots\\
x_{n\,1} & \cdots & x_{n\,k}\\
\end{pmatrix}\\ =(-1)^{n-i}\cdot  
\begin{pmatrix}
(-1)^{1 - \delta_{1\,j}}x_{i\,j} & \cdots & x_{i\,1} & \cdots & x_{i\,k}\\
\vdots & \ddots & \vdots & \ddots & \vdots\\
(-1)^{1 - \delta_{1\,j}}x_{n\,j} & \cdots & x_{n\,1} & \cdots & x_{n\,k}\\
-(-1)^{1 - \delta_{1\,j}}x_{1\,j} & \cdots & -x_{1\,1} & \cdots & -x_{1\,k}\\
  \vdots & \ddots & \vdots & \ddots & \vdots\\
-(-1)^{1 - \delta_{1\,j}}x_{(i-1)\,j} & \cdots & -x_{(i-1)\,1} & \cdots &  -x_{(i-1)\,k}\\
\end{pmatrix}
\end{multline*}
for all $X = (x_{i\,j}) \in \M_{n\,k}(\F)$. That is, $\SCS_{i\,j}(X)$ is obtained from $X$ by performing the following sequence of operations:
\begin{enumerate}
\item\label{lst:SIJSeq:it1}the row cyclical shift sending $i$-th row of $X$ to the first row of the result;
\item multiplying the bottom $i-1$ rows by $-1$;
\item exchanging the first and the $j$-th column;
\item\label{lst:SIJSeq:it4} multiplying the first column  by $(-1)^{1 - \delta_{1\,j}}$;
\item multiplying all the entries by $(-1)^{n-i}$.
\end{enumerate}
\end{definition}

\begin{lemma}\label{lem:SCShiftCommutesWithJoin}Let $1 \le k, m \le n$, $A \in \M_{n\,k}(\F)$, $B \in \M_{n\,m}(\F)$ and $1 \le i \le n$. Then $\SCS_{i\,1}(A|B) = \SCS_{i\,1}(A)|\SCS_{i\,1}(B)$.
\end{lemma}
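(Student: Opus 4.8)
The statement is essentially a bookkeeping verification: $\SCS_{i\,1}$ is built out of operations (row cyclic shift, sign-flipping the bottom $i-1$ rows, swapping columns $1$ and $j$ with $j=1$ so the swap is trivial, multiplying the first column by $(-1)^{1-\delta_{1\,1}} = (-1)^0 = 1$, and a global scalar $(-1)^{n-i}$) that act only on rows or are global, except for the column swap which for $j=1$ is the identity. The plan is therefore to observe that, with $j = 1$, the definition of $\SCS_{i\,1}$ simplifies: steps (3) and (4) of Definition~\ref{def:SCSDef} disappear (the swap is of a column with itself, and the sign factor is $1$), so $\SCS_{i\,1}$ reduces to the composition of a row-cyclic shift, a row sign change on the bottom $i-1$ rows, and multiplication by the global constant $(-1)^{n-i}$. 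Each of these three operations is performed identically on every row of the matrix, independently of which column that entry sits in.

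First I would write out $\SCS_{i\,1}(X)$ explicitly from Definition~\ref{def:SCSDef} with $j=1$, noting $\delta_{1\,1}=1$ so $(-1)^{1-\delta_{1\,1}}=1$, to get
\[
\SCS_{i\,1}(X) = (-1)^{n-i}\begin{pmatrix}
x_{i\,1} & \cdots & x_{i\,k}\\
\vdots & \ddots & \vdots\\
x_{n\,1} & \cdots & x_{n\,k}\\
-x_{1\,1} & \cdots & -x_{1\,k}\\
\vdots & \ddots & \vdots\\
-x_{(i-1)\,1} & \cdots & -x_{(i-1)\,k}
\end{pmatrix}.
\]
In other words, the $\ell$-th row of $\SCS_{i\,1}(X)$ depends only on the $\ell'$-th row of $X$ where $\ell' = \ell + i - 1$ read cyclically modulo $n$, multiplied by $(-1)^{n-i}$ and by an extra $-1$ when the shift wrapped around (i.e.\ when $\ell' \le i-1$, equivalently $\ell > n - i + 1$). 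This "row-wise" description is the crux: define the scalars $\varepsilon_\ell \in \{+1,-1\}$ and the permutation $\rho$ of $[n]$ so that $\SCS_{i\,1}(X)$ has $\ell$-th row equal to $\varepsilon_\ell \cdot (\text{$\rho(\ell)$-th row of }X)$; these $\varepsilon_\ell$ and $\rho$ depend on $i$ and $n$ only, not on the number of columns.

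Then the lemma follows immediately: the $\ell$-th row of $A|B$ is the concatenation of the $\ell$-th row of $A$ with the $\ell$-th row of $B$, so the $\ell$-th row of $\SCS_{i\,1}(A|B)$ is $\varepsilon_\ell$ times the $\rho(\ell)$-th row of $A|B$, which is the concatenation of $\varepsilon_\ell$ times the $\rho(\ell)$-th row of $A$ with $\varepsilon_\ell$ times the $\rho(\ell)$-th row of $B$ — that is, exactly the $\ell$-th row of $\SCS_{i\,1}(A) | \SCS_{i\,1}(B)$. Comparing all $n$ rows gives $\SCS_{i\,1}(A|B) = \SCS_{i\,1}(A)|\SCS_{i\,1}(B)$.

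I do not anticipate a genuine obstacle here; the only thing to be careful about is making sure the reduction "$j=1$ kills steps (3)–(4)" is stated correctly and that the wrap-around sign pattern is matched on both sides, but since the same $i$ and $n$ govern both $\SCS_{i\,1}(A)$ and $\SCS_{i\,1}(B)$, the signs and the row permutation automatically agree. The mild risk is purely notational: keeping the index arithmetic for the cyclic shift consistent with Lemma~\ref{lem:PermKNEvenSemicyclic} and Definition~\ref{def:SCSDef}. One could alternatively phrase the whole argument as: column concatenation $A|B \mapsto A|B$ commutes with any operation that is $\F$-linear and acts on rows only (left multiplication by a fixed matrix combined with a global scalar), and $\SCS_{i\,1}$ is such an operation; but the explicit row-by-row check above is the cleanest self-contained route.
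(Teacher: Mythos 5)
Your proposal is correct and matches the paper's own proof, which likewise observes that for $j=1$ the column operations are trivial and then writes out $\SCS_{i\,1}(A|B)$ explicitly to see it split as $\SCS_{i\,1}(A)|\SCS_{i\,1}(B)$. Your extra formalization via the row permutation $\rho$ and signs $\varepsilon_\ell$ is just a cleaner packaging of the same row-wise argument.
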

\begin{proof}
Indeed, if $A \in \M_{n\,k}(\F), B \in \M_{n\,m}(\F)$ and $1 \le i \le n$, then 
\begin{multline*}
\SCS_{i\,1}(A | B) = 
\SCS_{i\,1}\begin{pmatrix}
a_{1\,1} & \cdots  & a_{1\,k} & b_{1\,1} & \cdots  & b_{1\,k}\\
  \vdots & \ddots & \vdots &  \vdots & \ddots & \vdots\\
a_{n\,1} & \cdots & a_{n\,k} & b_{n\,1} & \cdots & b_{n\,k}\\
\end{pmatrix}\\
 =(-1)^{n-i}\cdot
\begin{pmatrix}
a_{i\,1} & \cdots  & a_{i\,k} & b_{i\,1} & \cdots  & b_{i\,k}\\
\vdots & \ddots & \vdots & \vdots & \ddots & \vdots\\
a_{n\,1} & \cdots & a_{n\,k} & b_{n\,1} & \cdots & b_{n\,k}\\
-a_{1\,1} & \cdots & -a_{1\,k} & -b_{1\,1} & \cdots & -b_{1\,k}\\
  \vdots & \ddots & \vdots &  \vdots & \ddots & \vdots\\
-a_{(i-1)\,1} & \cdots & -a_{(i-1)\,k} & -b_{(i-1)\,1} & \cdots & -b_{(i-1)\,k}\\
\end{pmatrix}\\ = \SCS_{i\,1}(A)|\SCS_{i\,1}(B).
\end{multline*}
\end{proof}

\begin{lemma}\label{lem:ReduceTo11ByShift}Assume that $n \ge k$ and $n + k$ be even. Then $\SCS_{i\,j}$ is an invertible linear map preserving $\det_{n\,k}$ for all $1 \le i \le n$, $1 \le j \le n$.
\end{lemma}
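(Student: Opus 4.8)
The plan is to write $\SCS_{i\,j}$ as the composition of the five elementary operations listed in Definition~\ref{def:SCSDef} and to keep track of the factor by which each of them multiplies $\det_{n\,k}$. Invertibility is immediate: a row cyclic shift is a permutation of the rows, negating the bottom $i-1$ rows is invertible, exchanging two columns is invertible, multiplying the first column by the scalar $(-1)^{1-\delta_{1\,j}}\in\{1,-1\}$ is invertible, and multiplying the whole matrix by $(-1)^{n-i}\neq 0$ is invertible; hence their composition $\SCS_{i\,j}$ is an invertible linear map. It remains to show that $\det_{n\,k}(\SCS_{i\,j}(X)) = \det_{n\,k}(X)$ for all $X \in \M_{n\,k}(\F)$.

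First I would observe that the first two operations --- the row cyclic shift bringing the $i$-th row to the top, followed by negating the bottom $i-1$ rows --- are exactly the semi-cyclic shift appearing in Lemma~\ref{lem:PermKNEvenSemicyclic}. Since $n\ge k$ and $n+k$ is even, that lemma applies and shows that this pair of operations multiplies $\det_{n\,k}$ by $(-1)^{(n-i)k}$ (the factor $(-1)^{(n-i)k}$ being its own inverse). Next, interchanging the first and the $j$-th column multiplies $\det_{n\,k}$ by $-1$ if $j\neq 1$ and leaves it unchanged if $j=1$, i.e.\ by $(-1)^{1-\delta_{1\,j}}$; and multiplying the first column by $(-1)^{1-\delta_{1\,j}}$ multiplies $\det_{n\,k}$ by the same factor once more, by linearity of $\det_{n\,k}$ in its columns. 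Finally, multiplying every entry of the matrix by $(-1)^{n-i}$ is the same as multiplying each of the $k$ columns by $(-1)^{n-i}$, and hence multiplies $\det_{n\,k}$ by $\bigl((-1)^{n-i}\bigr)^{k} = (-1)^{(n-i)k}$.

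Multiplying together the five factors collected above gives $(-1)^{(n-i)k}\cdot(-1)^{1-\delta_{1\,j}}\cdot(-1)^{1-\delta_{1\,j}}\cdot(-1)^{(n-i)k} = \bigl((-1)^{(n-i)k}\bigr)^{2}\cdot\bigl((-1)^{1-\delta_{1\,j}}\bigr)^{2} = 1$, so $\det_{n\,k}(\SCS_{i\,j}(X)) = \det_{n\,k}(X)$, which together with invertibility completes the proof.

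There is no substantial obstacle here; the only points requiring care are that $n\ge k$ and $n+k$ even are precisely the hypotheses needed to invoke Lemma~\ref{lem:PermKNEvenSemicyclic}, and that the degenerate case $j=1$ is handled uniformly by the formula (there the column interchange and the column scaling both act trivially and the exponent $1-\delta_{1\,j}$ vanishes, consistently with the computation above). Alternatively, one may first dispatch $j=1$ directly from Lemma~\ref{lem:PermKNEvenSemicyclic} --- using that $\SCS_{i\,1}(X)$ is just the semi-cyclic shift of $X$ scaled by $(-1)^{n-i}$ --- and then reduce the general $j$ to it by pre-composing with a transposition of columns $1$ and $j$ followed by a sign change of the first column, noting that the global scalar multiplication commutes with these two column operations.
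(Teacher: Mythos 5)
Your proposal is correct and follows essentially the same route as the paper: both arguments get invertibility from the elementary-operation decomposition, use Lemma~\ref{lem:PermKNEvenSemicyclic} to show the row shift together with the sign changes of the bottom rows contributes a factor $(-1)^{(n-i)k}$, and observe that the remaining operations contribute $(-1)^{1-\delta_{1\,j}}\cdot(-1)^{1-\delta_{1\,j}}\cdot(-1)^{(n-i)k}$, so that the total factor is $1$. The only cosmetic difference is that the paper packages the column swap and the column sign change as right multiplication by a matrix $C_S$ with $\det_k(C_S)=1$ and invokes Corollary~\ref{cor:rightmatrixmult}, whereas you track the two signs separately via the elementary column properties of $\det_{n\,k}$.
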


\begin{proof}Invertibility of $\SCS_{i\,1}$ follows directly from the definition. Assume that $X = (x_{i\,j}) \in \M_{n\,k}(\F)$. Let $X' \in \M_{n\,k}(\F)$ be defined by 
\[
X' = 
\begin{pmatrix}
(-1)^{1 - \delta_{1\,j}}x_{i\,j} & \cdots & x_{i\,1} & \cdots & x_{i\,k}\\
\vdots & \ddots & \vdots & \ddots & \vdots\\
(-1)^{1 - \delta_{1\,j}}x_{n\,j} & \cdots & x_{n\,1} & \cdots & x_{n\,k}\\
-(-1)^{1 - \delta_{1\,j}}x_{1\,j} & \cdots & -x_{1\,1} & \cdots & -x_{1\,k}\\
  \vdots & \ddots & \vdots & \ddots & \vdots\\
-(-1)^{1 - \delta_{1\,j}}x_{(i-1)\,j} & \cdots & -x_{(i-1)\,1} & \cdots &  -x_{(i-1)\,k}\\
\end{pmatrix}.
\]
That is, $X'$ is obtained from $X$ by performing operations \ref{lst:SIJSeq:it1}--\ref{lst:SIJSeq:it4} from the sequence in the definition of $\SCS_{i\,j}$ (Definition~\ref{def:SCSDef}).

Let $C_{S} \in \M_{k\,k}(\F)$ be a matrix such that a linear map $Y \mapsto YC_S$ on $\M_{n\,k}(\F)$ exchanges the first and the $j$-th row of the matrix and multiplies its first column by $(-1)^{1 - \delta_{1\,j}}$. That is, $C_S$ is the permutation matrix corresponding to a transposition/trivial permutation $(1j)$ which first row is multiplied by $(-1)^{1 - \delta_{1\,j}}$. 

Then $X' = (-1)^{n-i}\SCS_{i\,j}(X)$ by the definition of $\SCS_{i\,j}$ and $\det_k(C_S) = 1$ by the definition of $C_S$. 

On the one hand, we have the following sequence of equalities
\begin{equation}\label{lem:ReduceTo11ByShift:eq1}
\det_{n\,k}(\SCS_{i\,j}(X)) = \det_{n\,k}((-1)^{n-i}X') = (-1)^{(n-i)k}\det_{n\,k}(X'),
\end{equation}
where the last equality follows from the multilinearity of $\det_{n\,k}$ with respect to the columns of $X'$.

On the other hand,  
\[
 XC_S = \begin{pmatrix}
(-1)^{1 - \delta_{1\,j}}x_{1\, j} & \ldots & x_{1\,1} & \ldots & x_{1\, k}\\
\vdots & \ddots & \vdots & \ddots & \vdots\\
(-1)^{1 - \delta_{1\,j}}x_{n\, j} & \ldots & x_{i-1\,1} & \ldots &  x_{i-1\, k}
\end{pmatrix}
\]
by the definition of $C_S$. Hence, \begin{multline}\label{lem:ReduceTo11ByShiftOdd:eq2}
(-1)^{(n-i)k}\det_{n\,k}(X')\\
 = (-1)^{(n-i)k}\begin{vmatrix}
(-1)^{1 - \delta_{1\,j}}x_{i\,j} & \cdots & x_{i\,1} & \cdots & x_{i\,k}\\
\vdots & \ddots & \vdots & \ddots & \vdots\\
(-1)^{1 - \delta_{1\,j}}x_{n\,j} & \cdots & x_{n\,1} & \cdots & x_{n\,k}\\
-(-1)^{1 - \delta_{1\,j}}x_{1\,j} & \cdots & -x_{1\,1} & \cdots & -x_{1\,k}\\
  \vdots & \ddots & \vdots & \ddots & \vdots\\
-(-1)^{1 - \delta_{1\,j}}x_{(i-1)\,j} & \cdots & -x_{(i-1)\,1} & \cdots &  -x_{(i-1)\,k}\\
\end{vmatrix}_{n\,k}\\
  = \det_{n\,k}(XC_S),
\end{multline}
where the last equality follows from Lemma~\ref{lem:PermKNEvenSemicyclic}. In addition, since $\det_k(C_S) = 1$, then
\begin{equation}\label{lem:ReduceTo11ByShift:eq2}
\det_{n\,k}(XC_S) = \det_{n\,k}(X)
\end{equation}
by Corollary~\ref{cor:rightmatrixmult}.

Therefore, \[
\det_{n\,k}(\SCS_{i\,j}(X)) = (-1)^{(n-i)k}\det_{n\,k}(X') = \det_{n\,k}(X)
\]
by \eqref{lem:ReduceTo11ByShift:eq1} and \eqref{lem:ReduceTo11ByShift:eq2}. From this we conclude that $\SCS_{i\,j}$ preserves $\det_{n\,k}$.
\end{proof}

\begin{lemma}\label{lem:NKEvenAdditionPreserverIsZero}Assume that $n \ge k,$ $|\F| > k$ and  $n + k$ is even. Then $\rad(\det_{n\,k}) = \{0\}$.
\end{lemma}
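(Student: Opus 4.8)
The plan is to show that every matrix $Y$ in $\rad(\det_{n\,k})$ is zero, reducing the claim to Lemma~\ref{lem:NKEvenAdditionPreserverHasZero11} entry by entry. The bridge is the family of semi-cyclic shift maps $\SCS_{i\,j}$ from Definition~\ref{def:SCSDef}: by Lemma~\ref{lem:ReduceTo11ByShift} each $\SCS_{i\,j}$ is an invertible linear preserver of $\det_{n\,k}$, and by the formula in Definition~\ref{def:SCSDef} it transports the $(i,j)$-entry of a matrix into the $(1,1)$-position up to a nonzero sign.

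First I would note that $\rad(\det_{n\,k})$ is invariant under any invertible linear map $S$ with $\det_{n\,k}(S(Z)) = \det_{n\,k}(Z)$ for all $Z$: if $Y \in \rad(\det_{n\,k})$ and $A \in \M_{n\,k}(\F)$, $\lambda \in \F$, then writing $A = S(S^{-1}(A))$ and applying the preservation property of $S$ twice together with $Y \in \rad(\det_{n\,k})$ gives $\det_{n\,k}(A + \lambda S(Y)) = \det_{n\,k}(S^{-1}(A) + \lambda Y) = \det_{n\,k}(S^{-1}(A)) = \det_{n\,k}(A)$, so $S(Y) \in \rad(\det_{n\,k})$. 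By Lemma~\ref{lem:ReduceTo11ByShift} this applies to $S = \SCS_{i\,j}$ for all $1 \le i \le n$, $1 \le j \le k$, hence $\SCS_{i\,j}(Y) \in \rad(\det_{n\,k})$ whenever $Y \in \rad(\det_{n\,k})$.

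Next, fixing $Y = (y_{i\,j}) \in \rad(\det_{n\,k})$ and indices $i, j$, I would apply Lemma~\ref{lem:NKEvenAdditionPreserverHasZero11} to $\SCS_{i\,j}(Y)$: this matrix lies in $\rad(\det_{n\,k})$ by the previous paragraph, and the ambient hypotheses ($n \ge k$, $|\F| > k$, $n + k$ even) are exactly those we assume, so the conclusion of that lemma is that the $(1,1)$-entry of $\SCS_{i\,j}(Y)$ vanishes. But Definition~\ref{def:SCSDef} shows this $(1,1)$-entry equals $(-1)^{n-i}(-1)^{1-\delta_{1\,j}}y_{i\,j}$, a nonzero scalar multiple of $y_{i\,j}$; hence $y_{i\,j} = 0$. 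As $i$ and $j$ range over all admissible values this gives $Y = O_{n\,k}$, and since $Y$ was an arbitrary element of $\rad(\det_{n\,k})$ we conclude $\rad(\det_{n\,k}) = \{0\}$.

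I do not expect a real obstacle here: the substance is already contained in Lemma~\ref{lem:NKEvenAdditionPreserverHasZero11} and Lemma~\ref{lem:ReduceTo11ByShift}, and the only points needing attention are the routine verification that the radical is stable under invertible $\det_{n\,k}$-preservers and the immediate identification of the $(1,1)$-entry of $\SCS_{i\,j}(Y)$ from the displayed formula in Definition~\ref{def:SCSDef}.
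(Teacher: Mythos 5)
Your proposal is correct and follows essentially the same route as the paper: both reduce to Lemma~\ref{lem:NKEvenAdditionPreserverHasZero11} by applying $\SCS_{i\,j}$ and using the same chain of equalities $\det_{n\,k}(A + \lambda \SCS_{i\,j}(Y)) = \det_{n\,k}(\SCS_{i\,j}^{-1}(A) + \lambda Y) = \det_{n\,k}(A)$, then reading off the $(1,1)$-entry. Your packaging of that computation as a general ``the radical is invariant under invertible $\det_{n\,k}$-preservers'' step is a minor stylistic difference, not a different argument.
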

\begin{proof}Assume that $Y \in \M_{n\, k}(\F) \in \rad(\det_{n\,k})$. We will show that $y_{i\,j} = 0$ for all $1 \le i \le n, 1 \le j \le k.$

Now consider $\SCS_{i\,j}(Y)$ where $\SCS_{i\,j}$ is an invertible linear map defined in Definition~\ref{def:SCSDef}. The definition of $\SCS_{i\,j}$  implies that
\begin{equation}\label{lem:NKEvenAdditionPreserverIsZero:eq1}
\SCS_{i\,j}(Y)_{1\,1} = (-1)^{n-i+1-\delta_{1\,j}}y_{i\,j}.
\end{equation}

In addition,
\begin{multline*}
\det_{n\,k} \left(A + \lambda \SCS_{i\,j}(Y)\right) = \det_{n\,k} \left(\SCS_{i\,j}(\SCS_{i\,j}^{-1}(A)) + \lambda \SCS_{i\,j}(Y)\right)\\
= \det_{n\,k} \left(\SCS_{i\,j}(\SCS_{i\,j}^{-1}(A) + \lambda Y)\right)
 = \det_{n\,k} \left(\SCS_{i\,j}^{-1}(A) + \lambda Y\right)\\
  = \det_{n\,k} \left(\SCS_{i\,j}^{-1}(A)\right) = \det_{n\,k}(A)
\end{multline*}
for all $A \in \M_{n\,k}(\F)$ and $\lambda \in \F$. Hence, $\SCS_{i\,j}(Y)$ satisfies the condition of Lemma~\ref{lem:NKEvenAdditionPreserverHasZero11}. This implies that
\begin{equation}\label{lem:NKEvenAdditionPreserverIsZero:eq2}
\SCS_{i\,j}(Y)_{1\,1} = 0.
\end{equation}
By aligning together the equalities~\eqref{lem:NKEvenAdditionPreserverIsZero:eq1} and \eqref{lem:NKEvenAdditionPreserverIsZero:eq2} we obtain that
\[
(-1)^{n-i+1-\delta_{1\,j}}y_{i\,j} = \SCS_{i\,j}(Y)_{1\,1} = 0.
\]

Thus, $y_{i\,j} = 0$ for all $1 \le i \le n$ and $1 \le j \le k$. Therefore, $Y = 0$ and consequently $\rad(\det_{n\,k}) = \{0\}$.
\end{proof}

From the lemma above we conclude that we could apply Lemma~\ref{lem:RadicalCriterion} and deduce the corollary below.

\begin{corollary}\label{cor:LinPresIso}Assume that $n \ge k + 2,$ $|\F| > k$ and $n + k$ is even. Let $T$ be a linear map on $\M_{n\,k}(\F)$ such that $\det_{n\,k}(T(X)) = \det_{n\,k}(X)$ for all $X \in \M_{n\,k}(\F)$. Then $T$ is invertible.
\end{corollary}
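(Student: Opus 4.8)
The plan is to combine the computation of the radical of $\det_{n\,k}$ just obtained with the general criterion of Waterhouse recalled in Section~\ref{sec:prelim}. Concretely, I would take $V = \M_{n\,k}(\F)$, viewed as a finite-dimensional $\F$-vector space, and $f = \det_{n\,k}\colon V \to \F$.

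First I would check that the hypotheses of Lemma~\ref{lem:NKEvenAdditionPreserverIsZero} are met under the assumptions of the corollary: $n \ge k+2$ gives in particular $n \ge k$, while $|\F| > k$ and the parity of $n + k$ are assumed outright. Hence $\rad(\det_{n\,k}) = \{0\}$.

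Next I would invoke Lemma~\ref{lem:RadicalCriterion} in its contrapositive form. That lemma asserts that $\rad(f)$ is nonzero if and only if there is a noninvertible linear map $T\colon V \to V$ with $f(T\mathbf v) = f(\mathbf v)$ for all $\mathbf v \in V$. Since $\rad(\det_{n\,k}) = \{0\}$ is \emph{not} nonzero, no noninvertible linear map on $\M_{n\,k}(\F)$ can preserve $\det_{n\,k}$. The map $T$ in the statement does preserve $\det_{n\,k}$ by hypothesis, so it cannot be noninvertible; therefore $T$ is invertible, which is the claim.

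I do not expect any genuine obstacle here: all the substance has already been extracted into Lemma~\ref{lem:NKEvenAdditionPreserverIsZero}, whose proof is where the real work takes place — transporting an arbitrary entry $y_{i\,j}$ to the $(1,1)$ position via the invertible semi-cyclic-shift maps $\SCS_{i\,j}$, and then forcing that entry to vanish through the explicit evaluation in Lemma~\ref{lem:DetNKXiEqX1}. The only points that demand attention in the present deduction are verifying that the hypotheses line up (in particular that $n \ge k+2$ suffices) and citing Lemma~\ref{lem:RadicalCriterion} in the correct direction.
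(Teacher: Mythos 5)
Your proposal is correct and is exactly the paper's argument: the paper derives this corollary by combining Lemma~\ref{lem:NKEvenAdditionPreserverIsZero} (the radical of $\det_{n\,k}$ is trivial when $n+k$ is even) with the contrapositive of Waterhouse's criterion, Lemma~\ref{lem:RadicalCriterion}. Your checks that the hypotheses line up (noting $n \ge k+2$ implies $n \ge k$) and that the criterion is cited in the correct direction are all that is needed.
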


\begin{note}The statement of Corollary~\ref{cor:LinPresIso} does not hold for $n + k$ odd because in this case $\rad(\det_{n\,k}) \neq \{0\}$. Indeed, Lemma~\ref{lem:NAKAGAMILEMMA20} implies that 
$$
0 \neq \begin{pmatrix}
1 & \cdots & 1\\
\vdots & \ddots & \vdots\\
1 & \cdots & 1
\end{pmatrix} \in \rad(\det_{n\,k})
$$
if $n - (k + 1)$ is even. 

For this reason we treat this case separately in the subsequent paper.
\end{note}

\section{Proof of the main theorem}\label{sec:maintheorem}

We begin with establishing a relationship between the following two properties of a given matrix $B \in \M_{n\,k}(\F)$: ``$\deg_{\lambda} (\det_{n\, k} (A + \lambda B)) \le 1$ for all $A \in \M_{n\,k}(\F)$'' and ``$\rk(B) \le 1$''. Since the first property is preserved by every linear $\det_{n\,k}$-preserver as it is shown in Lemma~\ref{lem:DetNKPresPresLowDeg} below, then we could use this relationship to prove that the second property is also preserved by every linear $\det_{n\,k}$-preserver. This would allow us to use the results of Westwick regarding linear maps preserving the set of matrices of rank one.

\begin{lemma}\label{lem:Rank1CullisDeg1}Let $n \ge k$  and $B \in \M_{n\,k}(\F)$ be such that $\rk (B) \le 1.$ Then $$\deg_{\lambda} (\det_{n\, k} (A + \lambda B)) \le 1$$ for all $A \in \M_{n\, k}(\F).$
\end{lemma}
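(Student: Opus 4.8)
The goal is to show that if $\rk(B) \le 1$, then $\det_{n\,k}(A + \lambda B)$ has $\lambda$-degree at most $1$ for every $A$. The natural tool is Corollary~\ref{cor:CullisBinomialExpansion}: the coefficient of $\lambda^d$ in $\det_{n\,k}(A + \lambda B)$ is a sum of Cullis' determinants of matrices whose $j$-th column equals $B(|j]$ for $j$ in some $d$-element index set $\{i_1 < \dots < i_d\} \subseteq [k]$ and equals $A(|j]$ for all other $j$. So it suffices to show that whenever $d \ge 2$, each such mixed matrix has Cullis' determinant zero, i.e. that any $n\times k$ matrix having at least two of its columns drawn from $B$ (where $\rk B \le 1$) has vanishing Cullis' determinant.

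First I would dispose of the case $B = O_{n\,k}$, which is trivial. Otherwise $\rk(B) = 1$, so every column of $B$ is a scalar multiple of a single nonzero vector $\mathbf b \in \F^n$; write $B(|j] = \beta_j \mathbf b$. Now take any index set of size $d \ge 2$, say containing $i_1 < i_2$. In the mixed matrix, column $i_1$ is $\beta_{i_1}\mathbf b$ and column $i_2$ is $\beta_{i_2}\mathbf b$. If $\beta_{i_1} = \beta_{i_2} = 0$ the matrix has a zero column, so its Cullis' determinant vanishes by property (3) of the basic properties theorem (multilinearity, or Corollary~\ref{cor:CullisAsSumDet}). Otherwise, at least one of $\beta_{i_1}, \beta_{i_2}$ is nonzero, and columns $i_1$ and $i_2$ are proportional nonzero vectors (both multiples of $\mathbf b$); hence one is a scalar multiple of the other, and again property (3) — a column being a linear combination of another column — gives Cullis' determinant zero. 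Either way, every term contributing to the $\lambda^d$ coefficient with $d \ge 2$ vanishes.

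Therefore in the expansion of Corollary~\ref{cor:CullisBinomialExpansion} only the $d = 0$ and $d = 1$ terms survive, which is exactly the assertion $\deg_\lambda(\det_{n\,k}(A + \lambda B)) \le 1$. I do not expect any genuine obstacle here: the only mild point of care is to invoke property (3) in the right form (two proportional columns, allowing the zero-column degenerate subcase) rather than literally "two identical columns", and to make sure the argument handles $\beta_{i_1}=0 \ne \beta_{i_2}$ correctly — which it does, since a zero column is a (trivial) linear combination of any other column, or one simply notes a matrix with a zero column has zero Cullis' determinant directly. Everything else is just bookkeeping with the binomial-type expansion.
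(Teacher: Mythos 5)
Your argument is correct and follows essentially the same route as the paper's proof: both expand $\det_{n\,k}(A+\lambda B)$ via Corollary~\ref{cor:CullisBinomialExpansion} and observe that each term in the coefficient of $\lambda^d$ for $d\ge 2$ contains at least two columns of $B$, which are linearly dependent when $\rk(B)\le 1$, so the corresponding Cullis' determinant vanishes. Your extra case analysis (zero column versus proportional nonzero columns) is just a more careful unpacking of the same linear-dependence observation the paper invokes directly.
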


\begin{proof} 
 Let $P \in \F[\lambda]$ be a polynomial defined by
 \[P = \det_{n\, k} (A + \lambda B) = a_0 + a_1\lambda + \ldots + a_k \lambda^d,
  \]
where $a_0, \ldots, a_k \in \F$. Let us show that $a_d \ge 0$ for all $d \ge 2$. Indeed, the expansion~\eqref{eq:CullisBinomialExpansion} implies that
\begin{equation}\label{lem:Rank1CullisDeg1:eq1}
a_d = \sum_{1\le i_1 < \ldots < i_d \le k} \det_{n\,k}\Bigl(A(|1]\Big|\ldots \Big| B(|i_1] \Big| \ldots \Big| B(|i_d] \Big| \ldots \Big| A(|n] \Bigr).
\end{equation}
Assume that $d \ge 2$. Let $1\le i_1 < \ldots < i_d \le k$ and $C \in \M_{n\,k}(\F)$ be defined by
\[
C = A(|1]\Big|\ldots \Big| B(|i_1] \Big| \ldots \Big| B(|i_d] \Big| \ldots \Big| A(|n].
\]
This matrix contains at least two columns from $B$ because $d \ge 2$. Since $\rk (B) \le 1$, these columns of $C$ are linearly dependent and consequently $\det_{n\,k}(C) = 0$. This implies that the right-hand side of~\eqref{lem:Rank1CullisDeg1:eq1} is equal to 0. Hence, $a_d = 0$ as the left-hand side of~\eqref{lem:Rank1CullisDeg1:eq1}. 

Therefore, $a_d = 0$ for all $d \ge 2$ and $\deg_{\lambda} (\det_{n\, k} (A + \lambda B)) \le 1$.
\end{proof}

\begin{remark}
The converse of Lemma~\ref{lem:Rank1CullisDeg1} is also true under additional assumptions. It is proved in Lemma~\ref{lem:CullisDeg1Rank1}, but the proof is more complicated. It is easy to show using  the expansion~\eqref{eq:CullisBinomialExpansion} that for a given $B \in \M_{n\,k}(\F)$ the inequality $$\deg_{\lambda} (\det_{n\, k} (A + \lambda B)) \le 1\;\;\mbox{for all}\;\; A \in \M_{n\,k}(\F)$$ implies that 
$$\det_{n\,k}\Bigl(B(|i]\Big|B(|j]\Big| A'\Bigr) = 0\;\;\mbox{for all}\;\; 1 \le i < j \le j,\, A' \in \M_{n\,(k-2)}(\F).$$

One would try to deduce from this that
\begin{equation}\label{eq:Conv1}
\det_{2} \Bigl(B[l,l+1|i,j]\Bigr) = 0
\end{equation}
 for all $1 \le i < j \le k$ and $1 \le l < n$ by finding the appropriate $A'$ such that $\det_{n\,k}\Bigl(B(|i]\Big|B(|j]\Big| A'\Bigr) = \det_{2} \Bigl(B[l,l+1|i,j]\Bigr)$. Then the condition~\eqref{eq:Conv1} would easily imply that $\rk(B) \le 1$.

Unfortunately, our computations show that if $k = 4$ and $\Char(\F) \neq 0$, then for certain $n \ge k + 2$ such $A'$ does not exist. 


Nevertheless, the set of possible sums of $2\times 2$ minors of $X$ obtained by substituting different $A' \in \M_{n\,2}(\F)$ in $\det_{n\, 4}(X|A')$ is large enough for our purposes. It is possible to obtain the equality~\eqref{eq:Conv1} from the equalities $\det_{n\, 4}(X|A') = 0$ for all $A' \in \M_{n\,2}(\F)$ belonging to this set as it is done below.  The simplest expression of the form $\det_{n\, 4}(X|A')$ could be obtained from the next lemma.
\end{remark}

\begin{lemma}\label{lem:Exists2DiffDiff}
Let $n \ge k \ge 4$, $n > l > 2$ and $X = (x_{i\,j}) \in \M_{n\, 2}(\F)$. Then there exists $B \in \M_{n\,(k-2)}(\F)$ such that
\[
\det_{n\, k} (X | B) =
\left|
\begin{matrix}
x_{1\, 1} - x_{1\, 2} & x_{2\, 1} - x_{2\, 2}\\
x_{1\, l} - x_{1\, l+1} & x_{2\, l} - x_{2\, (l+1)}\\
\end{matrix}
\right|.
\]
\end{lemma}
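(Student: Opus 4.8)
The plan is to exhibit one explicit $B$ with entries in $\{-1,0,1\}$ and compute $\det_{n\,k}(X|B)$ directly. Write $A=\{1,2,l,l+1\}$; since $l>2$ these four indices are distinct, and since $n>l$ they all lie in $[n]$. Because $n\ge k$ we have $n-4\ge k-4$, so I may fix $k-4$ distinct indices $s_1,\ldots,s_{k-4}\in[n]\setminus A$. I take the columns of $B$ to be: two \emph{special} columns, the first having $1$ in rows $1$ and $2$ and $0$ elsewhere, the second having a common value $\varepsilon\in\{1,-1\}$ in rows $l$ and $l+1$ and $0$ elsewhere; together with $k-4$ \emph{pinning} columns, the $j$-th of which is the standard basis column carrying a single $1$ in row $s_j$. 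The sign $\varepsilon$ is left free and fixed only at the very end to correct the overall sign.

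First I would expand $\det_{n\,k}(X|B)$ by Corollary~\ref{cor:CullisAsSumDet} as an alternating sum of $k\times k$ minors over $c\in\binom{[n]}{k}$, and then expand each such minor by ordinary Laplace expansion along the $k-2$ columns coming from $B$. This produces
\[
\det_{n\,k}(X|B)=\sum_{1\le a<b\le n}\kappa_{a\,b}\,\det_2\Bigl(X[a,b|)\Bigr),
\]
where $\kappa_{a\,b}$ is a signed sum over the ways of completing the pair $\{a,b\}$ (the two rows feeding the columns of $X$) to a set $c$ by selecting the $k-2$ rows feeding the columns of $B$. Since each pinning column has a single nonzero entry, a nonvanishing $B$-block forces all of $s_1,\ldots,s_{k-4}$ to be among the $B$-rows and forces $a,b\notin\{s_1,\ldots,s_{k-4}\}$; the two special columns in turn force exactly one $B$-row in $\{1,2\}$ and exactly one in $\{l,l+1\}$.

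From this description I would read off $\kappa_{a\,b}$. If $\{a,b\}$ meets $\{s_1,\ldots,s_{k-4}\}$ a pinning column is identically zero on the $B$-rows, so $\kappa_{a\,b}=0$; if $\{a,b\}=\{1,2\}$ or $\{a,b\}=\{l,l+1\}$ one of the two required special rows is unavailable, so again $\kappa_{a\,b}=0$. In every remaining non-cross case the completing set admits two (or four) choices differing only by the swap $l\leftrightarrow l+1$ or $1\leftrightarrow2$; because these indices are adjacent and neither is otherwise present in $c$, such a swap changes $\sgn_{[n]}(c)$ by a sign while leaving the Laplace sign and the sorted position of the moved index unchanged, so the contributions cancel in pairs and $\kappa_{a\,b}=0$. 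For the four cross pairs $\{1,l\},\{1,l+1\},\{2,l\},\{2,l+1\}$ the completing set is forced to be $\{1,2,l,l+1\}\cup\{s_1,\ldots,s_{k-4}\}$, so each coefficient is a single explicit product of $\sgn_{[n]}$, a Laplace sign, and a $2\times2$ minor of the special columns; a direct evaluation should give $+1,-1,-1,+1$ up to one common sign, matching the claimed right-hand side once $\varepsilon$ is chosen accordingly.

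The hard part will be the sign bookkeeping. The pairwise cancellation in the non-cross cases rests on verifying that replacing $l$ by $l+1$ (or $1$ by $2$) genuinely leaves both the Laplace sign and the sorted position of the moved row unchanged, so that only $\sgn_{[n]}(c)$ flips; this is exactly where the adjacency of $l,l+1$ and of $1,2$ is used, and it must be checked carefully in the presence of the fixed pinning rows. The remaining computation — evaluating $\sgn_{[n]}(\{1,2,l,l+1\}\cup\{s_1,\ldots,s_{k-4}\})$ together with the four Laplace signs and matching them to the target — is routine but has to be carried out explicitly to determine the correct value of $\varepsilon$.
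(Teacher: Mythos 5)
Your proposal is correct, and in fact your witness matrix coincides with the paper's: the paper takes $A = E_{1\,1}+E_{2\,1}+E_{l\,2}+E_{(l+1)\,2}+\sum_{\alpha=1}^{k-4}E_{i_\alpha\,(\alpha+2)}$, which is exactly your two special columns plus $k-4$ pinning columns, with your free sign $\varepsilon$ playing the role of the paper's final correction factor $(-1)^{l+l'}$ applied to the last column. Where you genuinely differ is in how $\det_{n\,k}(X|B)$ is evaluated. The paper reduces iteratively: Laplace expansion along the last column $k-4$ times down to an $(n-k+4)\times 4$ Cullis determinant $Y$, then two further single-column expansions producing four $(n-k+2)\times 2$ Cullis determinants $K,L,M,N$, and only at that stage invokes Corollary~\ref{lem:DetNKAsSumProj} and checks that all terms with $c\neq\{1,l\}$ cancel among the four. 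You instead expand once and for all via Corollary~\ref{cor:CullisAsSumDet} into $k\times k$ minors, perform a generalized Laplace expansion along the $B$-columns inside each minor, and annihilate every coefficient $\kappa_{a\,b}$ except the four cross pairs by the adjacency involutions $l\leftrightarrow l+1$ and $1\leftrightarrow 2$; your justification for the cancellation (no element of $c$ lies strictly between two consecutive integers, so only $\sgn_{[n]}(c)$ flips while the Laplace sign and the $B$-block are untouched) is sound. Your organization handles all the sign cancellation in one stroke and exposes the mechanism more transparently; the paper's buys smaller, independently verifiable steps that reuse its established machinery. On the one point you flag as unexecuted: note that the relative pattern $+1,-1,-1,+1$ among the cross coefficients is the only datum $\varepsilon$ cannot absorb, but it follows from the very same swap argument you already made — e.g.\ passing from the pair $\{1,l\}$ to $\{1,l+1\}$ leaves $c_0=\{1,2,l,l+1\}\cup\{s_1,\ldots,s_{k-4}\}$ and the $B$-block determinant unchanged (the adjacent rows carry equal entries and keep their sorted positions) while shifting the Laplace exponent by exactly one — so the remaining "routine bookkeeping" does close, and no new idea is needed.
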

\begin{proof}
Since $\left|\{1, \ldots, n\} \setminus \{1, 2, l, l+1\}\right| = n - 4 \ge k - 4,$ then there exists a sequence $2 < i_1 < \ldots < i_{k-4} \le n$ such that $\{l, l+1\} \cap \{i_1,\ldots, i_{k-4}\} = \varnothing.$ Let $A \in \M_{n\, (k-2)}(\F)$ be defined by
\[
A = E_{1\, 1} + E_{2\, 1} + E_{l\, 2} + E_{l+1\, 2} + \sum_{\alpha = 1}^{k-4} E_{i_{\alpha}\, (\alpha + 2)}.
\]

Using the Laplace expansion of $\det_{n\, k} (X | A)$ along the last column $k-4$ times we obtain that
\begin{equation}\label{eq:DetYDetY}
\det_{n\, k} (X | A) = (-1)^s \det_{(n-k+4)\,4} (Y)
\end{equation}
for some $s \in \mathbb N$ and $Y\in \M_{(n-k+4)\,4}(\F)$ defined by
\[
Y =  \Bigl(X(|1] \Big| X(|2] \Big| A(|1] \Big| A(|2]\Bigr)(i_1, \ldots, i_{k-4}|).
\]

Note that $Y$ has the form
\[
\begin{pmatrix}
x_{1\, 1} & x_{1\, 2} & 1 & 0 \\
x_{2\, 1} & x_{2\, 2} & 1 & 0\\
\vdots & \vdots & \vdots & \vdots\\
x_{l\, 1}& x_{l\, 2}  & 0 & 1\\
x_{(l+1)\, 1}& x_{(l+1)\,2} & 0 & 1 \\
\vdots & \vdots & \vdots & \vdots\\
\end{pmatrix}.
\]
This implies that $Y$ contains the pairs of consecutive rows
\[
(x_{1\,1}, x_{1\,2}, 1, 0),\; (x_{2\,1}, x_{2\,2}, 1, 0)\quad \mbox{and} \quad (x_{l\,1}, x_{l\,2}, 0, 1),\; (x_{(l+1)\,1}, x_{(l+1)\,2}, 0, 1).
\]
Suppose that the last pair is located in $Y$ on the rows $l'$ and $l'+1$ correspondingly. Then it follows from the  Laplace expansion along the last column of $Y$ that
\begin{multline*}
\det_{(n-k+4)\,4}(Y) =
(-1)^{l'}\begin{vmatrix}
x_{1\, 1} & x_{1\, 2} & 1 \\
x_{2\, 1} & x_{2\, 2} & 1 \\
\vdots & \vdots & \vdots \\
x_{l\, 1}& x_{l\, 2}  & 0 \\
x_{(l+1)\, 1}& x_{(l+1)\,2} & 0  \\
\vdots & \vdots & \vdots
\end{vmatrix}_{(n-k+3)\, 3}\\
 -
(-1)^{l'}\begin{vmatrix}
x_{1\, 1} & x_{1\, 2} & 1  \\
x_{2\, 1} & x_{2\, 2} & 1 \\
\vdots & \vdots & \vdots \\
x_{l\, 1}& x_{l\, 2}  & 0 \\
x_{(l+1)\, 1}& x_{(l+1)\,2} & 0 \\
\vdots & \vdots & \vdots
\end{vmatrix}_{(n-k+3)\, 3}.
\end{multline*}

Next, if we apply the Laplace expansion along the last column to each summand separately, then we obtain that
\begin{multline*}
\begin{vmatrix}
x_{1\, 1} & x_{1\, 2} & 1 \\
x_{2\, 1} & x_{2\, 2} & 1 \\
\vdots & \vdots & \vdots \\
x_{(l+1)\, 1}& x_{(l+1)\,2} & 0  \\
\vdots & \vdots & \vdots
\end{vmatrix}_{(n-k+3)\, 3}\\ =
\begin{vmatrix}
x_{2\, 1} & x_{2\, 2}  \\
\vdots & \vdots \\
x_{(l+1)\, 1}& x_{(l+1)\,2}  \\
\vdots & \vdots
\end{vmatrix}_{(n-k+2)\, 2}
-
\begin{vmatrix}
x_{1\, 1} & x_{1\, 2}\\
\vdots & \vdots \\
x_{(l+1)\, 1}& x_{(l+1)\,2}  \\
\vdots & \vdots
\end{vmatrix}_{(n-k+2)\, 2}
\end{multline*}
and
\begin{equation*}
\begin{vmatrix}
x_{1\, 1} & x_{1\, 2} & 1 \\
x_{2\, 1} & x_{2\, 2} & 1 \\
\vdots & \vdots & \vdots \\
x_{l\, 1}& x_{l\,2} & 0  \\
\vdots & \vdots & \vdots
\end{vmatrix}_{(n-k+3)\, 3} =
\begin{vmatrix}
x_{2\, 1} & x_{2\, 2}  \\
\vdots & \vdots \\
x_{l\, 1}& x_{l\,2}  \\
\vdots & \vdots
\end{vmatrix}_{(n-k+2)\, 2}
-
\begin{vmatrix}
x_{1\, 1} & x_{1\, 2}\\
\vdots & \vdots \\
x_{l\, 1}& x_{l\,2}  \\
\vdots & \vdots
\end{vmatrix}_{(n-k+2)\, 2}.
\end{equation*}

Thus,
\begin{multline*}
(-1)^{l'}\det_{(n-k+4)\, 4}(Y)
=
\begin{vmatrix}
x_{2\, 1} & x_{2\, 2}  \\
\vdots & \vdots \\
x_{(l+1)\, 1}& x_{(l+1)\,2}  \\
\vdots & \vdots
\end{vmatrix}_{(n-k+2)\, 2}\\
-
\begin{vmatrix}
x_{1\, 1} & x_{1\, 2}\\
\vdots & \vdots \\
x_{(l+1)\, 1}& x_{(l+1)\,2}  \\
\vdots & \vdots
\end{vmatrix}_{(n-k+2)\, 2}
-
\begin{vmatrix}
x_{2\, 1} & x_{2\, 2}  \\
\vdots & \vdots \\
x_{l\, 1}& x_{l\,2}  \\
\vdots & \vdots
\end{vmatrix}_{(n-k+2)\, 2}
+
\begin{vmatrix}
x_{1\, 1} & x_{1\, 2}\\
\vdots & \vdots \\
x_{l\, 1}& x_{l\,2}  \\
\vdots & \vdots
\end{vmatrix}_{(n-k+2)\, 2}\\
= \det_{(n-k+2)\,2}(K) - \det_{(n-k+2)\,2}(L)  - \det_{(n-k+2)\,2}(M)  + \det_{(n-k+2)\,2}(N),
\end{multline*}
where
$$K = Y(1,l|3,4),\quad L = Y(2,l|3,4),\quad M = Y(1,l+1|3,4),\quad N = Y(2,l+1|3,4).$$
Let us express each Cullis determinant as a sum of minors applying Corollary~\ref{lem:DetNKAsSumProj}.
\begin{multline*}
\det_{(n-k+2)\,2}(K) - \det_{(n-k+2)\,2}(L)  - \det_{(n-k+2)\,2}(M)  + \det_{(n-k+2)\,2}(N)\\
= \sum_{c \in \binom{[n-k+2]}{2}}\sgn_{[n]}(c) \biggl(\det_2\Bigl(K[c|)\Bigr) - \det_2\Bigl(L[c|)\Bigr)\\
 - \det_2\Bigl(M[c|)\Bigr) + \det_2\Bigl(N[c|)\Bigr)\biggr).
\end{multline*}

Note that if $c \neq \{1, l\}$, then
\[
\det_2\Bigl(K[c|)\Bigr) - \det_2\Bigl(L[c|)\Bigr) - \det_2\Bigl(M[c|)\Bigr) + \det_2\Bigl(N[c|)\Bigr) = 0
\]
because if $c \cap \{1, l\} = \varnothing$, then
\[
\det_2\Bigl(K[c|)\Bigr) = \det_2\Bigl(L[c|)\Bigr) = \det_2\Bigl(M[c|)\Bigr) = \det_2\Bigl(N[c|)\Bigr),
\]
and if $c \cap \{1, l\} = 1$ or $l$, then
\[
\det_2\Bigl(K[c|)\Bigr) = \det_2\Bigl(M[c|)\Bigr)\quad \mbox{and}\quad \det_2\Bigl(L[c|)\Bigr) = \det_2\Bigl(N[c|)\Bigr)
\]
or
\[
\det_2\Bigl(K[c|)\Bigr) = \det_2\Bigl(L[c|)\Bigr) \quad \mbox{and}\quad \det_2\Bigl(M[c|)\Bigr) = \det_2\Bigl(N[c|)\Bigr),
\]
correspondingly. It follows
\begin{multline}\label{lem:Exists2DiffDiff:eqq1}
(-1)^{l'}\det_{(n-k+4)\, 4}(Y)
= (-1)^{l + 1 - 1 - 2}\biggl(\det_2\Bigl(K[1,l|)\Bigr) - \det_2\Bigl(L[1,l|)\Bigr)\\
 - \det_2\Bigl(M[1,l|)\Bigr) + \det_2\Bigl(N[1,l|)\Bigr)\biggr).
\end{multline}

Let us simplify the right-hand side of~\eqref{lem:Exists2DiffDiff:eqq1}. 
\begin{enumerate}[label=\textbf{\large\roman*.}, ref=\textbf{\roman*.}]
\item Express all the $2\times2$-determinants in terms of entries of $X$
\begin{multline}\label{lem:Exists2DiffDiff:eqq2}
\det_2\Bigl(K[1,l|)\Bigr) - \det_2\Bigl(L[1,l|)\Bigr) - \det_2\Bigl(M[1,l|)\Bigr) + \det_2\Bigl(N[1,l|)\Bigr)\\
=
\begin{vmatrix}x_{2\, 1} & x_{2\, 2}\\
x_{(l+1)\, 1} & x_{(l+1)\, 2}
\end{vmatrix} -
\begin{vmatrix}x_{1\, 1} & x_{1\, 2}\\
x_{(l+1)\, 1} & x_{(l+1)\, 2}
\end{vmatrix} -
\begin{vmatrix}x_{2\, 1} & x_{2\, 2}\\
x_{l\, 1} & x_{l\, 2}
\end{vmatrix} +
\begin{vmatrix}x_{1\, 1} & x_{1\, 2}\\
x_{l\, 1} & x_{l\, 2}
\end{vmatrix}.
\end{multline}
\item Using the multilinearity of $2\times2$-determinant along the first row, we obtain
\begin{multline}\label{lem:Exists2DiffDiff:eqq3}
\begin{vmatrix}x_{2\, 1} & x_{2\, 2}\\
x_{(l+1)\, 1} & x_{(l+1)\, 2}
\end{vmatrix} -
\begin{vmatrix}x_{1\, 1} & x_{1\, 2}\\
x_{(l+1)\, 1} & x_{(l+1)\, 2}
\end{vmatrix} -
\begin{vmatrix}x_{2\, 1} & x_{2\, 2}\\
x_{l\, 1} & x_{l\, 2}
\end{vmatrix} +
\begin{vmatrix}x_{1\, 1} & x_{1\, 2}\\
x_{l\, 1} & x_{l\, 2}
\end{vmatrix}\\
= \begin{vmatrix}x_{2\, 1} - x_{1\,1}& x_{2\, 2} - x_{1\,2}\\
x_{(l+1)\, 1} & x_{(l+1)\, 2}
\end{vmatrix} -
\begin{vmatrix}x_{2\, 1} - x_{1\,1}& x_{2\, 2} - x_{1\,2}\\
x_{l\, 1} & x_{l\, 2}
\end{vmatrix}
\end{multline}
\item Using the multilinearity of $2\times2$-determinant along the second row, we have
\begin{multline}\label{lem:Exists2DiffDiff:eqq4}
\begin{vmatrix}x_{2\, 1} - x_{1\,1}& x_{2\, 2} - x_{1\,2}\\
x_{(l+1)\, 1} & x_{(l+1)\, 2}
\end{vmatrix} -
\begin{vmatrix}x_{2\, 1} - x_{1\,1}& x_{2\, 2} - x_{1\,2}\\
x_{l\, 1} & x_{l\, 2}
\end{vmatrix}\\
= \begin{vmatrix}x_{2\, 1} - x_{1\,1}& x_{2\, 2} - x_{1\,2}\\
x_{(l+1)\, 1} - x_{l\, 1}& x_{(l+1)\, 2} - x_{l\, 2}
\end{vmatrix} = \begin{vmatrix}x_{1\, 1} - x_{2\,1}& x_{1\, 2} - x_{2\,2}\\
x_{l\, 1} - x_{(l+1)\, 1}& x_{l\, 2} - x_{(l+1)\, 2}
\end{vmatrix}.
\end{multline}
\end{enumerate}
Thus,
\[
(-1)^{l' + l}\det_{(n-k+4)\, 4}(Y)
= \begin{vmatrix}x_{1\, 1} - x_{2\,1}& x_{1\, 2} - x_{2\,2}\\
x_{l\, 1} - x_{(l+1)\, 1}& x_{l\, 2} - x_{(l+1)\, 2}
\end{vmatrix}
\]
by~\eqref{lem:Exists2DiffDiff:eqq1} and~\eqref{lem:Exists2DiffDiff:eqq2}--\eqref{lem:Exists2DiffDiff:eqq4}. This implies that if $B$ is obtained from $A$ by multiplying its last column by $(-1)^{l+l'},$ then
\[
\det_{n\, k} (X|B) = (-1)^{l+l'}\det_{n\, k} (X|A) =
\begin{vmatrix}
x_{1\, 1} - x_{2\, 1}& x_{1\, 2} - x_{2\, 2}\\
x_{l\, 1} - x_{(l+1)\, 1}& x_{l\, 2} - x_{(l+1)\,2}
\end{vmatrix}.
\]
\end{proof}

\begin{remark}
If $X \in \M_{n\,2}(\F)$ is such that $\det_{n\,k}(X|B) = 0$ for all $B \in \M_{n\,(k-2)}(\F)$ and $B$ is chosen as it is done in the above lemma, then the condition $\det_{n\,k}(X|B) = 0$ does not imply that $\rk\Bigl(X[1,2|)\Bigr) = 1$ because if $X = \begin{psmallmatrix}1 & 0\\ 2 & 1\\ 3 & 2\\ \vdots & \vdots \\ n-1 & n \end{psmallmatrix} \in \M_{n\,2}(\F)$, then all the $2\times 2$-determinants formed by the differences of consecutive rows of $X$ are zero but $\rk \Bigl(X[1,2|)\Bigr) = 2$ in every ground field. The next two lemmas provide us additional matrices in $\M_{n\,(k-2)}(\F)$ which are used to conclude that  $\rk \Bigl(X[1,2|)\Bigr) = 2$.
\end{remark}
\begin{lemma}\label{lem:Exists2DiffSum}
Assume that $n\ge k \ge 3$. Let $X \in \M_{n\, 2}(\F).$ Then there exists $B \in \M_{n\, (k-2)}(\F)$ such that
\[
\det_{n\, k} (X | B) = \sum_{2 < l \le n-k+3} (-1)^{l}
\begin{vmatrix}
x_{1\, 1} - x_{1\, 2} & x_{2\, 1} - x_{2\, 2}\\
x_{1\, l} &  x_{2\, l}\\
\end{vmatrix}.
\]
\end{lemma}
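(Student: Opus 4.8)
The plan is to follow the same strategy as in the proof of Lemma~\ref{lem:Exists2DiffDiff}: choose an auxiliary matrix $A\in\M_{n\,(k-2)}(\F)$ built from the basis matrices $E_{i\,j}$ so that iterated Laplace expansion (Lemma~\ref{lem:DetNKLaplaceExp}) of $\det_{n\,k}(X|A)$ along its last columns collapses it to a small Cullis determinant that can be computed directly, and then rescale one column of $A$ to absorb the overall sign produced by the reduction, obtaining the required $B$.

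Concretely, I would put $i_\alpha=n-k+3+\alpha$ for $\alpha=1,\dots,k-3$, so that $\{i_1,\dots,i_{k-3}\}=\{n-k+4,\dots,n\}$, and take
\[
A=E_{1\,1}+E_{2\,1}+\sum_{\alpha=1}^{k-3}E_{i_\alpha\,(\alpha+1)}\in\M_{n\,(k-2)}(\F)
\]
(the sum being empty when $k=3$). The last $k-3$ columns of $X|A$ each have a single nonzero entry, equal to $1$, and these entries sit in the distinct rows $i_1,\dots,i_{k-3}$, none of which is $1$ or $2$; applying Lemma~\ref{lem:DetNKLaplaceExp} along the last column $k-3$ times therefore produces a single surviving summand at each step, so that
\[
\det_{n\,k}(X|A)=(-1)^{s}\det_{(n-k+3)\,3}(Y),
\]
where $s\in\N$ is an exponent determined by the positions of the deleted rows and columns (computed by the same bookkeeping as in Lemma~\ref{lem:Exists2DiffDiff}), and $Y$ is obtained from $X|A$ by striking out rows $i_1,\dots,i_{k-3}$ together with the last $k-3$ columns, i.e.
\[
Y=\begin{pmatrix}
x_{1\,1} & x_{1\,2} & 1\\
x_{2\,1} & x_{2\,2} & 1\\
x_{3\,1} & x_{3\,2} & 0\\
\vdots & \vdots & \vdots\\
x_{(n-k+3)\,1} & x_{(n-k+3)\,2} & 0
\end{pmatrix}\in\M_{(n-k+3)\,3}(\F).
\]

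Next I would evaluate $\det_{(n-k+3)\,3}(Y)$. Expanding along the last column of $Y$, which has $1$ in rows $1$ and $2$ and $0$ elsewhere, via Lemma~\ref{lem:DetNKLaplaceExp} gives
\[
\det_{(n-k+3)\,3}(Y)=\det_{(n-k+2)\,2}\bigl(Y(1|3)\bigr)-\det_{(n-k+2)\,2}\bigl(Y(2|3)\bigr).
\]
The matrices $Y(1|3)$ and $Y(2|3)$ have the same last $n-k+1$ rows, namely rows $3,\dots,n-k+3$ of $X$, and their first rows are, respectively, the second and the first row of $X$. Writing each of these two Cullis determinants as an alternating sum of $2\times2$ basic minors via Corollary~\ref{cor:CullisAsSumDet}, all minors not using the first row cancel in the difference; for the surviving minors the bilinearity of the $2\times2$ determinant in its first row replaces the pair $\bigl(\text{row }2\text{ of }X,\ \text{row }l\text{ of }X\bigr)$ by $\bigl(\text{row }1\text{ of }X-\text{row }2\text{ of }X,\ \text{row }l\text{ of }X\bigr)$, and collecting the signs $\sgn_{[n-k+2]}(\{1,q\})$ under the substitution $l=q+1$ yields
\[
\det_{(n-k+3)\,3}(Y)=\sum_{2<l\le n-k+3}(-1)^{l}\begin{vmatrix}x_{1\,1}-x_{2\,1} & x_{1\,2}-x_{2\,2}\\ x_{l\,1} & x_{l\,2}\end{vmatrix},
\]
which is the right-hand side of the statement. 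Finally, letting $B$ be $A$ with its first column multiplied by $(-1)^{s}$, linearity of $\det_{n\,k}$ in that column gives $\det_{n\,k}(X|B)=(-1)^{s}\det_{n\,k}(X|A)=(-1)^{2s}\det_{(n-k+3)\,3}(Y)=\det_{(n-k+3)\,3}(Y)$, as required.

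As usual in these arguments, the only real obstacle is the sign bookkeeping: determining the exponent $s$ accumulated through the $k-3$ Laplace expansions, and keeping track of the signs when passing from the basic $2\times2$ minors of $Y(1|3)$ and $Y(2|3)$ to the reindexed sum over $l$. Everything else is a routine application of the Laplace expansion, the minor expansion of Corollary~\ref{cor:CullisAsSumDet}, and multilinearity of the ordinary $2\times2$ determinant, precisely as in the proof of Lemma~\ref{lem:Exists2DiffDiff}.
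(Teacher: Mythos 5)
Your proof is correct and follows essentially the same route as the paper's: the same auxiliary matrix $A=E_{1\,1}+E_{2\,1}+\sum_{\alpha}E_{(n-k+3+\alpha)\,(\alpha+1)}$, the same iterated Laplace reduction to the $(n-k+3)\times 3$ matrix $Y$, the same cancellation of the $2\times 2$ minors avoiding the first row, and the same final column rescaling by $(-1)^{s}$. If anything, your closing formula $\sum_{2<l\le n-k+3}(-1)^{l}\bigl|\cdots\bigr|$ retains the alternating signs exactly as in the lemma statement, whereas the paper's own intermediate display drops the factor $(-1)^{1+l-1-2}$ and shifts the row index, so your sign bookkeeping is the more careful of the two.
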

\begin{proof}
Let $A \in \M_{n\, (k-2)}(\F)$ be defined by
\[
A = E_{1\, 1} + E_{2\, 1} + \left(E_{(n-k+4)\, 2} + \ldots + E_{n\, (k-2)}\right) = E_{1\, 1} + E_{2\, 1} + \sum_{i=1}^{k-3}E_{(n-k+3+i)\,i+1}.
\]
It follows from the Laplace expansion of $\det_{n\, k} (X | A)$ along the last column $k-3$ times that
\begin{equation}\label{eq:DetXBDetZ}
\det_{n\, k} (X | A) = (-1)^s \det_{(n-k+3)\, 3} (Y)
\end{equation}
for some $s \in \mathbb N$ and
\[
Y =
\begin{pmatrix}
x_{1\, 1} & x_{1\, 2} & 1\\
x_{2\, 1} & x_{2\, 2} & 1\\
\vdots & \vdots & \vdots\\
x_{(n-k+3)\, 1} & x_{(n-k+3)\, 2} & 0\\
\end{pmatrix}.
\]
Then Laplace expansion along the last column of $Y$ implies that
\begin{multline*}
\det_{(n-k+3)\, 3} (Y)\\
 = \begin{vmatrix}
x_{2\, 1} & x_{2\, 2}\\
\vdots & \vdots\\
x_{(n-k+3)\, 1} & x_{(n-k+3)\, 2}\\
\end{vmatrix}_{(n-k+2)\, 2}
-
\begin{vmatrix}
x_{1\, 1} & x_{1\, 2}\\
\vdots & \vdots \\
x_{(n-k+3)\, 1} & x_{(n-k+3)\, 2}\\
\end{vmatrix}_{(n-k+2)\, 2}\\
 = \det_{(n-k+2)\,2}(Y') - \det_{(n-k+2)\,2}(Y''),
\end{multline*}
where $Y' = Y(1|3),\; Y'' = Y(2|3)$.
Let us express each Cullis determinant as a sum of minors following Corollary~\ref{lem:DetNKAsSumProj}.
\begin{multline*}
\det_{(n-k+2)\,2}(Y') - \det_{(n-k+2)\,2}(Y'')\\
= \sum_{c \in \binom{[n-k+2]}{2}}\sgn_{[n]}(c) \left(\det_2\Bigl(Y'[c|)\Bigr) - \det_2\Bigl(Y''[c|)\Bigr)\right).
\end{multline*}

Note that if $1 \not\in c$, then $Y'[c|) = Y''[c|)$ and consequently $\det_2\Bigl(Y'[c|)\Bigr) = \det_2\Bigl(Y''[c|)\Bigr)$. Hence, 
\[\det_2\Bigl(Y'[c|)\Bigr) - \det_2\Bigl(Y''[c|)\Bigr) = 0\]
 for all $c \in \binom{[n-k+2]}{2}$ not containing 1.  Thus,
\begin{multline*}
\det_{(n-k+3)\, 3} (Y) = \det_{(n-k+2)\,2}(Y') - \det_{(n-k+2)\,2}(Y'')\\
= \sum_{c \in \binom{[n-k+2]}{2}}\sgn_{[n]}(c) \left(\det_2\Bigl(Y'[c|)\Bigr) - \det_2\Bigl(Y''[c|)\Bigr)\right)\\
 = \sum_{\substack{c \in \binom{[n-k+2]}{2}\\1 \not\in c}}\sgn_{[n]}(c) \left(\det_2\Bigl(Y'[c|)\Bigr) - \det_2\Bigl(Y''[c|)\Bigr)\right)\\ + \sum_{\substack{c \in \binom{[n-k+2]}{2}\\1 \in c}}\sgn_{[n]}(c) \left(\det_2\Bigl(Y'[c|)\Bigr) - \det_2\Bigl(Y''[c|)\Bigr)\right)\\
  = 0 + \sum_{\substack{c \in \binom{[n-k+2]}{2}\\1 \in c}}\sgn_{[n]}(c) \left(\det_2\Bigl(Y'[c|)\Bigr) - \det_2\Bigl(Y''[c|)\Bigr)\right)\\
= \sum_{2 \le l \le n - k + 2}(-1)^{1 + l - 1 - 2}\left(
\begin{vmatrix}
x_{2\, 1} & x_{2\, 2}\\
x_{l\, 1} &  x_{l\, 2}\\
\end{vmatrix} -
\begin{vmatrix}
x_{1\, 1} & x_{1\, 2}\\
x_{l\, 1} &  x_{l\, 2}\\
\end{vmatrix}
\right).
\end{multline*}
The multilinearity of $2\times 2$-determinant along the first row implies that
\begin{multline*}
\sum_{2 \le l \le n - k + 2}(-1)^{1 + l - 1 - 2}\left(
\begin{vmatrix}
x_{2\, 1} & x_{2\, 2}\\
x_{l\, 1} &  x_{l\, 2}\\
\end{vmatrix} -
\begin{vmatrix}
x_{1\, 1} & x_{1\, 2}\\
x_{l\, 1} &  x_{l\, 2}\\
\end{vmatrix}
\right)\\
= \sum_{2 \le l \le n - k + 2}\begin{vmatrix}
x_{2\, 1} - x_{1\, 1} & x_{2\, 2} - x_{1\, 2}\\
x_{l\, 1} &  x_{l\, 2}\\
\end{vmatrix}.
\end{multline*}

From this we conclude that if $B$ obtained from $A$ by multiplying its last column by $(-1)^{(s+1)},$ then
\begin{multline*}
\det_{n\, k} (X|B) = (-1)^{s+1}\det_{n\, k} (X|A) =
-\det_{(n-k+3)\, 3} (Y)\\
 =  \sum_{2 \le l \le n - k + 2}\begin{vmatrix}
x_{1\, 1} - x_{2\, 1} & x_{1\, 2} - x_{2\, 2}\\
x_{l\, 1} &  x_{l\, 2}\\
\end{vmatrix}.
\end{multline*}
\end{proof}

\begin{lemma}\label{lem:Exists2Sum}
Let $X \in \M_{n\, 2}.$ Then there exists $B \in \M_{n\, (k-2)}(\F)$ such that
\begin{multline}\label{eq:LemmaX2KExpansionStatement}
\det_{n\, k} (X | B) =
\begin{vmatrix}
x_{1\, 1} & x_{1\, 2}\\
x_{2\, 1} & x_{2\, 2}
\end{vmatrix}
 +
\sum_{l = 3}^{n-k+2} (-1)^{l-2}
\begin{vmatrix}
x_{1\, 1} - x_{2\, 1}& x_{1\, 2} - x_{2\, 2}\\
x_{l\, l} &  x_{l\, 2}\\
\end{vmatrix}\\
+ \sum_{2 < l < m \le n-k+2} (-1)^{l + m - 3}
\begin{vmatrix}
x_{l\, 1} &  x_{l\, 2}\\
x_{m\, 1} &  x_{m\, 2}\\
\end{vmatrix}.
\end{multline}
\end{lemma}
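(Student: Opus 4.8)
The plan is to mimic the proof of Lemma~\ref{lem:Exists2DiffSum} but to choose the auxiliary matrix $A$ so that the block of ones in the last ``collapsed'' column occupies only the \emph{first two rows}, leaving \emph{all} of the remaining rows $3, 4, \ldots, n-k+2$ free. Concretely, I would set
\[
A = E_{1\,1} + E_{2\,1} + \sum_{i=1}^{k-3} E_{(n-k+3+i)\,(i+1)} \in \M_{n\,(k-2)}(\F),
\]
exactly as in Lemma~\ref{lem:Exists2DiffSum}, so that applying the Laplace expansion of $\det_{n\,k}(X|A)$ along the last column $k-3$ times strikes out rows $n-k+4, \ldots, n$ and columns $4, \ldots, k$, producing $\det_{n\,k}(X|A) = (-1)^s \det_{(n-k+3)\,3}(Y)$ for the same
\[
Y = \begin{pmatrix} x_{1\,1} & x_{1\,2} & 1 \\ x_{2\,1} & x_{2\,2} & 1 \\ \vdots & \vdots & \vdots \\ x_{(n-k+3)\,1} & x_{(n-k+3)\,2} & 0 \end{pmatrix}.
\]
(Here $\det_{n\,k}$ is a slight abuse in the $n-k+3$-row determinant; I mean $\det_{(n-k+3)\,3}$.) So far this is identical to Lemma~\ref{lem:Exists2DiffSum}, and the difference will come entirely from how I evaluate $\det_{(n-k+3)\,3}(Y)$.

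The key step is a different evaluation of $\det_{(n-k+3)\,3}(Y)$. Rather than a single Laplace expansion along the last column, I would first use Corollary~\ref{lem:DetNKAsSumProj} to write $\det_{(n-k+3)\,3}(Y)$ as the alternating sum $\sum_{c} \sgn_{[n]}(c)\det_3(Y[c|))$ over $c \in \binom{[n-k+3]}{3}$, and then classify the triples $c = \{a,b,d\}$ with $a<b<d$ according to how many of $\{1,2\}$ they contain. If $c = \{1,2,l\}$ with $l \ge 3$, the $3\times 3$ minor has last column $(1,1,0)^t$, and expanding gives $\det_2\bigl(Y[\{1,l\}|1,2]\bigr) - \det_2\bigl(Y[\{2,l\}|1,2]\bigr)$, which by multilinearity along the first row is the ``difference-of-consecutive-rows'' minor $\begin{vmatrix} x_{1\,1}-x_{2\,1} & x_{1\,2}-x_{2\,2} \\ x_{l\,1} & x_{l\,2}\end{vmatrix}$; these produce the middle sum in~\eqref{eq:LemmaX2KExpansionStatement}. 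If $c$ contains exactly one of $\{1,2\}$, the minor vanishes (last column has a single $1$ in the top entry, so the $3\times3$ determinant reduces to a $2\times2$ minor of rows $\subseteq\{3,\ldots\}$ in columns $1,2$ that cancels against its partner after expanding along the first row, exactly as two triples $\{1,l,m\}$ and $\{2,l,m\}$ contribute with opposite signs and—after row-linearity—combine into $\begin{vmatrix} x_{l\,1} & x_{l\,2} \\ x_{m\,1} & x_{m\,2}\end{vmatrix}$ with a residual sign, giving the last double sum). If $c \subseteq \{3,\ldots,n-k+3\}$, the last column is zero, so $\det_3(Y[c|)) = 0$. Finally $c = \{1,2\}$ is impossible for a triple, but the term $\begin{vmatrix} x_{1\,1} & x_{1\,2}\\ x_{2\,1} & x_{2\,2}\end{vmatrix}$ in~\eqref{eq:LemmaX2KExpansionStatement} should emerge from the $c = \{1,2,?\}$ bookkeeping or from an $n-k+3 = ?$ boundary case; I would track it as the ``$d$ absent'' degenerate contribution and absorb it by keeping the index range honest. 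Then, as in Lemma~\ref{lem:Exists2DiffSum}, I take $B$ to be $A$ with its last column scaled by an appropriate sign $(-1)^{s+\epsilon}$ to match the stated normalization, so that $\det_{n\,k}(X|B)$ equals the right-hand side of~\eqref{eq:LemmaX2KExpansionStatement}.

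The main obstacle I anticipate is \emph{sign bookkeeping}: getting $\sgn_{[n]}(c)$ for each triple right, tracking the cumulative sign $(-1)^s$ from the $k-3$ iterated Laplace steps, and verifying that after the row-multilinearity manipulations the coefficients of the three groups of minors come out exactly as $+1$, $(-1)^{l-2}$, and $(-1)^{l+m-3}$ respectively—rather than off by an overall sign or by a sign depending on the parity of $n-k$. A secondary subtlety is the appearance of the ``pure'' minor $\begin{vmatrix} x_{1\,1} & x_{1\,2}\\ x_{2\,1} & x_{2\,2}\end{vmatrix}$ with coefficient $+1$: it is the only term not coming from a difference of rows, so I would double-check that it arises from the triple(s) $\{1,2,l\}$ \emph{before} the first-row-linearity step is applied uniformly, i.e.\ from the part of $\det_2\bigl(Y[\{1,l\}|1,2]\bigr) - \det_2\bigl(Y[\{2,l\}|1,2]\bigr)$ where $l$ is the smallest index $3$ combined with the $\{1,2,\cdot\}$ structure, or else from a genuinely separate contribution of the first two rows that I should isolate at the outset by splitting off $c \ni \{1,2\}$ explicitly. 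Once the sign accounting is pinned down the rest is the same routine $2\times2$ multilinearity already used twice above, so I would present it compactly, deferring to the pattern established in Lemmas~\ref{lem:Exists2DiffDiff} and~\ref{lem:Exists2DiffSum}.
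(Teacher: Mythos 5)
Your choice of the auxiliary matrix $A$ is the wrong one, and this is a structural problem, not a sign-bookkeeping issue. With
$A = E_{1\,1}+E_{2\,1}+\sum_{i=1}^{k-3}E_{(n-k+3+i)\,(i+1)}$ you are performing \emph{literally the same computation} as in Lemma~\ref{lem:Exists2DiffSum}, so $\det_{n\,k}(X|A)$ must come out equal (up to sign) to the expression of that lemma, namely $\sum_{l}\begin{vmatrix}x_{1\,1}-x_{2\,1} & x_{1\,2}-x_{2\,2}\\ x_{l\,1} & x_{l\,2}\end{vmatrix}$ --- which contains neither the pure minor $\begin{vmatrix}x_{1\,1} & x_{1\,2}\\ x_{2\,1} & x_{2\,2}\end{vmatrix}$ nor the double sum over $2<l<m$. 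Your own case analysis already shows where this breaks: for a triple $c=\{1,l,m\}$ versus $c=\{2,l,m\}$ with $l,m\ge 3$, the last column of $Y[c|)$ is $(1,0,0)^t$ in both cases, so both $3\times 3$ minors equal the \emph{same} quantity $\det_2\bigl(Y[\{l,m\}|1,2]\bigr)$, while $\sgn(\{1,l,m\})=-\sgn(\{2,l,m\})$; these two contributions therefore cancel exactly and cannot ``combine into'' the $\{l,m\}$ minor with a residual sign as you claim. Likewise the term $\begin{vmatrix}x_{1\,1} & x_{1\,2}\\ x_{2\,1} & x_{2\,2}\end{vmatrix}$ has no source in your expansion; it is not a recoverable ``boundary case.''

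The paper's proof uses a different $A$, namely $A = E_{(n-k+3)\,1}+E_{(n-k+4)\,2}+\ldots+E_{n\,(k-2)}$, with \emph{no} support in rows $1,\ldots,n-k+2$ and no doubled entries. Laplace expansion along the last column $k-2$ times then reduces $\det_{n\,k}(X|A)$ to $(-1)^s\det_{(n-k+2)\,2}(Y)$ where $Y=X(n-k+3,\ldots,n|)$ is the untouched top block of $X$; Corollary~\ref{cor:CullisAsSumDet} applied to this $(n-k+2)\times 2$ matrix gives the full alternating sum of \emph{all} its $2\times 2$ minors, which, after grouping the $\{1,2\}$ term, the $\{1,l\}$ and $\{2,l\}$ terms (combined by row multilinearity), and the $\{l,m\}$ terms with $l,m\ge 3$, is precisely the right-hand side of~\eqref{eq:LemmaX2KExpansionStatement}. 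The essential point your proposal misses is that to retain the $\{1,2\}$ minor and the $\{l,m\}$ minors you must \emph{not} place any of the auxiliary columns' support in the first two rows.
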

\begin{proof}The statement of the lemma clearly holds if $k = 2$ because in this case the expression~\eqref{eq:LemmaX2KExpansionStatement} is just a standard expansion of Cullis' determinant for $k = 2$ by Corollary~\ref{cor:CullisAsSumDet}.

Assume that $k \ge 3$. Let $A$ be defined by
\[
A = E_{(n-k+3)\, 1} + \ldots + E_{n\, (k-2)}.
\]

It follows from the Laplace expansion of $\det_{n\, k} (X | A)$  along the last column $k-2$ times that
\begin{equation*}
\det_{n\, k} (X | A) = (-1)^s \det_{(n-k+2)\, 2} (Y),
\end{equation*}
where $s \in \mathbb N$ and
\[
Y = \begin{pmatrix}
x_{1\, 1} & x_{1\, 2}\\
\vdots & \vdots\\
x_{(n-k+2)\, 1} & x_{(n-k+2)\, 2}\\
\end{pmatrix}.
\]

By Corollary~\ref{cor:CullisAsSumDet} we obtain that
\begin{multline*}
\det_{n\, k} (Y) =
\begin{vmatrix}
x_{1\, 1} & x_{1\, 2}\\
x_{2\, 1} & x_{2\, 2}
\end{vmatrix}
 +
\sum_{l = 3}^{n-k+2} (-1)^{l-2}
\begin{vmatrix}
x_{1\, 1} - x_{2\, 1}& x_{1\, 2} - x_{2\, 2}\\
x_{l\, l} &  x_{l\, 2}\\
\end{vmatrix}\\
+ \sum_{2 < l < m \le n-k+2} (-1)^{l + m - 3}
\begin{vmatrix}
x_{l\, 1} &  x_{l\, 2}\\
x_{m\, 1} &  x_{m\, 2}\\
\end{vmatrix}.
\end{multline*}
From this we conclude that if $B$ is obtained from $A$ by multiplying its last column by $(-1)^{s}$, then the desired equality~\eqref{eq:LemmaX2KExpansionStatement} holds.
\end{proof}


Thus, using the expressions obtained above, we prove in the lemma below that $\det_{n\, k} (X | B) = 0$ for all $B \in \M_{n\, k-2}(\F)$ implies $\rk(X) \le 1$. This lemma comprises the main part of the proof of the converse of Lemma~\ref{lem:Rank1CullisDeg1}.

\begin{lemma}\label{lem:AllZeroRank1}Let $n \ge k + 2$, $k \ge 4$ and $X \in \M_{n\, 2}(\F)$. Assume that
\begin{equation}\label{lem:AllZeroRank1:eqst}
\det_{n\, k} (X | A) = 0
\end{equation}
for all $A \in \M_{n\, k-2}(\F).$ Then $\rk(X) \le 1.$

\end{lemma}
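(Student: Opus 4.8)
The plan is to show that the hypothesis forces every $2\times 2$ minor $X[l,m\mid]$ of the two-column matrix $X$ to vanish, which is equivalent to $\rk(X)\le 1$. The three lemmas just proved (Lemma~\ref{lem:Exists2DiffDiff}, Lemma~\ref{lem:Exists2DiffSum}, Lemma~\ref{lem:Exists2Sum}) each exhibit a specific $B\in\M_{n\,(k-2)}(\F)$ for which $\det_{n\,k}(X\mid B)$ equals a prescribed linear combination of $2\times 2$ minors built from the rows of $X$; under the hypothesis~\eqref{lem:AllZeroRank1:eqst} each such combination is $0$. So the strategy is purely linear-algebraic: collect the family of linear relations among the minors $m_{ij}:=\det_2(X[i,j\mid))$ that these three lemmas yield (as $l$, and in Lemma~\ref{lem:Exists2DiffDiff} also $l$, range over the admissible values), and deduce that all $m_{ij}=0$.

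First I would record what each lemma gives after setting its right-hand side to zero. Writing $d_{i}:=X(|1]_i - X(|2]_i$ (the $i$-th coordinate of the difference of the two columns of $X$) is convenient: Lemma~\ref{lem:Exists2DiffDiff} says $\det_2\begin{psmallmatrix} d_1 & * \\ d_l & *\end{psmallmatrix}=0$, i.e. the relation $d_1 x_{l+1\,2}-d_1 x_{l\,2}= \cdots$; more usefully, after the multilinearity manipulations in that lemma its content is $\det_2$ of the two rows $(x_{1\,1}-x_{2\,1},\,x_{1\,2}-x_{2\,2})$ and $(x_{l\,1}-x_{l+1\,1},\,x_{l\,2}-x_{l+1\,2})$ is zero for all $3\le l\le n-1$ (with $l<n$). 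Lemma~\ref{lem:Exists2DiffSum} gives $\sum_{2<l\le n-k+3}(-1)^l\,\det_2\begin{psmallmatrix} x_{1\,1}-x_{2\,1} & x_{1\,2}-x_{2\,2}\\ x_{l\,1} & x_{l\,2}\end{psmallmatrix}=0$, and Lemma~\ref{lem:Exists2Sum} gives $m_{12}+\sum_{l=3}^{n-k+2}(-1)^{l-2}\det_2\begin{psmallmatrix} x_{1\,1}-x_{2\,1} & x_{1\,2}-x_{2\,2}\\ x_{l\,1} & x_{l\,2}\end{psmallmatrix}+\sum_{2<l<m\le n-k+2}(-1)^{l+m-3}m_{lm}=0$. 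Note that by the semi-cyclic shift invariance (Lemma~\ref{lem:PermKNEvenSemicyclic}, which requires $n+k$ even — hence the hypothesis $n\ge k+2$ together with the ambient parity assumption, or one can apply $\SCS_{i\,j}$) one is free to cyclically relabel the rows, so all three lemmas actually hold with the roles of rows $1,2$ replaced by any pair of consecutive rows $p,p+1$, and with the index ranges shifted accordingly. Applying Lemma~\ref{lem:Exists2DiffDiff} in this rotated form gives $\det_2$ of $(d_p,\,x_{p\,2}-x_{p+1\,2}$-type rows$)$ vanishing for all pairs of consecutive-difference rows, which already forces the vector of differences-of-consecutive-rows of $X$ to be "rank one" — but as the Remark warns, that alone is not enough (the example $(1,0),(2,1),(3,2),\dots$ has all such determinants zero yet $\rk X=2$). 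So the extra input from Lemma~\ref{lem:Exists2DiffSum} and Lemma~\ref{lem:Exists2Sum} is essential.

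The combinatorial core is then an inductive elimination. I would argue: from the rotated Lemma~\ref{lem:Exists2DiffDiff} relations, all minors of the form $\det_2$ of two "consecutive-difference" rows vanish; feeding this into the rotated Lemma~\ref{lem:Exists2DiffSum} relation collapses its alternating sum to a single term, yielding $\det_2\begin{psmallmatrix} x_{p\,1}-x_{p+1\,1} & x_{p\,2}-x_{p+1\,2}\\ x_{q\,1} & x_{q\,2}\end{psmallmatrix}=0$ for suitable ranges of $p,q$; and finally the rotated Lemma~\ref{lem:Exists2Sum} relation, with all the mixed terms $m_{lm}$ now known to vanish (they are differences of the just-killed quantities) and the difference-row terms also vanishing, collapses to $m_{p\,p+1}=0$, i.e. every minor on two consecutive rows is zero. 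Since any two rows are connected by a chain of consecutive pairs and $\det_2(X[i,j\mid))$ is determined by the "first two nonzero-type" structure — more precisely, once all consecutive $2\times 2$ minors vanish, the row span of $X^t$ is one-dimensional — we get $\rk(X)\le 1$.

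The main obstacle will be the bookkeeping in this elimination: the index ranges in the three lemmas are slightly different ($n-1\ge l>2$ versus $2<l\le n-k+3$ versus $3\le l\le n-k+2$), so I must check that the rotations (via $\SCS_{i\,j}$ or Lemma~\ref{lem:PermKNEvenSemicyclic}) give enough relations to cover every pair of rows $\{i,j\}$ with $1\le i<j\le n$, and that the alternating signs line up so that the collapses actually produce a single surviving minor rather than a nontrivial cancellation. The hypothesis $n\ge k+2$ (so $n-k+2\ge 4$, giving at least two free rows in the "tail" constructions) and $k\ge 4$ (needed already for Lemma~\ref{lem:Exists2DiffDiff}) are exactly what make all the required ranges nonempty; I would verify this range-chasing carefully and otherwise the argument is elementary $2\times 2$ linear algebra.
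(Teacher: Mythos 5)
Your overall strategy coincides with the paper's: extract linear relations among the $2\times 2$ minors of $X$ from Lemma~\ref{lem:Exists2DiffDiff}, Lemma~\ref{lem:Exists2DiffSum} and Lemma~\ref{lem:Exists2Sum}, use the oddness of the number of terms in the alternating sum (which, as you noticed, silently requires $n+k$ even, a hypothesis absent from the statement but present in every application) to collapse it to a single term, and use $\SCS_{i\,1}$ to transport the information to other rows. The paper organizes the elimination differently and more economically: it first shows, by contradiction, that $\rk\bigl(X[1,2|)\bigr)\le 1$ --- setting $(\gamma_1,\gamma_2)=\mathrm{row}_1-\mathrm{row}_2\ne(0,0)$, it gets from Lemma~\ref{lem:Exists2DiffDiff} that $\gamma_1x_{l\,2}-\gamma_2x_{l\,1}$ is a constant $v$, from Lemma~\ref{lem:Exists2DiffSum} that $v=0$, hence a nontrivial column relation on rows $3,\dots,n$ killing all minors $m_{lm}$ with $l,m\ge 3$, and then Lemma~\ref{lem:Exists2Sum} forces $\det_2\bigl(X[1,2|)\bigr)=0$ --- and only afterwards applies $\SCS_{i\,1}$ once, to the hypothesis of the lemma, rather than rotating each of the three families of relations. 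Your plan to rotate the relations themselves is workable (the sign changes at the seam of the cyclic shift are harmless for vanishing statements), but it is strictly more bookkeeping, and you have deferred exactly that bookkeeping.

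The concrete gap is your concluding inference: ``once all consecutive $2\times 2$ minors vanish, the row span of $X^t$ is one-dimensional'' is false. Take the rows $(1,0),(0,0),(0,1)$: every consecutive minor is zero, yet the rank is $2$. So arriving at $m_{p\,p+1}=0$ for all $p$ does not finish the proof. The repair is available from your own intermediate step: if you genuinely establish $\det_2\begin{psmallmatrix}x_{p\,1}-x_{(p+1)\,1} & x_{p\,2}-x_{(p+1)\,2}\\ x_{q\,1} & x_{q\,2}\end{psmallmatrix}=0$ for all $p$ and all $q$, then the telescoping identity
\begin{equation*}
\det_2\bigl(X[l,m|)\bigr)=\sum_{p=l}^{m-1}\det_2\begin{pmatrix}x_{p\,1}-x_{(p+1)\,1} & x_{p\,2}-x_{(p+1)\,2}\\ x_{m\,1} & x_{m\,2}\end{pmatrix}
\end{equation*}
kills \emph{every} minor, not just the consecutive ones, and $\rk(X)\le 1$ follows; alternatively, run the whole argument as a contradiction from the existence of a single linearly independent pair of rows, as the paper does (noting that the paper's own ``without loss of generality $j=i+1$'' in part (ii) needs the same care, for exactly the same reason). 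As written, though, your last step does not follow from what precedes it.
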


\begin{proof}The proof is divided into two parts as follows.
\begin{enumerate}[label=(\roman*), ref=(\roman*)]
\item\label{lem:AllZeroRank1:part1} We prove that $\rk \begin{psmallmatrix}x_{1\, 1} & x_{1\, 2}\\ x_{2\, 1} & x_{2\, 2}\end{psmallmatrix} \le 1$ for all $X$ satisfying the condition of the lemma.
\item\label{lem:AllZeroRank1:part2}  We prove that $\rk(X) \le 1$ for all $X$ satisfying the condition of the lemma.
\end{enumerate}
\paragraph{\ref{lem:AllZeroRank1:part1}}By contradiction. Suppose that  $\rk \begin{psmallmatrix}x_{1\, 1} & x_{1\, 2}\\ x_{2\, 1} & x_{2\, 2}\end{psmallmatrix} = 2.$ Let $\gamma_1 = x_{1\, 1} - x_{2\,1}$ and $\gamma_2 = x_{1\, 2} - x_{2\, 2}.$ Then $(\gamma_1,\gamma_2) \neq (0,0)$ because otherwise $\rk \begin{psmallmatrix}x_{1\, 1} & x_{1\, 2}\\ x_{2\, 1} & x_{2\, 2}\end{psmallmatrix} \le 1.$

Let $2 < l < n.$ By Lemma~\ref{lem:Exists2DiffDiff} there exists $B \in \M_{n\, (k-2)}(\F)$ such that
\[
\det_{n\, k} (X | B) =
\begin{vmatrix}
x_{1\, 1} - x_{2\, 1} & x_{1\,2} - x_{2\, 2}\\
x_{l\, 1} - x_{(l+1)\, 1} & x_{l\,2} - x_{(l+1)\, 2}\\
\end{vmatrix}
.
\]
We obtain from our initial assumption  that $\det_{n\, k} (X | B) = 0.$ Hence, \begin{multline*}
\begin{vmatrix}
x_{1\, 1} - x_{2\, 1} & x_{1\,2} - x_{2\, 2}\\
x_{l\, 1} - x_{(l+1)\, 1} & x_{l\,2} - x_{(l+1)\, 2}\\
\end{vmatrix}\\
= \begin{vmatrix}
\gamma_1 & \gamma_2\\
x_{l\, 1} - x_{(l+1)\, 1} & x_{l\,2} - x_{(l+1)\, 2}\\
\end{vmatrix}\\
=
\gamma_1 (x_{l\, 2}-x_{(l+1)\, 2}) - \gamma_2 (x_{l\, 1} - x_{(l + 1)\, 2}) = 0,
\end{multline*}
and therefore
\[
\gamma_1 x_{l\, 2}  - \gamma_2 x_{l\, 1}  =  \gamma_1 x_{(l+1)\, 2} - \gamma_2 x_{(l + 1)\, 2}
\]
for all $2 < l < n.$ Denote the common value of $\gamma_1 x_{l\, 2}  - \gamma_2 x_{l\, 1}$ by $v.$

By Lemma~\ref{lem:Exists2DiffSum} there exists $B \in \M_{n\, (k-2)}(\F)$ such that

\[\det_{n\, k} (X | B) = \sum_{2 < l \le n-k+3} (-1)^{l}
\begin{vmatrix}
x_{1\, 1} - x_{1\, 2} & x_{2\, 1} - x_{2\, 2}\\
x_{1\, l} &  x_{2\, l}\\
\end{vmatrix}\]

It follows from our initial assumption that $\det_{n\, k} (X | B) = 0.$ Hence, 
\begin{multline*}
\sum_{2 < l \le n-k+3} (-1)^{l}
\begin{vmatrix}
x_{1\, 1} - x_{1\, 2} & x_{2\, 1} - x_{2\, 2}\\
x_{1\, l} &  x_{2\, l}\\
\end{vmatrix}
= \sum_{2 < l \le n-k+3} (-1)^{l}
\begin{vmatrix}
\gamma_1 & \gamma_2\\
x_{1\, l} &  x_{2\, l}\\
\end{vmatrix}\\
= \sum_{2 < l \le n-k+3} (-1)^{l} (\gamma_1 x_{l\, 2}  - \gamma_2 x_{l\, 1})
= \sum_{2 < l \le n-k+3} (-1)^{l} v = 0.
\end{multline*}
Now,
$$\sum_{2 < l \le n-k+3} (-1)^{l} v = v$$
because the number of terms in the sum on the right-hand side of this equality is odd and it begins with $v.$ Therefore, $v = 0,$ i.e.
\begin{equation}\label{eq:Gamma1Gamma2}
\gamma_1 x_{i\,2} - \gamma_2 x_{i\, 1} = 0
\end{equation} for all $3 \le i \le n.$

This provides us a non-trivial linear relation between the columns of the matrix
\[
\begin{pmatrix}
x_{3\, 1} & x_{3\, 2}\\
\vdots & \vdots\\
x_{n\, 1} & x_{n\, 2}\\
\end{pmatrix}.
\]
Therefore, \begin{equation}\label{eq:EveryDetZero}
\begin{vmatrix}
x_{l\, 1} &  x_{l\, 2}\\
x_{m\, 1} &  x_{m\, 2}\\
\end{vmatrix} = 0
\end{equation}
 for all $2 < l < m \le n.$

By Lemma~\ref{lem:Exists2Sum} there exist $B \in \M_{n,k-2}(\F)$ such that
\begin{multline*}
\det_{n\, k} (X | B) =
\begin{vmatrix}
x_{1\, 1} & x_{1\, 2}\\
x_{2\, 1} & x_{2\, 2}
\end{vmatrix}
 +
\sum_{l = 3}^{n-k+2} (-1)^{l-2}
\begin{vmatrix}
x_{1\, 1} - x_{2\, 1}& x_{1\, 2} - x_{2\, 2}\\
x_{l\, l} &  x_{l\, 2}\\
\end{vmatrix}\\
+ \sum_{2 < l < m \le n-k+2} (-1)^{l + m - 3}
\begin{vmatrix}
x_{l\, 1} &  x_{l\, 2}\\
x_{m\, 1} &  x_{m\, 2}\\
\end{vmatrix}.
\end{multline*}
By (\ref{eq:Gamma1Gamma2}) and (\ref{eq:EveryDetZero}) the last two summands in this sum are equal to zero. By our initial assumption, $\det_{n\, k} (X|B) = 0.$ Therefore, \[
\det_{n\, k} (X | B) =
\begin{vmatrix}
x_{1\, 1} & x_{1\, 2}\\
x_{2\, 1} & x_{2\, 2}
\end{vmatrix} = 0,
\]
which leads to a contradiction.

\paragraph{\ref{lem:AllZeroRank1:part2}}By contradiction. Suppose that $X \in \M_{n\,2}(\F)$ satisfies the coniditions of the lemma and $\rk (X) = 2.$ Hence, there exist $1 \le i < j \le n$ such that $\rk \begin{psmallmatrix}x_{i\, 1} & x_{i\, 2}\\ x_{j\, 1} & x_{j\, 2}\end{psmallmatrix} = 2.$ We assume without loss of generality that $j = i+1.$ 

Let $X' = \SCS_{i\,1}(X)$ where $\SCS_{i\,1}$ is defined in Definition~\ref{def:SCSDef}. We claim that $X'$ also satisfies the conditions of the lemma. Indeed, since $\SCS_{i\,1}$ commutes with the matrix gluing operation by Lemma~\ref{lem:SCShiftCommutesWithJoin} and $\SCS_{i\,1}$ is invertible by Lemma~\ref{lem:ReduceTo11ByShift}, then
\[
\SCS_{i\,1}(X) | \SCS_{i\,1}\left(\SCS_{i\,1}^{-1}(A)\right) = \SCS_{i\,1}\left(X | \SCS_{i\,1}^{-1}(A)\right) 
\]
for all $A \in \M_{n\, k-2}(\F)$. This implies that
\begin{equation}\label{lem:AllZeroRank1:eq1} 
 \det_{n\,k} (X'|A) = \SCS_{i\,1}(X) | \SCS_{i\,1}\left(\SCS_{i\,1}^{-1}(A)\right) = \SCS_{i\,1}\left(X | \SCS_{i\,1}^{-1}(A)\right)\\
  =  \det_{n\,k}(X | \SCS_{i\,1}(A))
\end{equation}
for all $A \in \M_{n\, k-2}(\F)$ because $\SCS_{i\,1}$ preserves $\det_{n\,k}$ by Lemma~\ref{lem:ReduceTo11ByShift}. Next,
\begin{equation}\label{lem:AllZeroRank1:eq3}
\det_{n\,k}\left(X | \SCS_{i\,1}^{-1}(A)\right) = 0
\end{equation}
for all $A \in \M_{n\, k-2}(\F)$ because $X$ satisfies the conditions of the lemma. By substituting~\eqref{lem:AllZeroRank1:eq3} into~\eqref{lem:AllZeroRank1:eq1} we obtain that
\begin{equation}\label{lem:AllZeroRank1:eq2} 
 \det_{n\,k} (X'|A) = 0.
\end{equation}

The equality~\eqref{lem:AllZeroRank1:eq2} means that $X'$ satisfies the conditions of the lemma. Hence, $\rk\begin{psmallmatrix}
x'_{1\, 1} & x'_{1\, 2}\\
x'_{2\, 1} & x'_{2\, 2}
\end{psmallmatrix} \le 1$ by the part~\ref{lem:AllZeroRank1:part1} of the lemma. Since
\[
\begin{pmatrix}
x_{i\, 1} & x_{i\, 2}\\
x_{i+1\, 1} & x_{i+1\, 2}
\end{pmatrix} = (-1)^{n-i}\begin{pmatrix}
x'_{1\, 1} & x'_{1\, 2}\\
x'_{2\, 1} & x'_{2\, 2}
\end{pmatrix}
\]
by the definition of $X'$, then $\rk\begin{psmallmatrix}
x_{i\, 1} & x_{i\, 2}\\
x_{i+1\, 1} & x_{i+1\, 2}
\end{psmallmatrix} \le 1$. This leads to a contradiction. Therefore, $\rk(X) = 2$.
\end{proof}

\begin{remark}One can see that the statement of the part \ref{lem:AllZeroRank1:part1} of Lemma~\ref{lem:AllZeroRank1} can be reformulated as follows
\begin{equation}
f_1 = 0,\ldots, f_k = 0 \Rightarrow g = 0
\end{equation}
where $g$ denotes a polynomial representing the first $2\times 2$ minor of $X$ and $f_i$ denotes one of the expressions used in the proof, that is, $\det_{n\,k}(X|B)$ in the statements of Lemma~\ref{lem:Exists2DiffDiff}, Lemma~\ref{lem:Exists2DiffSum} and Lemma~\ref{lem:Exists2Sum}.  

Then assuming that $\F$ is algebraically closed and using Hilbert's Nullstellensatz it is possible to conclude that 
\[
g \in \sqrt{I},
\] where $I$ is an ideal in $\F[x_{1\,1}, x_{1\,2}, \ldots, x_{n\,1}, x_{n\,2}]$ generated by $f_1,\ldots, f_k$. Consequently, the polynomial $g^{d}$ belongs to $I$ for some positive integers $d$.

This observation indicates the possibility of alternative proof of Lemma~\ref{lem:CullisDeg1Rank1} by finding an appropriate $d$. But our computations show that if $\Char(\F) > 0$ these exponents should be quite large. For this reason we believe that the proof of Lemma~\ref{lem:CullisDeg1Rank1} could not be significantly simplified. 
\end{remark}

The last thing we need to show before we prove the converse of Lemma~\ref{lem:Rank1CullisDeg1} is that every linear map preserving the Cullis' determinant does not increase the value $\max_A(\deg_{\lambda}(\det_{n\, k} (A + \lambda B)))$. 
%

\begin{lemma}\label{lem:DetNKPresPresLowDeg}Assume that $n \ge k + 2,$ $|\F| > k$ and $n + k$ is even. Let $B \in \M_{n\,k}(\F)$ and $T \colon \M_{n\,k}(\F) \to \M_{n\,k}(\F)$ be a linear map such that $\det_{n\,k}(T(X)) = \det_{n\,k}(X)$ for all $X \in \M_{n\,k}(\F)$. Then
\begin{equation}\label{lem:DetNKPresPresLowDeg:eq1}
\deg_{\lambda}(\det_{n\, k} (A + \lambda B)) \le 1\quad \forall A \in \M_{n\,k}(\F)
\end{equation}
implies that
\begin{equation}\label{lem:DetNKPresPresLowDeg:eq2}
\deg_{\lambda}(\det_{n\, k} (A + \lambda T(B))) \le 1\quad \forall A \in \M_{n\,k}(\F).
\end{equation}
\end{lemma}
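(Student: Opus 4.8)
The plan is to exploit the invertibility of $T$ together with the uniqueness of polynomial representations over a sufficiently large field. First I would invoke Corollary~\ref{cor:LinPresIso}: since $n \ge k+2$, $|\F| > k$ and $n+k$ is even, the assumption that $T$ preserves $\det_{n\,k}$ forces $T$ to be invertible, so $T^{-1}$ is a well-defined linear map on $\M_{n\,k}(\F)$.

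Next, fix an arbitrary $A \in \M_{n\,k}(\F)$ and consider, for each scalar $\lambda \in \F$, the matrix $T^{-1}(A) + \lambda B$. Using linearity of $T$ and the fact that $T$ preserves $\det_{n\,k}$ as a function on $\M_{n\,k}(\F)$, one has
\[
\det_{n\,k}(A + \lambda T(B)) = \det_{n\,k}\bigl(T(T^{-1}(A) + \lambda B)\bigr) = \det_{n\,k}\bigl(T^{-1}(A) + \lambda B\bigr)
\]
for every $\lambda \in \F$. Thus the two polynomials $P(\lambda) := \det_{n\,k}(A + \lambda T(B))$ and $Q(\lambda) := \det_{n\,k}(T^{-1}(A) + \lambda B)$ in $\F[\lambda]$ define the same function on $\F$. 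By Corollary~\ref{cor:DegDetABLEQK} we have $\deg_\lambda P \le k$, while $\deg_\lambda Q \le 1$ by hypothesis~\eqref{lem:DetNKPresPresLowDeg:eq1} applied with $T^{-1}(A)$ in place of $A$. Since $|\F| > k$, both degrees are strictly less than $|\F|$, so Corollary~\ref{DegFGLessEqual} yields $P = Q$ as elements of $\F[\lambda]$; in particular $\deg_\lambda P = \deg_\lambda Q \le 1$. As $A$ was arbitrary, this is exactly~\eqref{lem:DetNKPresPresLowDeg:eq2}.

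The only delicate point --- and the reason for the assumption $|\F| > k$ --- is the passage from pointwise equality of $P$ and $Q$ on $\F$ to their equality as formal polynomials: over a small finite field two distinct univariate polynomials of degree at most $k$ could agree on every element, so the invocation of Corollary~\ref{DegFGLessEqual} genuinely needs the size bound. Everything else is immediate from the invertibility of $T$ (Corollary~\ref{cor:LinPresIso}) and the degree estimate of Corollary~\ref{cor:DegDetABLEQK}, so no substantial computation is needed.
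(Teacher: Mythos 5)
Your proposal is correct and follows essentially the same route as the paper's own proof: invertibility via Corollary~\ref{cor:LinPresIso}, the identity $\det_{n\,k}(A+\lambda T(B))=\det_{n\,k}(T^{-1}(A)+\lambda B)$ as functions of $\lambda$, and the passage to equality of formal polynomials via Corollaries~\ref{cor:DegDetABLEQK} and~\ref{DegFGLessEqual}. No gaps; nothing further to add.
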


\begin{proof}By Corollary~\ref{cor:LinPresIso} every such $T$ is invertible. 
Since $B$ satisfies the condition~\eqref{lem:DetNKPresPresLowDeg:eq1}, then in particular
\begin{equation}\label{lem:DetNKPresPresLowDeg:eq3}
\deg_{\lambda}(\det_{n\, k} (T^{-1}(A) + \lambda B)) \le 1
\end{equation}
for all $A \in \M_{n\,k}(\F)$.

Now, for $T(B)$ we have
\begin{multline*}
\det_{n\, k} (A + \lambda T(B)) = \det_{n\, k} \left(T(T^{-1}(A)) + \lambda T(B)\right)\\
= \det_{n\, k} \left(T(T^{-1}(A) + \lambda B)\right) = \det_{n\, k} (T^{-1}(A) + \lambda B)
\end{multline*}
for every fixed $A \in \M_{n\, k}(\F)$ and all $\lambda \in \F$. Thus, since 
$$\deg_\lambda\left(\det_{n\, k} (A + \lambda T(B))\right) \le k\;\;\mbox{and}\deg_\lambda\det_{n\, k} (T^{-1}(A) + \lambda B) \le k$$ by Corollary~\ref{cor:DegDetABLEQK}, then $\det_{n\, k} (A + \lambda T(B)) = \det_{n\, k} (T^{-1}(A) + \lambda B)$ as elements of $\F[\lambda]$ by Corollary~\ref{DegFGLessEqual}. Hence, 
\[
\deg_{\lambda}\left(\det_{n\, k} \left(A + \lambda T(B)\right)\right) = \deg_{\lambda}\left(\det_{n\, k} \left(T^{-1}(A) + \lambda B\right)\right)
\]
for all $A \in \M_{n\,k}(\F)$ by Corollary~\ref{DegFGLessEqual}. Therefore, \[
\deg_{\lambda}\left(\det_{n\, k} \left(A + \lambda T(B)\right)\right) \le 1
\]
by~\eqref{lem:DetNKPresPresLowDeg:eq3}.
\end{proof}

Now we are ready to prove the converse of Lemma~\ref{lem:Rank1CullisDeg1}.

\begin{lemma}\label{lem:CullisDeg1Rank1}Let $|\F| > k \ge 4$, $n \ge k + 2$, $B \in \M_{n\, k}(\F)$. Assume that $$\deg_{\lambda} (\det_{n\, k} (A + \lambda B)) \le 1$$ for all $A \in \M_{n\, k}(\F).$ Then $\rk (B) \le 1.$
\end{lemma}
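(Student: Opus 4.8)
The plan is to reduce everything to Lemma~\ref{lem:AllZeroRank1}. Recall that a matrix has rank at most one exactly when all of its $2\times 2$ minors vanish, equivalently when every pair of its columns is linearly dependent. So fix indices $1\le p<q\le k$; it is enough to show that the $n\times 2$ matrix $X\coloneqq B(|p]\,|\,B(|q]$ has rank at most one. Since $n\ge k+2$ and $k\ge 4$, Lemma~\ref{lem:AllZeroRank1} applies to $X$, so the whole argument reduces to verifying that
\[
\det_{n\,k}\Bigl(B(|p]\,\Big|\,B(|q]\,\Big|\,A'\Bigr)=0\qquad\text{for all }A'\in\M_{n\,(k-2)}(\F).
\]

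To obtain this I would isolate a single monomial from the degree hypothesis. Given $A'$, let $A\in\M_{n\,k}(\F)$ be the matrix whose columns indexed by $p$ and $q$ are zero and whose remaining $k-2$ columns are the columns of $A'$, placed in order in the positions of $[k]\setminus\{p,q\}$. Then in $A+\lambda B$ the $p$-th and $q$-th columns are $\lambda B(|p]$ and $\lambda B(|q]$, so multilinearity of $\det_{n\,k}$ in the columns lets us pull a $\lambda$ out of each, giving $\det_{n\,k}(A+\lambda B)=\lambda^2 Q(\lambda)$ in $\F[\lambda]$, where $Q(0)$ equals, up to the sign of the fixed column permutation that brings columns $p,q$ to the front, the expression $\det_{n\,k}(B(|p]\,|\,B(|q]\,|\,A')$. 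By hypothesis $\det_{n\,k}(A+\lambda B)$ has $\lambda$-degree at most $1$; but a nonzero multiple of $\lambda^2$ in $\F[\lambda]$ has degree at least $2$, so $Q\equiv 0$, hence $Q(0)=0$, which is the claimed vanishing. (This is the ``easy half'' alluded to in the remark following Lemma~\ref{lem:Rank1CullisDeg1}.)

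Applying Lemma~\ref{lem:AllZeroRank1} then yields $\rk\bigl(B(|p]\,|\,B(|q]\bigr)\le 1$ for every pair $p<q$, i.e.\ any two columns of $B$ are proportional; choosing any nonzero column of $B$ (if there is none then $B=O_{n\,k}$ and we are done), every other column lies in its span, so $\rk(B)\le 1$. The substantive content of the converse has already been packed into Lemma~\ref{lem:AllZeroRank1} and the three expansion lemmas feeding it, so I expect no real obstacle here; the only thing to watch is the bookkeeping, namely that the column permutation relating $A+\lambda B$ (with columns $p,q$ zeroed) to the block form $B(|p]\,|\,B(|q]\,|\,A'$ is one fixed permutation independent of $A'$, so that the vanishing transfers for all $A'$ simultaneously.
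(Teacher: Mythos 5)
Your proof is correct and follows essentially the same route as the paper: both reduce to Lemma~\ref{lem:AllZeroRank1} by zeroing out two columns of $A$ and observing that the resulting coefficient of $\lambda^2$ in $\det_{n\,k}(A+\lambda B)$, which must vanish by the degree hypothesis, equals $\pm\det_{n\,k}\bigl(B(|p]\,\big|\,B(|q]\,\big|\,A'\bigr)$. The only difference is that you handle an arbitrary pair $p<q$ directly, whereas the paper first treats the pair $(1,2)$ and then transfers to general pairs via a column permutation together with Lemma~\ref{lem:DetNKPresPresLowDeg}; your version is marginally more economical but not a genuinely different argument.
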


\begin{proof}
The proof is divided into two parts as follows.
\begin{enumerate}[label=(\roman*), ref=(\roman*)]
\item\label{lem:CullisDeg1Rank1:part1}We prove that $\rk(B(|1,2] \le 1$ for all $\M_{n\, k}(\F)$ satisfying the condition of the lemma.
\item\label{lem:CullisDeg1Rank1:part2}We prove that $\rk(B) \le 1$ for all $\M_{n\, k}(\F)$ satisfying the condition of the lemma.
\end{enumerate}

\paragraph{\ref{lem:CullisDeg1Rank1:part1}} Let $B \in \M_{n\, k}(\F)$ be a matrix satisfying the condition of the lemma. We show that $B' = B(|1,2]  = B(|1] \Big| B(|2]$ satisfies the condition of Lemma~\ref{lem:AllZeroRank1}.

For this, suppose that $A \in \M_{n\, (k - 2)}(\F)$. Let  $A' \in \M_{n\, k}(\F)$ be defined by
\[
A' = \begin{pmatrix}O_{n\,1} & O_{n\,1} & A\end{pmatrix}
\]
Consider the expansion~\eqref{eq:CullisBinomialExpansion} for $P = \det_{n\, k} (A' + \lambda B) \in \F[\lambda].$ 
\[
P = \sum_{d = 0}^{k}\lambda^d \sum_{1 \le i_1 < \ldots < i_d \le k} \det_{n\,k}\Bigl(A(|1]\Big|\ldots \Big| B(|i_1] \Big| \ldots \Big| B(|i_d] \Big| \ldots \Big| A(|n] \Bigr)
\]
Since $\deg_\lambda(P) \le 1$ for all $A \in \M_{n\,k}(\F)$ by the condition of the lemma, then the coefficient of $\lambda^k$ in $P$ is zero for all $k \ge 2$. 

In particular, the coefficient of $\lambda^2$ in $P$ is zero. This implies that 
\begin{equation}\label{lem:CullisDeg1Rank1:eq1}
\sum_{1 \le i_1 < i_2 \le k} \det_{n\, k}\Bigl(A'(|1]\Big|\ldots \Big|  B(|i_1] \Big| \ldots \Big| B(|i_d] \Big| \ldots \Big|  A'(|n] \Bigr) = 0.
\end{equation}

Consider the particular summands of the above sum. Let $1 \le i_1 < i_k \le k$ and $C \in \M_{n\,k}(\F)$ be defined by
\[
C = A'(|1]\Big|\ldots \Big|  B(|i_1] \Big| \ldots \Big| B(|i_d] \Big| \ldots \Big|  A'(|n].
\]
If $(i_1,i_2) \neq (1,2)$, then the first or the second row of $C$ contains $A'(|1] = 0$ or $A'(|2] = 0$ and consequently $\det_{n\,k}(C) = 0.$
Therefore, \begin{multline*}
\sum_{1 \le i_1 < i_2 \le k} \det_{n\, k}\Bigl(A'(|1]\Big|\ldots \Big|  B(|i_1] \Big| \ldots \Big| B(|i_d] \Big| \ldots \Big|  A'(|n] \Bigr)\\
=  \det_{n\, k}\Bigl(B(|1] \Big|  B(|2] \Big| A'(|3] \Big| \ldots \Big| A'(|n] \Bigr)
\end{multline*}
By substituting this into~\eqref{lem:CullisDeg1Rank1:eq1} we obtain
\[
\det_{n\, k}\Bigl(B(|1] \Big|  B(|2] \Big| A'(|3] \Big| \ldots \Big| A'(|n] \Bigr) = 0
\]
Since  $A'(|3] \Big| \ldots \Big| A'(|n] = A$ and $B' = B(|1] \Big|  B(|2]$ this implies that
\[
\det_{n\, k}(B'|A ) = 0 
\]

Hence, $B'$ satisfies the condition of Lemma~\ref{lem:AllZeroRank1} and therefore 
\[\rk \Bigl(B(|1,2]\Bigr) = \rk (B') \le 1.\]
\paragraph{\ref{lem:CullisDeg1Rank1:part2}}By contradiction. Suppose that  $\deg_{\lambda} (\det_{n\, k} (A + \lambda B)) \le 1$ for every $A \in \M_{n\, k}(\F)$ and $\rk(B) \ge 2.$ Hence, there exist $1 \le i < j \le k$ such that the $i$-th and the $j$-th column of $B$ are linearly independent and consequently
\begin{equation}\label{lem:CullisDeg1Rank1:eq3}
\rk\Bigl(B(|i,j]\Bigr) = 2.
\end{equation}

Let $\tau_1$ be a transposition exchanging 1 and $i$, $\tau_2$ be a transposition exchanging 2 and $j$. Let $\sigma = \tau_1 \cdot \tau_2$ be a product of these transposition. $\sigma$ is even because it is a product of two transpositions. Since $i < j$, then $\sigma$ sends $i$ to 1 and $j$ to 2.

Let $C_\sigma \in \M_{k\,k}(\F)$ be a permutation matrix corresponding to $\sigma$, $T_\sigma$ be an invertible linear map on $\M_{n\,k}(\F)$ defined by $T_\sigma(X) = XC_\sigma$ and $B' = BC_\sigma = T_\sigma(B)$. 

Observe that, 
\begin{equation}\label{lem:CullisDeg1Rank1:eq2}
B'(|1, 2] = B(|i, j]
\end{equation}
 by the definition of $\sigma$. In addition, $T_\sigma$ preserves $\det_{n\,k}$ by Corollary~\ref{cor:rightmatrixmult} because $\sigma$ is an even permutation being a product of two transpositions and consequently $\det_k(C_\sigma) = 1$.

By applying Lemma~\ref{lem:DetNKPresPresLowDeg} to $T_\sigma$ and $B$ we conclude that
\[
\deg_{\lambda}(\det_{n\,k}(A + \lambda T_\sigma(B))) \le 1
\]
for all $A \in \M_{n\,k}(\F)$. Hence, $B'$ satisfies the condition of the lemma. The established part~\ref{lem:CullisDeg1Rank1:part1} of the lemma implies that $\rk\Bigl(B'(|1,2]\Bigr) \le 1$. Therefore, \[
\rk\Bigl(B(|i, j]\Bigr) \le 1
\]
by~\eqref{lem:CullisDeg1Rank1:eq2}, which contradicts with~\eqref{lem:CullisDeg1Rank1:eq3}.
\end{proof}

The statement of Lemma~\ref{lem:CullisDeg1Rank1} does not hold if we do not assume that $k \ge 4$ as it follows from the example below.

\begin{example}\label{cex:K3NKEven}Let $B = \begin{psmallmatrix}
                                 1 & 0 & 0 & 0 & -1 & 0 & \cdots & 0\\
                                 0 & 1 & 0 & -1 & 0 & 0 & \cdots & 0\\
                                 0 & 0 & 0 & 0 & 0 & 0 & \cdots & 0
                                        \end{psmallmatrix}^t \in \M_{n\,3}(\F).$ Then $\rk (B) = 2$ and $\deg_{\lambda} (\det_{n\,k}(A + \lambda B)) \le 1$ for all $A \in \M_{n\,3}(\F)$.
\end{example}

Now let us recall the following theorem due to Westwick reformulated in order to apply it the particular case of  maps on matrix spaces instead of maps on tensor product spaces.

\begin{lemma}[\cite{westwick1967}, Cf. Theorem 3.5]\label{lem:westwick}Assume that $n > k$ and let $T$ be an onto linear map of $\M_{n\,k}(\F)$ such that $\rk(T(X)) = 1$ for all $X \in \M_{n\,k}(\F)$ with $\rk(X) = 1$. Then there exist invertible matrices $A \in \M_{n\,n}(\F)$ and $B \in \M_{k\,k}(\F)$ such that
  \[
    T(X) = AXB
  \]
  for all $X \in \M_{n\,k}(\F)$.
\end{lemma}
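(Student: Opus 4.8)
The plan is to obtain this statement by specializing Westwick's classification of linear maps preserving decomposable tensors, once the hypothesis has been rephrased in that language. Since $\M_{n\,k}(\F)$ is finite-dimensional and $T$ is surjective, $T$ is automatically bijective; in particular it is injective, so it sends every nonzero matrix to a nonzero matrix. Identify $\M_{n\,k}(\F)$ with $\F^n \otimes \F^k$ through the linear isomorphism $vw^t \mapsto v \otimes w$. Under this identification the rank-one matrices correspond exactly to the nonzero decomposable tensors, and the hypothesis ``$\rk(T(X)) = 1$ whenever $\rk(X) = 1$'' says precisely that $T$ carries nonzero decomposables to nonzero decomposables. (Being a linear bijection that maps the irreducible rank-at-most-one variety into itself, $T$ in fact maps it onto itself, so $T^{-1}$ enjoys the same property; but only the forward direction is needed below.)

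Now I would invoke~\cite[Theorem~3.5]{westwick1967}: a bijective linear self-map of a tensor product $U \otimes W$ that preserves decomposability is either of the form $u \otimes w \mapsto \varphi(u) \otimes \psi(w)$ for some $\varphi \in \mathrm{GL}(U)$ and $\psi \in \mathrm{GL}(W)$, or --- and this second possibility arises only when $\dim U = \dim W$ --- of the swapped form $u \otimes w \mapsto \varphi(w) \otimes \psi(u)$ for invertible $\varphi \colon W \to U$ and $\psi \colon U \to W$. In our case $\dim \F^n = n > k = \dim \F^k$, so the swapped alternative is excluded, and $T$ must correspond to a map $v \otimes w \mapsto (Av) \otimes (Cw)$ with $A \in \M_{n\,n}(\F)$ and $C \in \M_{k\,k}(\F)$ invertible. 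Translating back, $T(vw^t) = (Av)(Cw)^t = A\,(vw^t)\,C^t$ for every rank-one matrix $vw^t$; setting $B = C^t$, which is again an invertible $k \times k$ matrix, and using that the rank-one matrices (for instance the $E_{i\,j}$) span $\M_{n\,k}(\F)$ together with the linearity of $T$ and of $X \mapsto AXB$, we conclude that $T(X) = AXB$ for all $X \in \M_{n\,k}(\F)$.

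The essential mathematical work here is all carried by Westwick's theorem, so there is no serious obstacle; the only points demanding a little care are bookkeeping ones --- matching multiplication side and the covariance of the second tensor factor with the conventions of~\cite{westwick1967}, which at worst replaces $C$ by its transpose or inverse, and verifying that the hypotheses under which Westwick's theorem is stated hold over our ground field, which is unproblematic given $|\F| > k \ge 4$. The one genuinely decisive step, ruling out the transpose-type alternative, is immediate from $n \neq k$.
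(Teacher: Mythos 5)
Your proposal is correct and follows exactly the route the paper intends: the paper states this lemma without proof, as a reformulation of Westwick's Theorem~3.5 for matrix spaces, and your argument simply supplies the routine translation (surjectivity implies bijectivity in finite dimensions, the identification $\M_{n\,k}(\F)\cong\F^n\otimes\F^k$ matching rank-one matrices with nonzero decomposable tensors, and the exclusion of the swapped alternative since $n>k$). Nothing in your write-up conflicts with the paper, and the only speculative aside (that $T$ maps the decomposables onto themselves) is explicitly flagged as not needed.
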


The lemma below shows that every linear map preserving the Cullis' determinant satisfies the conditions of the Westwick's theorem.

\begin{lemma}\label{lem:LinPresRank}Assume that $|\F| > k \ge 4$, $n \ge k + 2$ and $n + k$ is even. Let  $T\colon \M_{n\, k} (\F) \to \M_{n\, k} (\F)$ be a linear map such that  $\det_{n\, k} (T(X)) = \det_{n\,k}(X)$ for all $X \in \M_{n\, k} (\F).$ Then $\rk (X) = 1$ implies $\rk (T(X)) = 1.$
\end{lemma}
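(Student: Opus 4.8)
The goal is to show that if $T$ preserves $\det_{n\,k}$, then $T$ sends rank-one matrices to rank-one matrices. The key observation is that rank one is characterised, via Lemma~\ref{lem:Rank1CullisDeg1} and Lemma~\ref{lem:CullisDeg1Rank1}, by the degree condition
\[
\deg_{\lambda}\bigl(\det_{n\,k}(A + \lambda B)\bigr) \le 1 \quad \text{for all } A \in \M_{n\,k}(\F),
\]
and this degree condition is preserved by $T$ thanks to Lemma~\ref{lem:DetNKPresPresLowDeg}. So the proof is essentially a matter of chaining these three lemmas together, after checking that their hypotheses are all in force under the assumptions $|\F| > k \ge 4$, $n \ge k+2$, $n+k$ even.

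\begin{proof}
Let $X \in \M_{n\,k}(\F)$ be such that $\rk(X) = 1$. Since $\rk(X) = 1 \le 1$, Lemma~\ref{lem:Rank1CullisDeg1} applies and gives
\[
\deg_{\lambda}\bigl(\det_{n\,k}(A + \lambda X)\bigr) \le 1 \quad \text{for all } A \in \M_{n\,k}(\F).
\]
The hypotheses of Lemma~\ref{lem:DetNKPresPresLowDeg} ($n \ge k+2$, $|\F| > k$, $n+k$ even, and $T$ a linear $\det_{n\,k}$-preserver) are satisfied, so applying it with $B = X$ yields
\[
\deg_{\lambda}\bigl(\det_{n\,k}(A + \lambda\, T(X))\bigr) \le 1 \quad \text{for all } A \in \M_{n\,k}(\F).
\]
Now $|\F| > k \ge 4$ and $n \ge k+2$, so Lemma~\ref{lem:CullisDeg1Rank1} applies with $B = T(X)$ and gives $\rk(T(X)) \le 1$.

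It remains to rule out $\rk(T(X)) = 0$, i.e.\ $T(X) = O_{n\,k}$. But by Corollary~\ref{cor:LinPresIso}, $T$ is invertible (here we use again $n \ge k+2$, $|\F| > k$, $n+k$ even), so $T(X) = O_{n\,k}$ would force $X = O_{n\,k}$, contradicting $\rk(X) = 1$. Therefore $\rk(T(X)) = 1$.
\end{proof}

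There is essentially no obstacle here: every ingredient has already been assembled in the preceding sections, and the only subtlety is the bookkeeping of which numerical hypotheses are needed at each invocation — note in particular that Lemma~\ref{lem:CullisDeg1Rank1} is the only place the assumption $k \ge 4$ (as opposed to $k \ge 3$) is genuinely used, which is consistent with the counterexample in Example~\ref{cex:K3NKEven}. The one point worth a sentence of care is the non-degeneracy step ($T(X) \ne 0$), which is handled by the invertibility of $T$ coming from $\rad(\det_{n\,k}) = \{0\}$; without it one would only conclude $\rk(T(X)) \le 1$, which is not enough to feed into Westwick's theorem afterwards.
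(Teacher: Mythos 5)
Your proof is correct and follows exactly the same route as the paper's own argument: chain Lemma~\ref{lem:Rank1CullisDeg1}, Lemma~\ref{lem:DetNKPresPresLowDeg}, and Lemma~\ref{lem:CullisDeg1Rank1} to get $\rk(T(X)) \le 1$, then use the invertibility of $T$ from Corollary~\ref{cor:LinPresIso} to exclude $T(X) = 0$. No gaps; your version is in fact slightly cleaner than the paper's, which contains a couple of typos writing $\rk(X)$ where $\rk(T(X))$ is meant.
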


\begin{proof}By Corollary~\ref{cor:LinPresIso} every such $T$ is invertible. 

Suppose that $\rk (X) = 1$ (this implies that $X \neq 0$). Then $$\deg_{\lambda} (\det_{n\, k} (A + \lambda X)) \le 1$$ for all $A \in \M_{n\, k}(\F)$ by Lemma~\ref{lem:Rank1CullisDeg1}. Therefore, $$\deg_{\lambda} (\det_{n\, k} (A + \lambda T(X))) \le 1$$ for all $A \in \M_{n\, k}(\F)$
by Lemma~\ref{lem:DetNKPresPresLowDeg}. From this we conclude that $\rk (X) \le 1$ by Lemma~\ref{lem:CullisDeg1Rank1}. Since $X \neq 0$ and $T$ is invertible, then $T(X) \neq 0.$ Therefore, $\rk (X) = 1.$
\end{proof}

\begin{theorem}\label{thm:MainTheoremEvenKGe4}Assume that $|\F| > k \ge 4, n \ge k + 2$ and  $n + k$ is even. Let $T\colon \M_{n\, k} (\F) \to \M_{n\, k} (\F)$ be a linear map. Then $\det_{n\, k} (T(X)) = \det_{n\,k}(X)$ for all $X \in \M_{n\, k} (\F)$ if and only if there exist $A \in \M_{n\, n}(\F)$ and $B \in \M_{k\, k}(\F)$ such that
\begin{equation}\label{thm:MainTheoremEvenKGe4:eq}
\det_{n\, k} \Bigl(A(|i_1,\ldots, i_k]\Bigr) \cdot \det_k \Bigl(B\Bigr) = (-1)^{i_1 + \ldots + i_k - 1 - \ldots - k}
\end{equation}
for all increasing sequences $1 \le i_1 < \ldots < i_k \le n $ and
\[
T(X) = AXB
\]
for all $X \in \M_{n\, k} (\F).$
\end{theorem}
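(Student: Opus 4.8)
The plan is to assemble the machinery already in place; the theorem is a synthesis of the preceding lemmas rather than a new argument. Note first that for $d=\{i_1,\ldots,i_k\}\in\binom{[n]}{k}$ the quantity $(-1)^{i_1+\ldots+i_k-1-\ldots-k}$ equals $\sgn_{[n]}(d)$ by the very definition of $\sgn_{[n]}$, since $\sum_{\alpha=1}^{k}(i_\alpha-\alpha)=(i_1+\ldots+i_k)-(1+\ldots+k)$. Hence condition~\eqref{thm:MainTheoremEvenKGe4:eq} is literally condition~\eqref{eq:TwoSideMul2}.

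For the \emph{if} direction, suppose $A\in\M_{n\,n}(\F)$ and $B\in\M_{k\,k}(\F)$ satisfy~\eqref{thm:MainTheoremEvenKGe4:eq} and $T(X)=AXB$. By the remark above this is exactly the hypothesis of Lemma~\ref{lem:TwoSidedMulPreservesDet}, which immediately gives $\det_{n\,k}(T(X))=\det_{n\,k}(X)$ for all $X\in\M_{n\,k}(\F)$.

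For the \emph{only if} direction, suppose $T$ is linear and $\det_{n\,k}(T(X))=\det_{n\,k}(X)$ for all $X$. First I would invoke Corollary~\ref{cor:LinPresIso}, whose hypotheses ($n\ge k+2$, $|\F|>k$, $n+k$ even) all hold, to conclude that $T$ is invertible and in particular onto. Next, Lemma~\ref{lem:LinPresRank} shows $\rk(X)=1$ implies $\rk(T(X))=1$, so $T$ is an onto linear map on $\M_{n\,k}(\F)$ preserving the set of rank-one matrices. Since $n>k$, Westwick's theorem in the form of Lemma~\ref{lem:westwick} then produces invertible matrices $A\in\M_{n\,n}(\F)$ and $B\in\M_{k\,k}(\F)$ with $T(X)=AXB$ for all $X\in\M_{n\,k}(\F)$. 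Finally, because this $T$ preserves $\det_{n\,k}$, the converse half of Lemma~\ref{lem:TwoSidedMulPreservesDet} yields $\det_{n\,k}\bigl(A(|d]\bigr)\cdot\det_k(B)=\sgn_{[n]}(d)$ for every $d\in\binom{[n]}{k}$, which rewrites as~\eqref{thm:MainTheoremEvenKGe4:eq}. This closes the argument.

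The only mild bookkeeping point is matching the sign conventions at the two places where Lemma~\ref{lem:TwoSidedMulPreservesDet} is used, and confirming that Westwick's result applies in the rectangular (non-square) setting, which is why Lemma~\ref{lem:westwick} was stated in the matrix-space formulation. There is no genuine obstacle at this stage: all the real difficulty is upstream, in Lemma~\ref{lem:CullisDeg1Rank1} (the characterisation of $\rk(B)\le 1$ via $\deg_\lambda\det_{n\,k}(A+\lambda B)\le 1$) and in Lemma~\ref{lem:NKEvenAdditionPreserverIsZero} (vanishing of $\rad(\det_{n\,k})$ when $n+k$ is even), which are precisely what make Westwick's theorem applicable; the present theorem merely chains these together.
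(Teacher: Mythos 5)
Your proposal is correct and follows essentially the same route as the paper: sufficiency via Lemma~\ref{lem:TwoSidedMulPreservesDet}, and necessity via Lemma~\ref{lem:LinPresRank} combined with Westwick's theorem (Lemma~\ref{lem:westwick}) and the converse half of Lemma~\ref{lem:TwoSidedMulPreservesDet}. Your explicit invocation of Corollary~\ref{cor:LinPresIso} to secure surjectivity before applying Westwick is a point the paper leaves implicit (it is buried in the proof of Lemma~\ref{lem:LinPresRank}), but this is a matter of presentation rather than substance.
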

\begin{proof}The sufficiency follows from Lemma~\ref{lem:TwoSidedMulPreservesDet}. Let us prove the necessity.

It follows from Lemma~\ref{lem:LinPresRank} that $\rk (T(X)) = 1$ for all $X$ with $\rk (X) = 1.$ Therefore, by Lemma~\ref{lem:westwick} there exist invertible matrices $A \in \M_{n\,n}(\F)$ and $B \in \M_{k\,k}(\F)$ such that
  \[
    T(X) = AXB
  \]
  for all $X \in \M_{n\,k}(\F)$. These $A$ and $B$ satisfy the condition~\eqref{thm:MainTheoremEvenKGe4:eq} by Lemma~\ref{lem:TwoSidedMulPreservesDet} because $T$ preserves $\det_{n\,k}$.
\end{proof}

\section{Linear maps preserving $\det_{n\,1}$}\label{sec:K1}
 Note that if $k = 1$, then the Cullis' determinant is just a linear function and consequently the description of $\det_{n\,1}$ is trivial as it is shown in the following theorem.

\begin{theorem}\label{thm:MainTheoremK1}Assume that $k = 1$. Let $T\colon \M_{n\, 1} (\F) \to \M_{n\, 1} (\F)$ be a linear map. Then $\det_{n\, 1} (T(X)) = \det_{n\,1}(X)$ for all $X \in \M_{n\, k} (\F)$ if and only if there exists $A \in \M_{n\, n}(\F)$ such that
\begin{equation}\label{thm:MainTheoremK1:eq}
\det_{n\, 1} \Bigl(A(|i]\Bigr) = (-1)^{i - 1}
\end{equation}
for all $1 \le i \le n $ and $T(X) = AX$ for all $X \in \M_{n\, 1} (\F).$
\end{theorem}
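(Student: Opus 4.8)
The plan is to reduce everything to the observation that for $k = 1$ the Cullis' determinant is simply the linear functional
\[
\det_{n\,1}(X) = \sum_{i=1}^{n} (-1)^{i-1} x_{i\,1}
\]
on $\M_{n\,1}(\F)$. First I would record this identity: by Corollary~\ref{cor:CullisAsSumDet} (or directly from Definition~\ref{def:CullisDet}), $\binom{[n]}{1}$ consists of the singletons $\{i\}$ with $\sgn_{[n]}(\{i\}) = (-1)^{i-1}$ and $\det_1\bigl(X[\{i\}|)\bigr) = x_{i\,1}$, so the formula follows at once. Next I would use the elementary fact that every linear map $T \colon \M_{n\,1}(\F) \to \M_{n\,1}(\F)$ has the form $T(X) = AX$, where $A \in \M_{n\,n}(\F)$ is the matrix whose $i$-th column is $T(E_{i\,1})$; equivalently $A(|i] = T(E_{i\,1})$ for $1 \le i \le n$.

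Then I would compute $\det_{n\,1}(T(X))$ for such a $T$. Writing $X = \sum_{i=1}^{n} x_{i\,1}\, E_{i\,1}$ gives $T(X) = AX = \sum_{i=1}^{n} x_{i\,1}\, A(|i]$, and since $\det_{n\,1}$ is a linear function of its single column (property of $\det_{n\,k}$ listed in Section~\ref{sec:prelim}) we obtain
\[
\det_{n\,1}(T(X)) = \sum_{i=1}^{n} x_{i\,1}\, \det_{n\,1}\bigl(A(|i]\bigr).
\]
Comparing this with $\det_{n\,1}(X) = \sum_{i=1}^{n}(-1)^{i-1} x_{i\,1}$, the map $T$ preserves $\det_{n\,1}$ if and only if $\sum_{i=1}^{n} x_{i\,1}\det_{n\,1}\bigl(A(|i]\bigr) = \sum_{i=1}^{n}(-1)^{i-1} x_{i\,1}$ for all $X \in \M_{n\,1}(\F)$. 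Evaluating both sides at $X = E_{i\,1}$ forces $\det_{n\,1}\bigl(A(|i]\bigr) = (-1)^{i-1}$ for each $i$, which is exactly~\eqref{thm:MainTheoremK1:eq}; conversely, if~\eqref{thm:MainTheoremK1:eq} holds the two functionals coincide coefficient by coefficient, so $T$ preserves $\det_{n\,1}$.

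There is essentially no obstacle here: the case $k = 1$ is genuinely trivial precisely because $\det_{n\,1}$ is a nonzero \emph{linear} functional rather than a polynomial of higher degree. In particular, testing two linear functions on the standard basis vectors $E_{1\,1}, \ldots, E_{n\,1}$ already forces equality of their coefficients, so no assumption on $|\F|$ is needed and Corollary~\ref{DegFGLessEqual} is not invoked; this is why the statement holds over an arbitrary field. The only substantive content is the explicit identification of the coefficients $(-1)^{i-1}$, which is immediate from the sign convention of Definition~\ref{def:CullisDet}.
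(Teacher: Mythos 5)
Your proof is correct and follows essentially the same route as the paper: both arguments reduce to the observation that every linear map on $\M_{n\,1}(\F)=\F^{n}$ is of the form $X\mapsto AX$ with $A(|i]=T(E_{i\,1})$, after which the condition~\eqref{thm:MainTheoremK1:eq} is read off. The only difference is that the paper cites Lemma~\ref{lem:TwoSidedMulPreservesDet} for this last step, whereas you reprove its $k=1$ instance by the direct coefficient comparison $\det_{n\,1}(AX)=\sum_{i}x_{i\,1}\det_{n\,1}\bigl(A(|i]\bigr)$ versus $\sum_{i}(-1)^{i-1}x_{i\,1}$ on the standard basis, which correctly makes explicit that no assumption on $|\F|$ is needed.
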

\begin{proof}Indeed, every linear map $T\colon \M_{n\, 1} (\F) \to \M_{n\, 1} (\F)$ could be considered as a matrix multiplication for appropriate $A$ because $\M_{n\,1}(\F) = \F^{n}$. Thus, there exists $A \in \M_{n\,n}(\F)$ such that $T(X) = AX$ for all $X \in \M_{n\, 1} (\F).$ Since this map preserves $\det_{n\,1}$, then $A$ satisfies the condition~\eqref{thm:MainTheoremK1:eq} by Lemma~\ref{lem:TwoSidedMulPreservesDet}.

Lemma~\ref{lem:TwoSidedMulPreservesDet} also implies that every map $X \mapsto AX$ with $A$ satisfying the condition~\eqref{thm:MainTheoremK1:eq} preserves $\det_{n\,1}$.
\end{proof}

\section{Linear maps preserving $\det_{n\,2}$}\label{sec:K2}
In this section we provide an example showing that a linear map preserving $\det_{n\,2}$ may not correspond the description of linear maps preserving $\det_{n\,k}$ for $k \ge 3$ obtained in this paper in its sequel. Note that this example is obtained by decomposing the quadratic space $\left(\M_{n\,2}(\F), \det_{n\,2}\right)$ into an appropriate direct sum of \emph{hyperbolic planes} and interchanging two elements of the corresponding basis. 

\begin{lemma}\label{lem:DetN2PresNonMatrix}Assume that $n \ge 2$. Let $$X = \begin{pmatrix}x_{1\,1} & x_{1\,2}\\\vdots & \vdots\\ x_{n\,1} & x_{n\,2}\end{pmatrix} \in \M_{n\,2}(\F), \quad S_1 = \sum^{n-1}_{i=2} (-1)^{i} x_{i\,1},\quad S_2 = \sum^{n-1}_{i=2} (-1)^{i} x_{i\,2}.$$ 
Let $Y \in \M_{n\,2}(\F)$ be defined by
$$Y = \begin{pmatrix}S_1 + S_2 + x_{n\,2} & x_{1\,2}\\ x_{2\,1} & x_{2\,2} \\ \vdots & \vdots\\  x_{(n-1)\,1} & x_{(n-1)\,2}\\ x_{n\,1} & -S_1 - S_2 + x_{1\,1}  \end{pmatrix}.$$
Then $$\det_{n\,2}(X) = \det_{n\,2}(Y).$$
\end{lemma}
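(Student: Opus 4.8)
The plan is to compute both Cullis determinants explicitly using the expansion of $\det_{n\,2}$ as an alternating sum of $2\times 2$ minors (Corollary~\ref{cor:CullisAsSumDet}) and compare the results termwise. By Corollary~\ref{cor:CullisAsSumDet},
\[
\det_{n\,2}(X) = \sum_{1 \le p < q \le n} (-1)^{p + q - 1 - 2}\bigl(x_{p\,1}x_{q\,2} - x_{q\,1}x_{p\,2}\bigr),
\]
and similarly for $Y$. Since $Y$ differs from $X$ only in the $(1,1)$ entry (replaced by $S_1 + S_2 + x_{n\,2}$) and the $(n,2)$ entry (replaced by $-S_1 - S_2 + x_{1\,1}$), the difference $\det_{n\,2}(Y) - \det_{n\,2}(X)$ collects only those minors that involve row $1$ in its first column or row $n$ in its second column. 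First I would isolate those two families of terms: writing $\Delta = S_1 + S_2$, the contribution of the modified $(1,1)$ entry is $\Delta + x_{n\,2}$ in place of $x_{1\,1}$, appearing in the minors $(-1)^{q-1}(x_{1\,1}x_{q\,2} - x_{q\,1}x_{1\,2})$ for $q = 2, \ldots, n$; the contribution of the modified $(n,2)$ entry is $-\Delta + x_{1\,1}$ in place of $x_{n\,2}$, appearing in the minors $(-1)^{p-1}(x_{p\,1}x_{n\,2} - x_{n\,1}x_{p\,2})$ for $p = 1, \ldots, n-1$.

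Next I would expand $\det_{n\,2}(Y) - \det_{n\,2}(X)$ as a sum of four groups, one linear in $\Delta$ from each modified entry, and one group coming from the "extra" $x_{n\,2}$ resp. $x_{1\,1}$ that got swapped into the corners. The two $\Delta$-linear groups are $\Delta \sum_{q=2}^{n} (-1)^{q-1}x_{q\,2}$ and $-\Delta \sum_{p=1}^{n-1}(-1)^{p-1}x_{p\,1}$; using $\Delta = S_1 + S_2$ with $S_1 = \sum_{i=2}^{n-1}(-1)^i x_{i\,1}$ and $S_2 = \sum_{i=2}^{n-1}(-1)^i x_{i\,2}$, I expect these to contribute a mix of "interior" sums together with boundary terms at indices $1$ and $n$; the interior parts should combine so that the messy products among the middle rows cancel, leaving a controlled expression. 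The remaining group, coming from substituting $x_{n\,2}$ for $x_{1\,1}$ in the row-$1$ minors and $x_{1\,1}$ for $x_{n\,2}$ in the row-$n$ minors, should produce terms that, after reindexing, match the boundary terms from the $\Delta$-linear part with opposite sign. The key bookkeeping identity is that $(-1)^{q-1}x_{q\,2}$ summed over $q = 2, \ldots, n-1$ is exactly $-S_2$ (up to the overall sign convention) — i.e. the interior index ranges in the minor sums coincide with the ranges defining $S_1, S_2$ — so self-consistent cancellation is plausible.

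The main obstacle will be the sign bookkeeping: tracking the $(-1)$-powers from $\sgn_{[n]}(c)$ (which contributes $(-1)^{p+q-3}$ for $c = \{p,q\}$), from the definitions of $S_1, S_2$ (which carry $(-1)^i$), and from the corner swaps, and verifying that everything telescopes to zero. I would organize this by fixing, once and for all, that the minor on rows $\{p,q\}$ with $p<q$ enters $\det_{n\,2}$ with coefficient $(-1)^{p+q+1}$, then splitting each of the four groups into "both indices interior," "one index equal to $1$," "one index equal to $n$," and "indices $\{1,n\}$," and checking the four resulting partial sums pair off. A cleaner alternative I would consider, if the brute-force cancellation gets unwieldy, is to recognize $Y$ as the image of $X$ under the linear map that, in the hyperbolic-plane decomposition of $(\M_{n\,2}(\F), \det_{n\,2})$ alluded to after the section heading, swaps the two basis vectors spanning one hyperbolic plane (an isometry, hence determinant-preserving by definition); one then only needs to verify that the explicit formula for $Y$ agrees with that swap, which reduces the problem to a change-of-basis computation rather than a minor-by-minor identity. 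Either way, no deep idea is needed — only careful accounting — so I expect the proof to be a direct verification.
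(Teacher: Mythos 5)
Your plan — expand both sides via Corollary~\ref{cor:CullisAsSumDet}, observe that the minors on rows $\{p,q\}$ with $2\le p<q\le n-1$ coincide, and verify that the remaining boundary minors (those involving row $1$ or row $n$) cancel after collecting the $S_1+S_2$ contributions — is exactly the paper's proof, which carries out this bookkeeping via multilinearity and antisymmetry of the $2\times2$ determinant. The cancellation you anticipate does go through, so the approach is correct and essentially identical to the paper's.
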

\begin{proof}

Let us express $\det_{n\,2}(X)$ and $\det_{n\,2}(Y)$ as sums of $2\times 2$-determinants.
\begin{multline}\label{lem:DetN2PresNonMatrix:eq1}
\det_{n\,2}(X) = \sum_{1 \le i < j \le n} (-1)^{i + j - 1 -2}\begin{vmatrix}x_{i\,1} & x_{i\,2}\\ x_{j\,1} & x_{j\,2}\end{vmatrix}\\
= \sum_{2 \le i < j \le n-1} (-1)^{i + j - 1 - 2}\begin{vmatrix}x_{i\,1} & x_{i\,2}\\ x_{j\,1} & x_{j\,2}\end{vmatrix}
 + \sum_{2 \le i \le n-1} (-1)^{i-2}\begin{vmatrix}x_{1\,1} & x_{1\,2}\\ x_{i\,1} & x_{i\,2}\end{vmatrix}\\
  + \left(\sum_{2 \le i \le n-1} (-1)^{i-1} \begin{vmatrix}x_{i\,1} & x_{i\,2}\\ x_{n\,1} & x_{n\,2}\end{vmatrix}\right) + \begin{vmatrix}x_{1\,1} & x_{1\,2}\\ x_{n\,1} & x_{n\,2}\end{vmatrix}
\end{multline}
and
\begin{multline}\label{lem:DetN2PresNonMatrix:eq2}
\det_{n\,2}(Y) = \sum_{1 \le i < j \le n} (-1)^{i + j - 1 - 2}\begin{vmatrix}Y_{i\,1} & Y_{i\,2}\\ Y_{j\,1} & Y_{j\,2}\end{vmatrix}\\
=\sum_{2 \le i < j \le n-1} (-1)^{i + j - 1 - 2}\begin{vmatrix}x_{i\,1} & x_{i\,2}\\ x_{j\,1} & x_{j\,2}\end{vmatrix}\phantom{XXXXXX}\\
\phantom{X} + \sum_{2 \le i \le n-1} (-1)^{i-2} \begin{vmatrix}S_1 + S_2 + x_{n\,2} & x_{1\,2} \\ x_{i\,1} & x_{i\,2}\end{vmatrix}\\
\phantom{XXXXX}  + \sum_{2 \le i \le n-1} (-1)^{i-1} \begin{vmatrix}x_{i\,1} & x_{i\,2}\\ x_{n\,1} & -S_1 - S_2 + x_{1\,1}\end{vmatrix}\\
   + \begin{vmatrix}S_1 + S_2 + x_{n\,2} & x_{1\,2} \\ x_{n\,1} & -S_1 - S_2 + x_{1\,1}\end{vmatrix}.
\end{multline}
Since the first summands in~\eqref{lem:DetN2PresNonMatrix:eq1} and~\eqref{lem:DetN2PresNonMatrix:eq2} are equal, then we need only to show that
\begin{multline}\label{lem:DetN2PresNonMatrix:eq3}
\sum_{2 \le i \le n-1} (-1)^{i-2}\begin{vmatrix}x_{1\,1} & x_{1\,2}\\ x_{i\,1} & x_{i\,2}\end{vmatrix}
  + \sum_{2 \le i \le n-1} (-1)^{i-1} \begin{vmatrix}x_{i\,1} & x_{i\,2}\\ x_{n\,1} & x_{n\,2}\end{vmatrix}
  + \begin{vmatrix}x_{1\,1} & x_{1\,2}\\ x_{n\,1} & x_{n\,2}\end{vmatrix}\\
  = \sum_{2 \le i \le n-1} (-1)^{i-2} \begin{vmatrix}S_1 + S_2 + x_{n\,2} & x_{1\,2} \\ x_{i\,1} & x_{i\,2}\end{vmatrix}\\
 \phantom{XXXXXX} + \sum_{2 \le i \le n-1} (-1)^{i-1} \begin{vmatrix}x_{i\,1} & x_{i\,2}\\ x_{n\,1} & -S_1 - S_2 + x_{1\,1}\end{vmatrix}\\
   + \begin{vmatrix}S_1 + S_2 + x_{n\,2} & x_{1\,2} \\ x_{n\,1} & -S_1 - S_2 + x_{1\,1}\end{vmatrix}
\end{multline}

Let us simplify the first summand of the right-hand side of~\eqref{lem:DetN2PresNonMatrix:eq3} using the multilinear and antisymmetric properties of $2\times 2$-determinant. First,
\[
\sum_{i = 2}^{n-1} (-1)^{i-2} \begin{vmatrix}S_1 + S_2 + x_{n\,2} & x_{1\,2} \\ x_{i\,1} & x_{i\,2}\end{vmatrix}
 = \begin{vmatrix}S_1 + S_2 + x_{n\,2} & x_{1\,2} \\ S_1 & S_2\end{vmatrix}\]
 by the multilinearity of $2\times 2$-determinant along the last row. Second,
 \[
 \begin{vmatrix}S_1 + S_2 + x_{n\,2} & x_{1\,2} \\ S_1 & S_2\end{vmatrix}
  = \begin{vmatrix}S_2 + x_{n\,2} & x_{1\,2} - S_2 \\ S_1 & S_2\end{vmatrix}
  \]
 by the antisymmetricity of $2\times 2$-determinant along the rows. Third,
\begin{equation}\label{lem:DetN2PresNonMatrix:eqq1}
 \begin{vmatrix}S_2 + x_{n\,2} & x_{1\,2} - S_2 \\ S_1 & S_2\end{vmatrix}
= \begin{vmatrix}S_2 + x_{n\,2} & x_{1\,2} + x_{n\,2} \\ S_1 & S_2 + S_1\end{vmatrix}
\end{equation}
 by the antisymmetricity of $2\times 2$-determinant along the columns.
 
The second summand of the right-hand side of~\eqref{lem:DetN2PresNonMatrix:eq3} could be simplified in the similar way as it done below.
\begin{multline}\label{lem:DetN2PresNonMatrix:eqq2}
\sum_{i = 2}^{n-1} (-1)^{i-1} \begin{vmatrix}x_{i\,1} & x_{i\,2}\\ x_{n\,1} & -S_1 -S_2 + x_{1\,1}\end{vmatrix}
 = -\begin{vmatrix}S_1 & S_2\\ x_{n\,1} & -S_1 -S_2 + x_{1\,1}\end{vmatrix}\\
  = -\begin{vmatrix}S_1 & S_2\\ x_{n\,1} + S_1& -S_1 + x_{1\,1}\end{vmatrix}
 = -\begin{vmatrix}S_1 & S_1 + S_2\\ x_{n\,1} + S_1 & x_{n\,1} + x_{1\,1} \end{vmatrix}.
\end{multline}

Now let us find the sum of the right-hand sides of~\eqref{lem:DetN2PresNonMatrix:eqq1} and~\eqref{lem:DetN2PresNonMatrix:eqq2} (simplified versions of the first and the second summand  of the right-hand side of~\eqref{lem:DetN2PresNonMatrix:eq3}) using the multilinear and antisymmetric properties of $2\times 2$-determinant
\begin{multline}\label{lem:DetN2PresNonMatrix:eqq3}
\begin{vmatrix}S_2 + x_{n\,2} & x_{1\,2} + x_{n\,2} \\ S_1 & S_2 + S_1\end{vmatrix} - \begin{vmatrix}S_1 & S_1 + S_2\\ x_{n\,1} + S_1 & x_{n\,1} + x_{1\,1} \end{vmatrix}\\
= \begin{vmatrix}S_2 + x_{n\,2} & x_{1\,2} + x_{n\,2} \\ S_1 & S_2 + S_1\end{vmatrix} + \begin{vmatrix}x_{n\,1} + S_1 & x_{n\,1} + x_{1\,1}\\ S_1 & S_1 + S_2 \end{vmatrix}\\
= \begin{vmatrix}S_1 + S_2 + x_{n\,1} + x_{n\,2} & x_{1\,1} + x_{1\,2} + x_{n\,1} + x_{n\,2}\\S_1 & S_2 + S_1\end{vmatrix}\\
 = \begin{vmatrix}S_1 + S_2 + x_{n\,1} + x_{n\,2} & x_{1\,1} + x_{1\,2} - S_1  - S_2\\S_1 & S_2\end{vmatrix}.
\end{multline}

Let us add to the right-hand side of~\eqref{lem:DetN2PresNonMatrix:eqq3} the third summand of the right-hand side of~\eqref{lem:DetN2PresNonMatrix:eq3} and simplify it.
{\savebox\strutbox{\rule[-.3\baselineskip]{0pt}{13.25pt}}
\begin{multline*}
\begin{vmatrix}S_1 + S_2 + x_{n\,1} + x_{n\,2} & x_{1\,1} + x_{1\,2} - S_1  - S_2\\S_1 & S_2\end{vmatrix}\\
 + \begin{vmatrix}S_1 + S_2 + x_{n\,2} & x_{1\,2} \\ x_{n\,1} & -S_1 -S_2 + x_{1\,1}\end{vmatrix}\\
= (S_1 + S_2 + x_{n\,1} + x_{n\,2})S_2 - (x_{1\,1} + x_{1\,2} - S_1  - S_2)S_1\phantom{XXXXXX}\\
+ (S_1 + S_2 + x_{n\,2})(-S_1 -S_2 + x_{1\,1})
 - x_{1\,2}x_{n\,1}\\
= (S_1 + S_2)^2 + (x_{n\,1} + x_{n\,2})S_2 - (x_{1\,1} + x_{1\,2})S_1 - (S_1+S_2)^2\\
  + (S_1+S_2)x_{1\,1} - (S_1+S_2)x_{n\,2}  + x_{n\,2}x_{1\,1} - x_{1\,2} x_{n\,1}.
\end{multline*}
The resulting expression could be transformed as follows
\begin{multline*}
(S_1 + S_2)^2 + (x_{n\,1} + x_{n\,2})S_2 - (x_{1\,1} + x_{1\,2})S_1
 - (S_1+S_2)^2 \\
 + (S_1+S_2)x_{1\,1} - (S_1+S_2)x_{n\,2} + x_{n\,2}x_{1\,1} - x_{1\,2} x_{n\,1}\\
   = (x_{n\,1} + x_{n\,2})S_2 - (x_{1\,1} + x_{1\,2})S_1 + (S_1+S_2)x_{1\,1}
  - (S_1+S_2)x_{n\,2} + \begin{vmatrix}x_{1\,1} & x_{1\,2}\\ x_{n\,1}&  x_{n\,2}\end{vmatrix}\\
 = x_{1\,1} S_2 - x_{1\,2} S_1 + x_{n\,1} S_2 - x_{n\,2}S_1
  + \begin{vmatrix}x_{1\,1} & x_{1\,2}\\ x_{n\,1} & x_{n\,2}\end{vmatrix}\phantom{XXXXXXXXXXX}\\
  = \begin{vmatrix}x_{1\,1} & x_{1\,2}\\ S_1 & S_2\end{vmatrix} - \begin{vmatrix} S_1 & S_2\\x_{n\,1} & x_{n\,2}\end{vmatrix}
  + \begin{vmatrix}x_{1\,1} & x_{1\,2}\\ x_{n\,1} & x_{n\,2}\end{vmatrix}\phantom{XXXXXXXXXXXX}\\
\phantom{XX} = \sum\limits_{i = 2}^{n-1} (-1)^{i - 2}\begin{vmatrix}x_{1\,1} & x_{1\,2}\\ x_{i\,1} & x_{i\,2}\end{vmatrix}
  + \sum\limits_{i = 2}^{n-1} (-1)^{i - 1}\begin{vmatrix}x_{i\,1} & x_{i\,2}\\ x_{n\,1} & x_{n\,2}\end{vmatrix}
   + \begin{vmatrix}x_{1\,1} & x_{1\,2}\\ x_{n\,1} & x_{n\,2}\end{vmatrix}.
\end{multline*}}
This is equal to the left-hand side of~\eqref{lem:DetN2PresNonMatrix:eq3}. Therefore, the equality~\eqref{lem:DetN2PresNonMatrix:eq3} holds and consequently
$$\det_{n\,2}(X) = \det_{n\,2}(Y).$$
\end{proof}

\begin{lemma}\label{lem:CounterEXK2NEven}Assume that $n \ge 4$. Then a linear map $\mathbf{T}' \colon \M_{n\,2}(\F) \to \M_{n\,2}(\F)$ defined by
\[
\mathbf{T}'(X) = \begin{pmatrix}\sum\limits^{n-1}_{i=2} (-1)^{i} x_{i\,1} + \sum\limits^{n}_{i=2} (-1)^{i} x_{i\,2} & x_{1\,2}\\ x_{2\,1} & x_{2\,2} \\ \vdots & \vdots\\  x_{(n-1)\,1} & x_{(n-1)\,2}\\ x_{n\,1} & \sum\limits^{n-1}_{i=2} (-1)^{i-1} x_{i\,2} + \sum\limits^{n-1}_{i=1} (-1)^{i-1} x_{i\,1} \end{pmatrix}
\]
is a linear map preserving $\det_{n\,2}$ which does not have the form $X \mapsto AXB + \phi(X)$ for any $A \in \M_{n\,n}(\F),$ $B \in \M_{2\,2}(\F)$ and any linear map $\phi \colon \M_{n\,2}(\F) \to \{\begin{psmallmatrix}a_1 & a_2\\\vdots & \vdots\\ a_1 & a_2\end{psmallmatrix}\mid a_1, a_2 \in \F\}.$
\end{lemma}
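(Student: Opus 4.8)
The plan is to verify the two assertions separately: that $\mathbf{T}'$ preserves $\det_{n\,2}$, and that it cannot be written as $X \mapsto AXB + \phi(X)$.

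For the first assertion I would reduce directly to Lemma~\ref{lem:DetN2PresNonMatrix}. Keeping the notation $S_1 = \sum_{i=2}^{n-1}(-1)^{i} x_{i\,1}$, $S_2 = \sum_{i=2}^{n-1}(-1)^{i} x_{i\,2}$ of that lemma, a routine rewriting of the alternating sums in the definition of $\mathbf{T}'$ shows that the $(1,1)$-entry of $\mathbf{T}'(X)$ equals $S_1 + S_2 + x_{n\,2}$, its $(n,2)$-entry equals $x_{1\,1} - S_1 - S_2$, and all its other entries coincide with those of $X$. Hence $\mathbf{T}'(X)$ is precisely the matrix $Y$ associated with $X$ in Lemma~\ref{lem:DetN2PresNonMatrix}, so $\det_{n\,2}(\mathbf{T}'(X)) = \det_{n\,2}(Y) = \det_{n\,2}(X)$; linearity of $\mathbf{T}'$ is evident from its formula. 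Thus this part is mere bookkeeping built on the previous lemma.

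For the second assertion I would argue by contradiction. Suppose $\mathbf{T}'(X) = AXB + \phi(X)$ for all $X \in \M_{n\,2}(\F)$, with $A \in \M_{n\,n}(\F)$, $B \in \M_{2\,2}(\F)$, and $\phi$ linear with image in the space of $n\times 2$ matrices each of whose columns is a scalar multiple of $\mathbf{1} := (1,\ldots,1)^{t}\in\F^{n}$. Write $e_1,\ldots,e_n$ for the standard basis of $\F^{n}$. The key step is to evaluate $\mathbf{T}'$ on the matrix units $E_{i\,1}$ with $2\le i\le n-1$; at least two such indices exist because $n\ge 4$. Direct substitution in the defining formula gives $\mathbf{T}'(E_{i\,1}) = (-1)^{i}E_{1\,1} + E_{i\,1} + (-1)^{i-1}E_{n\,2}$, whose first column is $(-1)^{i}e_1 + e_i$ and whose second column is $(-1)^{i-1}e_n$; these two vectors are linearly independent, since their supports $\{1,i\}$ and $\{n\}$ are disjoint (recall $i\neq 1$ and $i\neq n$). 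On the other hand, each column of $AE_{i\,1}B$ is a scalar multiple of the $i$-th column $A(|i]$ of $A$, and each column of $\phi(E_{i\,1})$ is a scalar multiple of $\mathbf{1}$; hence both columns of $\mathbf{T}'(E_{i\,1})$ lie in $W_i := \Span\{A(|i],\,\mathbf{1}\}$.

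Since $W_i$ is spanned by two vectors and contains the two independent vectors just exhibited, we get $W_i = \Span\{(-1)^{i}e_1 + e_i,\; e_n\}$; in particular $\mathbf{1}\in\Span\{(-1)^{i}e_1 + e_i,\; e_n\}$. This is impossible: every vector of that span vanishes at all coordinates outside $\{1,i,n\}$, whereas $n\ge 4$ guarantees the existence of such a coordinate (the set $\{1,\ldots,n\}\setminus\{1,i,n\}$ has $n-3\ge 1$ elements), at which $\mathbf{1}$ equals $1$. This contradiction shows that $\mathbf{T}'$ admits no representation $X\mapsto AXB+\phi(X)$, completing the proof. I do not anticipate a serious obstacle: the first part is the identification of $\mathbf{T}'(X)$ with the matrix from Lemma~\ref{lem:DetN2PresNonMatrix}, and the second is a brief linear-algebra argument; the only steps needing care are the simplification of the index-sums defining $\mathbf{T}'(X)$ and the exact evaluation of $\mathbf{T}'(E_{i\,1})$, both of which are routine.
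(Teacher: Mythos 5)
Your proof is correct, and while the first half coincides with the paper's (both reduce the preservation claim to the identification of $\mathbf{T}'(X)$ with the matrix $Y$ of Lemma~\ref{lem:DetN2PresNonMatrix}), your argument for non-representability takes a genuinely different route. The paper argues via the rank inequality $\rk(AXB)\le\rk(X)$: it sets $X'=E_{2\,1}$, writes $\mathbf{T}'(X')-\phi(X')=E_{1\,1}+E_{2\,1}-E_{n\,2}-\begin{psmallmatrix}a_1 & a_2\\ \vdots & \vdots\\ a_1 & a_2\end{psmallmatrix}$, and shows by an explicit alternating sum of three $2\times 2$ minors that this matrix has rank $2$, contradicting $\rk(X')=1$. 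You instead observe that both columns of $\mathbf{T}'(E_{i\,1})=(-1)^iE_{1\,1}+E_{i\,1}+(-1)^{i-1}E_{n\,2}$ would have to lie in the at most two-dimensional space $\Span\{A(|i],\mathbf{1}\}$, force that space to equal $\Span\{(-1)^ie_1+e_i,\,e_n\}$, and derive the contradiction from $\mathbf{1}$ not being supported on $\{1,i,n\}$ when $n\ge 4$. Your version avoids the determinant computation entirely and makes the role of the hypothesis $n\ge 4$ more transparent; the paper's version is more computational but needs only the single test matrix $E_{2\,1}$ and the standard rank bound. One shared caveat: the identification $\mathbf{T}'(X)=Y$ requires $(-1)^n x_{n\,2}=x_{n\,2}$ in the $(1,1)$-entry, i.e.\ $n$ even, which is the intended setting of this section ($k=2$, $n+k$ even) but is implicit in both your write-up and the paper's.
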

\begin{proof}Let us show that $\mathbf{T}'$ preserves $\det_{n\,2}$. Indeed, for a given matrix $X \in \M_{n\,2}(\F)$ the matrix $\mathbf{T}'(X)$ is equal to $Y$ defined in Lemma~\ref{lem:DetN2PresNonMatrix} and consequently $\det_{n\,2}(\mathbf{T}'(X)) = \det_{n\,2}(X)$.

Let us prove by contradiction that $\mathbf{T}'$ does not have the form $X \mapsto AXB + \phi(X)$ for any $A \in \M_{n\,n}(\F),$ $B \in \M_{2\,2}(\F)$ and $\phi$ described in the statement of the lemma. Suppose that $\mathbf{T}'(X) = AXB + \phi(X)$ for some $A \in \M_{n\,n}(\F),$ $B \in \M_{2\,2}(\F)$ and all $X \in \M_{n\,2}(\F)$. Then the properties of rank of the product of matrices imply that
\begin{equation}\label{lem:CounterEXK2NEven:eq1}
\rk(\mathbf{T}'(X) - \phi(X)) = \rk(AXB) \le \rk(X) \quad \mbox{for all}\;\;X \in \M_{n\,2}(\F).
\end{equation}

Let $X' = E_{2\,1}$ and $Y = \mathbf{T}'(X') - \phi(X')$. Then 
$$Y = E_{1\,1} + E_{2\,1} - E_{n\,2} - \begin{pmatrix}a_1 & a_2\\\vdots & \vdots\\ a_1 & a_2\end{pmatrix}$$
for some $a_1, a_2 \in \F$ by the definition of $Y$.

Since $n \ge 4$, then  
\begin{multline*}
\det_2\Bigl(Y[1,3|)\Bigr) - \det_2\Bigl(Y[1,n|)\Bigr) + \det_2\Bigl(Y[3,n|)\Bigr)\\
= \begin{vmatrix} 1 - a_1 & -a_2\\ -a_1 & -a_2 \end{vmatrix} - \begin{vmatrix} 1 - a_1 & -a_2\\ -a_1 & -1-a_2 \end{vmatrix} + \begin{vmatrix} - a_1 & -a_2\\ -a_1 & -1-a_2 \end{vmatrix}\\
= \begin{vmatrix} 1 - a_1 & 0\\ -a_1 & 1 \end{vmatrix} +\begin{vmatrix} - a_1 & -a_2\\ -a_1 & -1-a_2 \end{vmatrix}\\
= (1 - a_1) + (-a_1)((-1 - a_2) - (-a_2)) = 1,
\end{multline*}
which means that one of the three determinants in the equality above is nonzero and consequently 
\[
\rk\left(\mathbf{T}'(X') - \phi(X')\right) = 2 > 1 = \rk\left(X'\right)
\]
which contradicts~\eqref{lem:CounterEXK2NEven:eq1}. Thus, $\mathbf{T}'$ does not have the desired form.
\end{proof}


\begin{remark}
Note that if $k = 2$, then $\det_{n\,2}$ is a non-degenerate quadratic form. This implies in particular that every linear map preserving $\det_{n\,2}$ could be expressed as a composition of reflections (see~\cite[\S 4. Witt's Theorem]{Milnor2013}). One may see that the quadratic space $\left(\M_{n\,2}(\F), \det_{n\,2}\right)$ is in fact a \emph{split} (or \emph{metabolic}) quadratic subspace, i.e. has a totally isotropic subspace of dimension $n$ spanned by matrices $E_{1\,1}, \ldots, E_{n\,1}$ (see~\cite[\S 6. Split Inner Product Spaces]{Milnor2013}). 
\end{remark}

\section*{Declaration of competing interest}

The authors declare that they have no known competing financial interests or personal relationships that could have appeared to influence the work reported in this paper.

\section*{Acknowledgements}

The research of the second author was supported by the scholarship of the Center for Absorption in Science, the Ministry for Absorption of Aliyah, the State of Israel.

\section*{Data availability}

No data was used for the research described in the article.

\bibliographystyle{plainnat}
\bibliography{cullisgeneral.bib}


%

\end{document}